  \let\fiverm\fivrm
\def\@picture(#1,#2)(#3,#4){%
  \@picht #2\unitlength
  \setbox\@picbox\hbox to #1\unitlength\bgroup 
  \let\endpicture=\!latexendpicture
  \let\frame=\!latexframe
  \let\linethickness=\!latexlinethickness
  \let\multiput=\!latexmultiput
  \let\put=\!latexput
  \hskip -#3\unitlength \lower #4\unitlength \hbox\bgroup}
\def\PiC{P\kern-.12em\lower.5ex\hbox{I}\kern-.075emC}
\def\PiCTeX{\PiC\kern-.11em\TeX}
\def\!ifnextchar#1#2#3{%
  \let\!testchar=#1%
  \def\!first{#2}%
  \def\!second{#3}%
  \futurelet\!nextchar\!testnext}
\def\!testnext{%
  \ifx \!nextchar \!spacetoken 
    \let\!next=\!skipspacetestagain
  \else
    \ifx \!nextchar \!testchar
      \let\!next=\!first
    \else 
      \let\!next=\!second 
    \fi 
  \fi
  \!next}
\def\\{\!skipspacetestagain} 
\def\\ {\futurelet\!nextchar\!testnext} 
\def\\{\let\!spacetoken= } \\  
\def\!tfor#1:=#2\do#3{%
  \edef\!fortemp{#2}%
  \ifx\!fortemp\!empty 
    \else
    \!tforloop#2\!nil\!nil\!!#1{#3}%
  \fi}
\def\!tforloop#1#2\!!#3#4{%
  \def#3{#1}%
  \ifx #3\!nnil
    \let\!nextwhile=\!fornoop
  \else
    #4\relax
    \let\!nextwhile=\!tforloop
  \fi 
  \!nextwhile#2\!!#3{#4}}
\def\!etfor#1:=#2\do#3{%
  \def\!!tfor{\!tfor#1:=}%
  \edef\!!!tfor{#2}%
  \expandafter\!!tfor\!!!tfor\do{#3}}
\def\!cfor#1:=#2\do#3{%
  \edef\!fortemp{#2}%
  \ifx\!fortemp\!empty 
  \else
    \!cforloop#2,\!nil,\!nil\!!#1{#3}%
  \fi}
\def\!cforloop#1,#2\!!#3#4{%
  \def#3{#1}%
  \ifx #3\!nnil
    \let\!nextwhile=\!fornoop 
  \else
    #4\relax
    \let\!nextwhile=\!cforloop
  \fi
  \!nextwhile#2\!!#3{#4}}
\def\!ecfor#1:=#2\do#3{%
  \def\!!cfor{\!cfor#1:=}%
  \edef\!!!cfor{#2}%
  \expandafter\!!cfor\!!!cfor\do{#3}}
\def\!empty{}
\def\!nnil{\!nil}
\def\!fornoop#1\!!#2#3{}
\def\!ifempty#1#2#3{%
  \edef\!emptyarg{#1}%
  \ifx\!emptyarg\!empty
    #2%
  \else
    #3%
  \fi}
\def\!getnext#1\from#2{%
  \expandafter\!gnext#2\!#1#2}%
\def\!gnext\\#1#2\!#3#4{%
  \def#3{#1}%
  \def#4{#2\\{#1}}%
  \ignorespaces}
\def\!getnextvalueof#1\from#2{%
  \expandafter\!gnextv#2\!#1#2}%
\def\!gnextv\\#1#2\!#3#4{%
  #3=#1%
  \def#4{#2\\{#1}}%
  \ignorespaces}
\def\!copylist#1\to#2{%
  \expandafter\!!copylist#1\!#2}
\def\!!copylist#1\!#2{%
  \def#2{#1}\ignorespaces}
\def\!wlet#1=#2{%
  \let#1=#2 
  \wlog{\string#1=\string#2}}
\def\!listaddon#1#2{%
  \expandafter\!!listaddon#2\!{#1}#2}
\def\!!listaddon#1\!#2#3{%
  \def#3{#1\\#2}}
\def\!rightappend#1\withCS#2\to#3{\expandafter\!!rightappend#3\!#2{#1}#3}
\def\!!rightappend#1\!#2#3#4{\def#4{#1#2{#3}}}
\def\!leftappend#1\withCS#2\to#3{\expandafter\!!leftappend#3\!#2{#1}#3}
\def\!!leftappend#1\!#2#3#4{\def#4{#2{#3}#1}}
\def\!lop#1\to#2{\expandafter\!!lop#1\!#1#2}
\def\!!lop\\#1#2\!#3#4{\def#4{#1}\def#3{#2}}
\def\!loop#1\repeat{\def\!body{#1}\!iterate}
\def\!iterate{\!body\let\!next=\!iterate\else\let\!next=\relax\fi\!next}
\def\!!loop#1\repeat{\def\!!body{#1}\!!iterate}
\def\!!iterate{\!!body\let\!!next=\!!iterate\else\let\!!next=\relax\fi\!!next}
\def\!removept#1#2{\edef#2{\expandafter\!!removePT\the#1}}
{\catcode`p=12 \catcode`t=12 \gdef\!!removePT#1pt{#1}}
\def\placevalueinpts of <#1> in #2 {%
  \!removept{#1}{#2}}
\def\!mlap#1{\hbox to 0pt{\hss#1\hss}}
\def\!vmlap#1{\vbox to 0pt{\vss#1\vss}}
\def\!not#1{%
  #1\relax
    \!switchfalse
  \else
    \!switchtrue
  \fi
  \if!switch
  \ignorespaces}
\def\wlog#1{}    
\newdimen\headingtoplotskip     
\newdimen\linethickness         
\newdimen\longticklength        
\newdimen\plotsymbolspacing     
\newdimen\shortticklength       
\newdimen\stackleading          
\newdimen\tickstovaluesleading  
\newdimen\totalarclength        
\newdimen\valuestolabelleading  
\newbox\!boxA                   
\newbox\!boxB                   
\newbox\!picbox                 
\newbox\!plotsymbol             
\newbox\!putobject              
\newbox\!shadesymbol            
\newdimen\!Xleft                
\newdimen\!Xright               
\newdimen\!Xsave                
\newdimen\!Ybot                 
\newdimen\!Ysave                
\newdimen\!Ytop                 
\newdimen\!angle                
\newdimen\!arclength            
\newdimen\!areabloc             
\newdimen\!arealloc             
\newdimen\!arearloc             
\newdimen\!areatloc             
\newdimen\!bshrinkage           
\newdimen\!checkbot             
\newdimen\!checkleft            
\newdimen\!checkright           
\newdimen\!checktop             
\newdimen\!dimenA               
\newdimen\!dimenB               
\newdimen\!dimenC               
\newdimen\!dimenD               
\newdimen\!dimenE               
\newdimen\!dimenF               
\newdimen\!dimenG               
\newdimen\!dimenH               
\newdimen\!dimenI               
\newdimen\!distacross           
\newdimen\!downlength           
\newdimen\!dp                   
\newdimen\!dshade               
\newdimen\!dxpos                
\newdimen\!dxprime              
\newdimen\!dypos                
\newdimen\!dyprime              
\newdimen\!ht                   
\newdimen\!leaderlength         
\newdimen\!lshrinkage           
\newdimen\!midarclength         
\newdimen\!offset               
\newdimen\!plotheadingoffset    
\newdimen\!plotsymbolxshift     
\newdimen\!plotsymbolyshift     
\newdimen\!plotxorigin          
\newdimen\!plotyorigin          
\newdimen\!rootten              
\newdimen\!rshrinkage           
\newdimen\!shadesymbolxshift    
\newdimen\!shadesymbolyshift    
\newdimen\!tenAa                
\newdimen\!tenAc                
\newdimen\!tenAe                
\newdimen\!tshrinkage           
\newdimen\!uplength             
\newdimen\!wd                   
\newdimen\!wmax                 
\newdimen\!wmin                 
\newdimen\!xB                   
\newdimen\!xC                   
\newdimen\!xE                   
\newdimen\!xM                   
\newdimen\!xS                   
\newdimen\!xaxislength          
\newdimen\!xdiff                
\newdimen\!xleft                
\newdimen\!xloc                 
\newdimen\!xorigin              
\newdimen\!xpivot               
\newdimen\!xpos                 
\newdimen\!xprime               
\newdimen\!xright               
\newdimen\!xshade               
\newdimen\!xshift               
\newdimen\!xtemp                
\newdimen\!xunit                
\newdimen\!xxE                  
\newdimen\!xxM                  
\newdimen\!xxS                  
\newdimen\!xxloc                
\newdimen\!yB                   
\newdimen\!yC                   
\newdimen\!yE                   
\newdimen\!yM                   
\newdimen\!yS                   
\newdimen\!yaxislength          
\newdimen\!ybot                 
\newdimen\!ydiff                
\newdimen\!yloc                 
\newdimen\!yorigin              
\newdimen\!ypivot               
\newdimen\!ypos                 
\newdimen\!yprime               
\newdimen\!yshade               
\newdimen\!yshift               
\newdimen\!ytemp                
\newdimen\!ytop                 
\newdimen\!yunit                
\newdimen\!yyE                  
\newdimen\!yyM                  
\newdimen\!yyS                  
\newdimen\!yyloc                
\newdimen\!zpt                  
\newif\if!axisvisible           
\newif\if!gridlinestoo          
\newif\if!keepPO                
\newif\if!placeaxislabel        
\newif\if!switch                
\newif\if!xswitch               
\newtoks\!axisLaBeL             
\newtoks\!keywordtoks           
\newwrite\!replotfile           
\def\!cosrotationangle{1}      
\def\!sinrotationangle{0}      
\def\!xpivotcoord{0}           
\def\!xref{0}                  
\def\!xshadesave{0}            
\def\!ypivotcoord{0}           
\def\!yref{0}                  
\def\!yshadesave{0}            
\def\!zero{0}                  
\let\wlog=\!!!wlog
\def\normalgraphs{%
  \longticklength=.4\baselineskip
  \shortticklength=.25\baselineskip
  \tickstovaluesleading=.25\baselineskip
  \valuestolabelleading=.8\baselineskip
  \linethickness=.4pt
  \stackleading=.17\baselineskip
  \headingtoplotskip=1.5\baselineskip
  \visibleaxes
  \ticksout
  \nogridlines
  \unloggedticks}
\def\setplotarea x from #1 to #2, y from #3 to #4 {%
  \!arealloc=\!M{#1}\!xunit \advance \!arealloc -\!xorigin
  \!areabloc=\!M{#3}\!yunit \advance \!areabloc -\!yorigin
  \!arearloc=\!M{#2}\!xunit \advance \!arearloc -\!xorigin
  \!areatloc=\!M{#4}\!yunit \advance \!areatloc -\!yorigin
  \!initinboundscheck
  \!xaxislength=\!arearloc  \advance\!xaxislength -\!arealloc
  \!yaxislength=\!areatloc  \advance\!yaxislength -\!areabloc
  \!plotheadingoffset=\!zpt
  \!dimenput {{\setbox0=\hbox{}\wd0=\!xaxislength\ht0=\!yaxislength\box0}}
     [bl] (\!arealloc,\!areabloc)}
\def\visibleaxes{%
  \def\!axisvisibility{\!axisvisibletrue}}
\def\!fixkeyword#1{%
  \errhelp=\!keywordhelp
  \errmessage{Unrecognized keyword `#1': \the\!keywordtoks{NEW KEYWORD}'}}
\def\fixkeyword#1{%
  \!nextkeyword#1 }
\def\axis {%
  \def\!nextkeyword##1 {%
    \expandafter\ifx\csname !axis##1\endcsname \relax
      \def\!next{\!fixkeyword{##1}}%
    \else
      \def\!next{\csname !axis##1\endcsname}%
    \fi
    \!next}%
  \!offset=\!zpt
  \!axisvisibility
  \!placeaxislabelfalse
  \!nextkeyword}
\def\!axisbottom{%
  \!axisylevel=\!areabloc
  \def\!tickxsign{0}%
  \def\!tickysign{-}%
  \def\!axissetup{\!axisxsetup}%
  \def\!axislabeltbrl{t}%
  \!nextkeyword}
\def\!axistop{%
  \!axisylevel=\!areatloc
  \def\!tickxsign{0}%
  \def\!tickysign{+}%
  \def\!axissetup{\!axisxsetup}%
  \def\!axislabeltbrl{b}%
  \!nextkeyword}
\def\!axisleft{%
  \!axisxlevel=\!arealloc
  \def\!tickxsign{-}%
  \def\!tickysign{0}%
  \def\!axissetup{\!axisysetup}%
  \def\!axislabeltbrl{r}%
  \!nextkeyword}
\def\!axisright{%
  \!axisxlevel=\!arearloc
  \def\!tickxsign{+}%
  \def\!tickysign{0}%
  \def\!axissetup{\!axisysetup}%
  \def\!axislabeltbrl{l}%
  \!nextkeyword}
\def\!axisshiftedto#1=#2 {%
  \if 0\!tickxsign
    \!axisylevel=\!M{#2}\!yunit
    \advance\!axisylevel -\!yorigin
  \else
    \!axisxlevel=\!M{#2}\!xunit
    \advance\!axisxlevel -\!xorigin
  \fi
  \!nextkeyword}
\def\!axisvisible{%
  \!axisvisibletrue  
  \!nextkeyword}
\def\!axisinvisible{%
  \!axisvisiblefalse
  \!nextkeyword}
\def\!axislabel#1 {%
  \!axisLaBeL={#1}%
  \!placeaxislabeltrue
  \!nextkeyword}
\def\csname !axis/\endcsname{%
  \!axissetup 
  \if!placeaxislabel
    \!placeaxislabel
  \fi
  \if +\!tickysign 
    \!dimenA=\!axisylevel
    \advance\!dimenA \!offset 
    \advance\!dimenA -\!areatloc 
    \ifdim \!dimenA>\!plotheadingoffset
      \!plotheadingoffset=\!dimenA 
    \fi
  \fi}
\def\grid #1 #2 {%
  \!countA=#1\advance\!countA 1
  \axis bottom invisible ticks length <\!zpt> andacross quantity {\!countA} /
  \!countA=#2\advance\!countA 1
  \axis left   invisible ticks length <\!zpt> andacross quantity {\!countA} / }
\def\plotheading#1 {%
  \advance\!plotheadingoffset \headingtoplotskip
  \!dimenput {#1} [B] <.5\!xaxislength,\!plotheadingoffset>
    (\!arealloc,\!areatloc)}
\def\!axisxsetup{%
  \!axisxlevel=\!arealloc
  \!axisstart=\!arealloc
  \!axisend=\!arearloc
  \!axisLength=\!xaxislength
  \!!origin=\!xorigin
  \!!unit=\!xunit
  \!xswitchtrue
  \if!axisvisible 
    \!makeaxis
  \fi}
\def\!axisysetup{%
  \!axisylevel=\!areabloc
  \!axisstart=\!areabloc
  \!axisend=\!areatloc
  \!axisLength=\!yaxislength
  \!!origin=\!yorigin
  \!!unit=\!yunit
  \!xswitchfalse
  \if!axisvisible
    \!makeaxis
  \fi}
\def\!makeaxis{%
  \setbox\!boxA=\hbox{
    \beginpicture
      \!setdimenmode
      \setcoordinatesystem point at {\!zpt} {\!zpt}   
      \putrule from {\!zpt} {\!zpt} to
        {\!tickysign\!tickysign\!axisLength} 
        {\!tickxsign\!tickxsign\!axisLength}
    \endpicturesave <\!Xsave,\!Ysave>}%
    \wd\!boxA=\!zpt
    \!placetick\!axisstart}
\def\!placeaxislabel{%
  \advance\!offset \valuestolabelleading
  \if!xswitch
    \!dimenput {\the\!axisLaBeL} [\!axislabeltbrl]
      <.5\!axisLength,\!tickysign\!offset> (\!axisxlevel,\!axisylevel)
    \advance\!offset \!dp  
    \advance\!offset \!ht  
  \else
    \!dimenput {\the\!axisLaBeL} [\!axislabeltbrl]
      <\!tickxsign\!offset,.5\!axisLength> (\!axisxlevel,\!axisylevel)
  \fi
  \!axisLaBeL={}}
\def\arrow <#1> [#2,#3]{%
  \!ifnextchar<{\!arrow{#1}{#2}{#3}}{\!arrow{#1}{#2}{#3}<\!zpt,\!zpt> }}
\def\!arrow#1#2#3<#4,#5> from #6 #7 to #8 #9 {%
%
  \!xloc=\!M{#8}\!xunit   
  \!yloc=\!M{#9}\!yunit
  \!dxpos=\!xloc  \!dimenA=\!M{#6}\!xunit  \advance \!dxpos -\!dimenA
  \!dypos=\!yloc  \!dimenA=\!M{#7}\!yunit  \advance \!dypos -\!dimenA
  \let\!MAH=\!M
  \!setdimenmode
  \!xshift=#4\relax  \!yshift=#5\relax
  \!reverserotateonly\!xshift\!yshift
  \advance\!xshift\!xloc  \advance\!yshift\!yloc
%
  \!xS=-\!dxpos  \advance\!xS\!xshift
  \!yS=-\!dypos  \advance\!yS\!yshift
  \!start (\!xS,\!yS)
  \!ljoin (\!xshift,\!yshift)
%
  \!Pythag\!dxpos\!dypos\!arclength
  \!divide\!dxpos\!arclength\!dxpos  
  \!dxpos=32\!dxpos  \!removept\!dxpos\!!cos
  \!divide\!dypos\!arclength\!dypos  
  \!dypos=32\!dypos  \!removept\!dypos\!!sin
%
  \!halfhead{#1}{#2}{#3}
  \!halfhead{#1}{-#2}{-#3}
  \let\!M=\!MAH
  \ignorespaces}
  \def\!halfhead#1#2#3{%
    \!dimenC=-#1%
    \divide \!dimenC 2 
    \!dimenD=#2\!dimenC
    \!rotate(\!dimenC,\!dimenD)by(\!!cos,\!!sin)to(\!xM,\!yM)
    \!dimenC=-#1
    \!dimenD=#3\!dimenC
    \!dimenD=.5\!dimenD
    \!rotate(\!dimenC,\!dimenD)by(\!!cos,\!!sin)to(\!xE,\!yE)
    \!start (\!xshift,\!yshift)
    \advance\!xM\!xshift  \advance\!yM\!yshift
    \advance\!xE\!xshift  \advance\!yE\!yshift
    \!qjoin (\!xM,\!yM) (\!xE,\!yE) 
    \ignorespaces}
\def\betweenarrows #1#2 from #3 #4 to #5 #6 {%
  \!xloc=\!M{#3}\!xunit  \!xxloc=\!M{#5}\!xunit%
  \!yloc=\!M{#4}\!yunit  \!yyloc=\!M{#6}\!yunit%
  \!dxpos=\!xxloc  \advance\!dxpos by -\!xloc
  \!dypos=\!yyloc  \advance\!dypos by -\!yloc
  \advance\!xloc .5\!dxpos
  \advance\!yloc .5\!dypos
  \let\!MBA=\!M
  \!setdimenmode
  \ifdim\!dypos=\!zpt
    \ifdim\!dxpos<\!zpt \!dxpos=-\!dxpos \fi
    \put {\!lrarrows{\!dxpos}{#1}}#2{} at {\!xloc} {\!yloc}
  \else
    \ifdim\!dxpos=\!zpt
      \ifdim\!dypos<\!zpt \!dypos=-\!zpt \fi
      \put {\!udarrows{\!dypos}{#1}}#2{} at {\!xloc} {\!yloc}
    \fi
  \fi
  \let\!M=\!MBA
  \ignorespaces}
\def\!lrarrows#1#2{
  {\setbox\!boxA=\hbox{$\mkern-2mu\mathord-\mkern-2mu$}%
   \setbox\!boxB=\hbox{$\leftarrow$}\!dimenE=\ht\!boxB
   \setbox\!boxB=\hbox{}\ht\!boxB=2\!dimenE
   \hbox to #1{$\mathord\leftarrow\mkern-6mu
     \cleaders\copy\!boxA\hfil
     \mkern-6mu\mathord-$%
     \kern.4em $\vcenter{\box\!boxB}$$\vcenter{\hbox{#2}}$\kern.4em
     $\mathord-\mkern-6mu
     \cleaders\copy\!boxA\hfil
     \mkern-6mu\mathord\rightarrow$}}}
\def\!udarrows#1#2{
  {\setbox\!boxB=\hbox{#2}%
   \setbox\!boxA=\hbox to \wd\!boxB{\hss$\vert$\hss}%
   \!dimenE=\ht\!boxA \advance\!dimenE \dp\!boxA \divide\!dimenE 2
   \vbox to #1{\offinterlineskip
      \vskip .05556\!dimenE
      \hbox to \wd\!boxB{\hss$\mkern.4mu\uparrow$\hss}\vskip-\!dimenE
      \cleaders\copy\!boxA\vfil
      \vskip-\!dimenE\copy\!boxA
      \vskip\!dimenE\copy\!boxB\vskip.4em
      \copy\!boxA\vskip-\!dimenE
      \cleaders\copy\!boxA\vfil
      \vskip-\!dimenE \hbox to \wd\!boxB{\hss$\mkern.4mu\downarrow$\hss}
      \vskip .05556\!dimenE}}}
\def\putbar#1breadth <#2> from #3 #4 to #5 #6 {%
  \!xloc=\!M{#3}\!xunit  \!xxloc=\!M{#5}\!xunit%
  \!yloc=\!M{#4}\!yunit  \!yyloc=\!M{#6}\!yunit%
  \!dypos=\!yyloc  \advance\!dypos by -\!yloc
  \!dimenI=#2  
  \ifdim \!dimenI=\!zpt 
    \putrule#1from {#3} {#4} to {#5} {#6} 
  \else 
    \let\!MBar=\!M
    \!setdimenmode 
    \divide\!dimenI 2
    \ifdim \!dypos=\!zpt             
      \advance \!yloc -\!dimenI 
      \advance \!yyloc \!dimenI
    \else
      \advance \!xloc -\!dimenI 
      \advance \!xxloc \!dimenI
    \fi
    \putrectangle#1corners at {\!xloc} {\!yloc} and {\!xxloc} {\!yyloc}
    \let\!M=\!MBar 
  \fi
  \ignorespaces}
\def\setbars#1breadth <#2> baseline at #3 = #4 {%
  \edef\!barshift{#1}%
  \edef\!barbreadth{#2}%
  \edef\!barorientation{#3}%
  \edef\!barbaseline{#4}%
  \def\!bardobaselabel{\!bardoendlabel}%
  \def\!bardoendlabel{\!barfinish}%
  \let\!drawcurve=\!barcurve
  \!setbars}
\def\!setbars{%
  \futurelet\!nextchar\!!setbars}
\def\!!setbars{%
  \if b\!nextchar
    \def\!!!setbars{\!setbarsbget}%
  \else 
    \if e\!nextchar
      \def\!!!setbars{\!setbarseget}%
    \else
      \def\!!!setbars{\relax}%
    \fi
  \fi
  \!!!setbars}
\def\!setbarsbget baselabels (#1) {%
  \def\!barbaselabelorientation{#1}%
  \def\!bardobaselabel{\!!bardobaselabel}%
  \!setbars}
\def\!setbarseget endlabels (#1) {%
  \edef\!barendlabelorientation{#1}%
  \def\!bardoendlabel{\!!bardoendlabel}%
  \!setbars}
\def\!barcurve #1 #2 {%
  \if y\!barorientation
    \def\!basexarg{#1}%
    \def\!baseyarg{\!barbaseline}%
  \else
    \def\!basexarg{\!barbaseline}%
    \def\!baseyarg{#2}%
  \fi
  \expandafter\putbar\!barshift breadth <\!barbreadth> from {\!basexarg}
    {\!baseyarg} to {#1} {#2}
  \def\!endxarg{#1}%
  \def\!endyarg{#2}%
  \!bardobaselabel}
\def\!!bardobaselabel "#1" {%
  \put {#1}\!barbaselabelorientation{} at {\!basexarg} {\!baseyarg}
  \!bardoendlabel}
\def\!!bardoendlabel "#1" {%
  \put {#1}\!barendlabelorientation{} at {\!endxarg} {\!endyarg}
  \!barfinish}
\def\!barfinish{%
  \!ifnextchar/{\!finish}{\!barcurve}}
\def\putrectangle{%
  \!ifnextchar<{\!putrectangle}{\!putrectangle<\!zpt,\!zpt> }}
\def\!putrectangle<#1,#2> corners at #3 #4 and #5 #6 {%
%
  \!xone=\!M{#3}\!xunit  \!xtwo=\!M{#5}\!xunit%
  \!yone=\!M{#4}\!yunit  \!ytwo=\!M{#6}\!yunit%
  \ifdim \!xtwo<\!xone
    \!dimenI=\!xone  \!xone=\!xtwo  \!xtwo=\!dimenI
  \fi
  \ifdim \!ytwo<\!yone
    \!dimenI=\!yone  \!yone=\!ytwo  \!ytwo=\!dimenI
  \fi
  \!dimenI=#1\relax  \advance\!xone\!dimenI  \advance\!xtwo\!dimenI
  \!dimenI=#2\relax  \advance\!yone\!dimenI  \advance\!ytwo\!dimenI
  \let\!MRect=\!M
  \!setdimenmode
%
  \!shaderectangle
%
  \!dimenI=.5\linethickness
  \advance \!xone  -\!dimenI
  \advance \!xtwo   \!dimenI
  \putrule from {\!xone} {\!yone} to {\!xtwo} {\!yone} 
  \putrule from {\!xone} {\!ytwo} to {\!xtwo} {\!ytwo} 
%
  \advance \!xone   \!dimenI
  \advance \!xtwo  -\!dimenI%
  \advance \!yone  -\!dimenI
  \advance \!ytwo   \!dimenI
  \putrule from {\!xone} {\!yone} to {\!xone} {\!ytwo} 
  \putrule from {\!xtwo} {\!yone} to {\!xtwo} {\!ytwo} 
  \let\!M=\!MRect
  \ignorespaces}
\def\shaderectanglesoff{%
  \def\!shaderectangle{}%
  \ignorespaces}
\def\!!shaderectangle{%
  \!dimenA=\!xtwo  \advance \!dimenA -\!xone
  \!dimenB=\!ytwo  \advance \!dimenB -\!yone
  \ifdim \!dimenA<\!dimenB
    \!startvshade (\!xone,\!yone,\!ytwo)
    \!lshade      (\!xtwo,\!yone,\!ytwo)
  \else
    \!starthshade (\!yone,\!xone,\!xtwo)
    \!lshade      (\!ytwo,\!xone,\!xtwo)
  \fi
  \ignorespaces}
\def\frame{%
  \!ifnextchar<{\!frame}{\!frame<\!zpt> }}
\long\def\!frame<#1> #2{%
  \beginpicture
    \setcoordinatesystem units <1pt,1pt> point at 0 0 
    \put {#2} [Bl] at 0 0 
    \!dimenA=#1\relax
    \!dimenB=\!wd \advance \!dimenB \!dimenA
    \!dimenC=\!ht \advance \!dimenC \!dimenA
    \!dimenD=\!dp \advance \!dimenD \!dimenA
    \let\!MFr=\!M
    \!setdimenmode
    \putrectangle corners at {-\!dimenA} {-\!dimenD} and {\!dimenB} {\!dimenC}
    \!setcoordmode
    \let\!M=\!MFr
  \endpicture
  \ignorespaces}
\def\rectangle <#1> <#2> {%
  \setbox0=\hbox{}\wd0=#1\ht0=#2\frame {\box0}}
\def\plot{%
  \!ifnextchar"{\!plotfromfile}{\!drawcurve}}
\def\!plotfromfile"#1"{%
  \expandafter\!drawcurve \input #1 /}
\def\setquadratic{%
  \let\!drawcurve=\!qcurve
  \let\!!Shade=\!!qShade
  \let\!!!Shade=\!!!qShade}
\def\setlinear{%
  \let\!drawcurve=\!lcurve
  \let\!!Shade=\!!lShade
  \let\!!!Shade=\!!!lShade}
\def\sethistograms{%
  \let\!drawcurve=\!hcurve}
\def\!qcurve #1 #2 {%
  \!start (#1,#2)
  \!Qjoin}
\def\!Qjoin#1 #2 #3 #4 {%
  \!qjoin (#1,#2) (#3,#4)             
  \!ifnextchar/{\!finish}{\!Qjoin}}
\def\!lcurve #1 #2 {%
  \!start (#1,#2)
  \!Ljoin}
\def\!Ljoin#1 #2 {%
  \!ljoin (#1,#2)                    
  \!ifnextchar/{\!finish}{\!Ljoin}}
\def\!finish/{\ignorespaces}
\def\!hcurve #1 #2 {%
  \edef\!hxS{#1}%
  \edef\!hyS{#2}%
  \!hjoin}
\def\!hjoin#1 #2 {%
  \putrectangle corners at {\!hxS} {\!hyS} and {#1} {#2}
  \edef\!hxS{#1}%
  \!ifnextchar/{\!finish}{\!hjoin}}
\def\vshade #1 #2 #3 {%
  \!startvshade (#1,#2,#3)
  \!Shadewhat}
\def\hshade #1 #2 #3 {%
  \!starthshade (#1,#2,#3)
  \!Shadewhat}
\def\!Shadewhat{%
  \futurelet\!nextchar\!Shade}
\def\!Shade{%
  \if <\!nextchar
    \def\!nextShade{\!!Shade}%
  \else
    \if /\!nextchar
      \def\!nextShade{\!finish}%
    \else
      \def\!nextShade{\!!!Shade}%
    \fi
  \fi
  \!nextShade}
\def\!!lShade<#1> #2 #3 #4 {%
  \!lshade <#1> (#2,#3,#4)                 
  \!Shadewhat}
\def\!!!lShade#1 #2 #3 {%
  \!lshade (#1,#2,#3)
  \!Shadewhat} 
\def\!!qShade<#1> #2 #3 #4 #5 #6 #7 {%
  \!qshade <#1> (#2,#3,#4) (#5,#6,#7)      
  \!Shadewhat}
\def\!!!qShade#1 #2 #3 #4 #5 #6 {%
  \!qshade (#1,#2,#3) (#4,#5,#6)
  \!Shadewhat} 
\def\setdashpattern <#1>{%
  \def\!Flist{}\def\!Blist{}\def\!UDlist{}%
  \!countA=0
  \!ecfor\!item:=#1\do{%
    \!dimenA=\!item\relax
    \expandafter\!rightappend\the\!dimenA\withCS{\\}\to\!UDlist%
    \advance\!countA  1
    \ifodd\!countA
      \expandafter\!rightappend\the\!dimenA\withCS{\!Rule}\to\!Flist%
      \expandafter\!leftappend\the\!dimenA\withCS{\!Rule}\to\!Blist%
    \else 
      \expandafter\!rightappend\the\!dimenA\withCS{\!Skip}\to\!Flist%
      \expandafter\!leftappend\the\!dimenA\withCS{\!Skip}\to\!Blist%
    \fi}%
  \!leaderlength=\!zpt
  \def\!Rule##1{\advance\!leaderlength  ##1}%
  \def\!Skip##1{\advance\!leaderlength  ##1}%
  \!Flist%
  \ifdim\!leaderlength>\!zpt 
  \else
    \def\!Flist{\!Skip{24in}}\def\!Blist{\!Skip{24in}}\ignorespaces
    \def\!UDlist{\\{\!zpt}\\{24in}}\ignorespaces
    \!leaderlength=24in
  \fi
  \!dashingon}
\def\!dashingon{%
  \def\!advancedashing{\!!advancedashing}%
  \def\!drawlinearsegment{\!lineardashed}%
  \def\!puthline{\!putdashedhline}%
  \def\!putvline{\!putdashedvline}%
  \ignorespaces}%
\def\!dashingoff{%
  \def\!advancedashing{\relax}%
  \def\!drawlinearsegment{\!linearsolid}%
  \def\!puthline{\!putsolidhline}%
  \def\!putvline{\!putsolidvline}%
  \ignorespaces}
\def\setdots{%
  \!ifnextchar<{\!setdots}{\!setdots<5pt>}}
\def\!setdots<#1>{%
  \!dimenB=#1\advance\!dimenB -\plotsymbolspacing
  \ifdim\!dimenB<\!zpt
    \!dimenB=\!zpt
  \fi
\setdashpattern <\plotsymbolspacing,\!dimenB>}
\def\setdotsnear <#1> for <#2>{%
  \!dimenB=#2\relax  \advance\!dimenB -.05pt  
  \!dimenC=#1\relax  \!countA=\!dimenC 
  \!dimenD=\!dimenB  \advance\!dimenD .5\!dimenC  \!countB=\!dimenD
  \divide \!countB  \!countA
  \ifnum 1>\!countB 
    \!countB=1
  \fi
  \divide\!dimenB  \!countB
  \setdots <\!dimenB>}
\def\setdashes{%
  \!ifnextchar<{\!setdashes}{\!setdashes<5pt>}}
\def\!setdashes<#1>{\setdashpattern <#1,#1>}
\def\setdashesnear <#1> for <#2>{%
  \!dimenB=#2\relax  
  \!dimenC=#1\relax  \!countA=\!dimenC 
  \!dimenD=\!dimenB  \advance\!dimenD .5\!dimenC  \!countB=\!dimenD
  \divide \!countB  \!countA
  \ifodd \!countB 
  \else 
    \advance \!countB  1
  \fi
  \divide\!dimenB  \!countB
  \setdashes <\!dimenB>}
\def\setsolid{%
  \def\!Flist{\!Rule{24in}}\def\!Blist{\!Rule{24in}}%
  \def\!UDlist{\\{24in}\\{\!zpt}}%
  \!dashingoff}  
\def\!divide#1#2#3{%
  \!dimenB=#1
  \!dimenC=#2
  \!dimenD=\!dimenB
  \divide \!dimenD \!dimenC
  \!dimenA=\!dimenD
  \multiply\!dimenD \!dimenC
  \advance\!dimenB -\!dimenD
  \!dimenD=\!dimenC
    \ifdim\!dimenD<\!zpt \!dimenD=-\!dimenD 
  \fi
  \ifdim\!dimenD<64pt
    \!divstep[\!tfs]\!divstep[\!tfs]%
  \else 
    \!!divide
  \fi
  #3=\!dimenA\ignorespaces}
\def\!!divide{%
  \ifdim\!dimenD<256pt
    \!divstep[64]\!divstep[32]\!divstep[32]%
  \else 
    \!divstep[8]\!divstep[8]\!divstep[8]\!divstep[8]\!divstep[8]%
    \!dimenA=2\!dimenA
  \fi}
\def\!divstep[#1]{
  \!dimenB=#1\!dimenB
  \!dimenD=\!dimenB
    \divide \!dimenD by \!dimenC
  \!dimenA=#1\!dimenA
    \advance\!dimenA by \!dimenD%
  \multiply\!dimenD by \!dimenC
    \advance\!dimenB by -\!dimenD}
\def\Divide <#1> by <#2> forming <#3> {%
  \!divide{#1}{#2}{#3}}
\def\ellipticalarc axes ratio #1:#2 #3 degrees from #4 #5 center at #6 #7 {%
  \!angle=#3pt\relax
  \ifdim\!angle>\!zpt 
    \def\!sign{}
  \else 
    \def\!sign{-}\!angle=-\!angle
  \fi
  \!xxloc=\!M{#6}\!xunit
  \!yyloc=\!M{#7}\!yunit     
  \!xxS=\!M{#4}\!xunit
  \!yyS=\!M{#5}\!yunit
  \advance\!xxS -\!xxloc
  \advance\!yyS -\!yyloc
  \!divide\!xxS{#1pt}\!xxS 
  \!divide\!yyS{#2pt}\!yyS 
  \let\!MC=\!M
  \!setdimenmode
  \!xS=#1\!xxS  \advance\!xS\!xxloc
  \!yS=#2\!yyS  \advance\!yS\!yyloc
  \!start (\!xS,\!yS)%
  \!loop\ifdim\!angle>14.9999pt
    \!rotate(\!xxS,\!yyS)by(\!cos,\!sign\!sin)to(\!xxM,\!yyM) 
    \!rotate(\!xxM,\!yyM)by(\!cos,\!sign\!sin)to(\!xxE,\!yyE)
    \!xM=#1\!xxM  \advance\!xM\!xxloc  \!yM=#2\!yyM  \advance\!yM\!yyloc
    \!xE=#1\!xxE  \advance\!xE\!xxloc  \!yE=#2\!yyE  \advance\!yE\!yyloc
    \!qjoin (\!xM,\!yM) (\!xE,\!yE)
    \!xxS=\!xxE  \!yyS=\!yyE 
    \advance \!angle -15pt
  \repeat
  \ifdim\!angle>\!zpt
    \!angle=100.53096\!angle
    \divide \!angle 360 
    \!sinandcos\!angle\!!sin\!!cos
    \!rotate(\!xxS,\!yyS)by(\!!cos,\!sign\!!sin)to(\!xxM,\!yyM) 
    \!rotate(\!xxM,\!yyM)by(\!!cos,\!sign\!!sin)to(\!xxE,\!yyE)
    \!xM=#1\!xxM  \advance\!xM\!xxloc  \!yM=#2\!yyM  \advance\!yM\!yyloc
    \!xE=#1\!xxE  \advance\!xE\!xxloc  \!yE=#2\!yyE  \advance\!yE\!yyloc
    \!qjoin (\!xM,\!yM) (\!xE,\!yE)
  \fi
  \let\!M=\!MC
  \ignorespaces}
\def\!rotate(#1,#2)by(#3,#4)to(#5,#6){%
  \!dimenA=#3#1\advance \!dimenA -#4#2
  \!dimenB=#3#2\advance \!dimenB  #4#1
  \divide \!dimenA 32  \divide \!dimenB 32 
  #5=\!dimenA  #6=\!dimenB
  \ignorespaces}
\def\!sin{4.17684}
\def\!cos{31.72624}
\def\!sinandcos#1#2#3{%
 \!dimenD=#1
 \!dimenA=\!dimenD
 \!dimenB=32pt
 \!removept\!dimenD\!value
 \!dimenC=\!dimenD
 \!dimenC=\!value\!dimenC \divide\!dimenC by 64 
 \advance\!dimenB by -\!dimenC
 \!dimenC=\!value\!dimenC \divide\!dimenC by 96 
 \advance\!dimenA by -\!dimenC
 \!dimenC=\!value\!dimenC \divide\!dimenC by 128 
 \advance\!dimenB by \!dimenC%
 \!removept\!dimenA#2
 \!removept\!dimenB#3
 \ignorespaces}
\def\putrule#1from #2 #3 to #4 #5 {%
  \!xloc=\!M{#2}\!xunit  \!xxloc=\!M{#4}\!xunit%
  \!yloc=\!M{#3}\!yunit  \!yyloc=\!M{#5}\!yunit%
  \!dxpos=\!xxloc  \advance\!dxpos by -\!xloc
  \!dypos=\!yyloc  \advance\!dypos by -\!yloc
  \ifdim\!dypos=\!zpt
    \def\!!Line{\!puthline{#1}}\ignorespaces
  \else
    \ifdim\!dxpos=\!zpt
      \def\!!Line{\!putvline{#1}}\ignorespaces
    \else 
       \def\!!Line{}
    \fi
  \fi
  \let\!ML=\!M
  \!setdimenmode
  \!!Line%
  \let\!M=\!ML
  \ignorespaces}
\def\!putsolidhline#1{%
  \ifdim\!dxpos>\!zpt 
    \put{\!hline\!dxpos}#1[l] at {\!xloc} {\!yloc}
  \else 
    \put{\!hline{-\!dxpos}}#1[l] at {\!xxloc} {\!yyloc}
  \fi
  \ignorespaces}
\def\!putsolidvline#1{%
  \ifdim\!dypos>\!zpt 
    \put{\!vline\!dypos}#1[b] at {\!xloc} {\!yloc}
  \else 
    \put{\!vline{-\!dypos}}#1[b] at {\!xxloc} {\!yyloc}
  \fi
  \ignorespaces}
\def\!hline#1{\hbox to #1{\leaders \hrule height\linethickness\hfill}}
\def\!vline#1{\vbox to #1{\leaders \vrule width\linethickness\vfill}}
\def\!putdashedhline#1{%
  \ifdim\!dxpos>\!zpt 
    \!DLsetup\!Flist\!dxpos
    \put{\hbox to \!totalleaderlength{\!hleaders}\!hpartialpattern\!Rtrunc}
      #1[l] at {\!xloc} {\!yloc} 
  \else 
    \!DLsetup\!Blist{-\!dxpos}
    \put{\!hpartialpattern\!Ltrunc\hbox to \!totalleaderlength{\!hleaders}}
      #1[r] at {\!xloc} {\!yloc} 
  \fi
  \ignorespaces}
\def\!putdashedvline#1{%
  \!dypos=-\!dypos
  \ifdim\!dypos>\!zpt 
    \!DLsetup\!Flist\!dypos 
    \put{\vbox{\vbox to \!totalleaderlength{\!vleaders}
      \!vpartialpattern\!Rtrunc}}#1[t] at {\!xloc} {\!yloc} 
  \else 
    \!DLsetup\!Blist{-\!dypos}
    \put{\vbox{\!vpartialpattern\!Ltrunc
      \vbox to \!totalleaderlength{\!vleaders}}}#1[b] at {\!xloc} {\!yloc} 
  \fi
  \ignorespaces}
\def\!DLsetup#1#2{
  \let\!RSlist=#1
  \!countB=#2
  \!countA=\!leaderlength
  \divide\!countB by \!countA
  \!totalleaderlength=\!countB\!leaderlength
  \!Rresiduallength=#2%
  \advance \!Rresiduallength by -\!totalleaderlength
  \!Lresiduallength=\!leaderlength
  \advance \!Lresiduallength by -\!Rresiduallength
  \ignorespaces}
\def\!hleaders{%
  \def\!Rule##1{\vrule height\linethickness width##1}%
  \def\!Skip##1{\hskip##1}%
  \leaders\hbox{\!RSlist}\hfill}
\def\!hpartialpattern#1{%
  \!dimenA=\!zpt \!dimenB=\!zpt 
  \def\!Rule##1{#1{##1}\vrule height\linethickness width\!dimenD}%
  \def\!Skip##1{#1{##1}\hskip\!dimenD}%
  \!RSlist}
\def\!vleaders{%
  \def\!Rule##1{\hrule width\linethickness height##1}%
  \def\!Skip##1{\vskip##1}%
  \leaders\vbox{\!RSlist}\vfill}
\def\!vpartialpattern#1{%
  \!dimenA=\!zpt \!dimenB=\!zpt 
  \def\!Rule##1{#1{##1}\hrule width\linethickness height\!dimenD}%
  \def\!Skip##1{#1{##1}\vskip\!dimenD}%
  \!RSlist}
\def\!Rtrunc#1{\!trunc{#1}>\!Rresiduallength}
\def\!Ltrunc#1{\!trunc{#1}<\!Lresiduallength}
\def\!trunc#1#2#3{%
  \!dimenA=\!dimenB         
  \advance\!dimenB by #1%
  \!dimenD=\!dimenB  \ifdim\!dimenD#2#3\!dimenD=#3\fi
  \!dimenC=\!dimenA  \ifdim\!dimenC#2#3\!dimenC=#3\fi
  \advance \!dimenD by -\!dimenC}
\def\!start (#1,#2){%
  \!plotxorigin=\!xorigin  \advance \!plotxorigin by \!plotsymbolxshift
  \!plotyorigin=\!yorigin  \advance \!plotyorigin by \!plotsymbolyshift
  \!xS=\!M{#1}\!xunit \!yS=\!M{#2}\!yunit
  \!rotateaboutpivot\!xS\!yS
  \!copylist\!UDlist\to\!!UDlist
  \!getnextvalueof\!downlength\from\!!UDlist
  \!distacross=\!zpt
  \!intervalno=0 
  \global\totalarclength=\!zpt
  \ignorespaces}
\def\!ljoin (#1,#2){%
  \advance\!intervalno by 1
  \!xE=\!M{#1}\!xunit \!yE=\!M{#2}\!yunit
  \!rotateaboutpivot\!xE\!yE
  \!xdiff=\!xE \advance \!xdiff by -\!xS
  \!ydiff=\!yE \advance \!ydiff by -\!yS
  \!Pythag\!xdiff\!ydiff\!arclength
  \global\advance \totalarclength by \!arclength%
  \!drawlinearsegment
  \!xS=\!xE \!yS=\!yE
  \ignorespaces}
\def\!linearsolid{%
  \!npoints=\!arclength
  \!countA=\plotsymbolspacing
  \divide\!npoints by \!countA
  \ifnum \!npoints<1 
    \!npoints=1 
  \fi
  \divide\!xdiff by \!npoints
  \divide\!ydiff by \!npoints
  \!xpos=\!xS \!ypos=\!yS
  \loop\ifnum\!npoints>-1
    \!plotifinbounds
    \advance \!xpos by \!xdiff
    \advance \!ypos by \!ydiff
    \advance \!npoints by -1
  \repeat
  \ignorespaces}
\def\!lineardashed{%
  \ifdim\!distacross>\!arclength
    \advance \!distacross by -\!arclength  
  \else
    \loop\ifdim\!distacross<\!arclength
      \!divide\!distacross\!arclength\!dimenA
      \!removept\!dimenA\!t
      \!xpos=\!t\!xdiff \advance \!xpos by \!xS
      \!ypos=\!t\!ydiff \advance \!ypos by \!yS
      \!plotifinbounds
      \advance\!distacross by \plotsymbolspacing
      \!advancedashing
    \repeat  
    \advance \!distacross by -\!arclength
  \fi
  \ignorespaces}
\def\!!advancedashing{%
  \advance\!downlength by -\plotsymbolspacing
  \ifdim \!downlength>\!zpt
  \else
    \advance\!distacross by \!downlength
    \!getnextvalueof\!uplength\from\!!UDlist
    \advance\!distacross by \!uplength
    \!getnextvalueof\!downlength\from\!!UDlist
  \fi}
\def\inboundscheckoff{%
  \def\!plotifinbounds{\!plot(\!xpos,\!ypos)}%
  \def\!initinboundscheck{\relax}\ignorespaces}
\def\!!plotifinbounds{%
  \ifdim \!xpos<\!checkleft
  \else
    \ifdim \!xpos>\!checkright
    \else
      \ifdim \!ypos<\!checkbot
      \else
         \ifdim \!ypos>\!checktop
         \else
           \!plot(\!xpos,\!ypos)
         \fi 
      \fi
    \fi
  \fi}
\def\!!initinboundscheck{%
  \!checkleft=\!arealloc     \advance\!checkleft by \!xorigin
  \!checkright=\!arearloc    \advance\!checkright by \!xorigin
  \!checkbot=\!areabloc      \advance\!checkbot by \!yorigin
  \!checktop=\!areatloc      \advance\!checktop by \!yorigin}
\def\!logten#1#2{%
  \expandafter\!!logten#1\!nil
  \!removept\!dimenF#2%
  \ignorespaces}
\def\!!logten#1#2\!nil{%
  \if -#1%
    \!dimenF=\!zpt
    \def\!next{\ignorespaces}%
  \else
    \if +#1%
      \def\!next{\!!logten#2\!nil}%
    \else
      \if .#1%
        \def\!next{\!!logten0.#2\!nil}%
      \else
        \def\!next{\!!!logten#1#2..\!nil}%
      \fi
    \fi
  \fi
  \!next}
\def\!!!logten#1#2.#3.#4\!nil{%
  \!dimenF=1pt 
  \if 0#1%
    \!!logshift#3pt 
  \else 
    \!logshift#2/
    \!dimenE=#1.#2#3pt 
  \fi 
  \ifdim \!dimenE<\!rootten
    \multiply \!dimenE 10 
    \advance  \!dimenF -1pt
  \fi
  \!dimenG=\!dimenE
    \advance\!dimenG 10pt
  \advance\!dimenE -10pt 
  \multiply\!dimenE 10 
  \!divide\!dimenE\!dimenG\!dimenE
  \!removept\!dimenE\!t
  \!dimenG=\!t\!dimenE
  \!removept\!dimenG\!tt
  \!dimenH=\!tt\!tenAe
    \divide\!dimenH 100
  \advance\!dimenH \!tenAc
  \!dimenH=\!tt\!dimenH
    \divide\!dimenH 100   
  \advance\!dimenH \!tenAa
  \!dimenH=\!t\!dimenH
    \divide\!dimenH 100 
  \advance\!dimenF \!dimenH}
\def\!logshift#1{%
  \if #1/%
    \def\!next{\ignorespaces}%
  \else
    \advance\!dimenF 1pt 
    \def\!next{\!logshift}%
  \fi 
  \!next}
 \def\!!logshift#1{%
   \advance\!dimenF -1pt
   \if 0#1%
     \def\!next{\!!logshift}%
   \else
     \if p#1%
       \!dimenF=1pt
       \def\!next{\!dimenE=1p}%
     \else
       \def\!next{\!dimenE=#1.}%
     \fi
   \fi
   \!next}
\def\beginpicture{%
  \setbox\!picbox=\hbox\bgroup%
  \!xleft=\maxdimen  
  \!xright=-\maxdimen
  \!ybot=\maxdimen
  \!ytop=-\maxdimen}
\def\endpicture{%
  \ifdim\!xleft=\maxdimen
    \!xleft=\!zpt \!xright=\!zpt \!ybot=\!zpt \!ytop=\!zpt 
  \fi
  \global\!Xleft=\!xleft \global\!Xright=\!xright
  \global\!Ybot=\!ybot \global\!Ytop=\!ytop
  \egroup%
  \ht\!picbox=\!Ytop  \dp\!picbox=-\!Ybot
  \ifdim\!Ybot>\!zpt
  \else 
    \ifdim\!Ytop<\!zpt
      \!Ybot=\!Ytop
    \else
      \!Ybot=\!zpt
    \fi
  \fi
  \hbox{\kern-\!Xleft\lower\!Ybot\box\!picbox\kern\!Xright}}
\def\endpicturesave <#1,#2>{%
  \endpicture \global #1=\!Xleft \global #2=\!Ybot \ignorespaces}
\def\setcoordinatesystem{%
  \!ifnextchar{u}{\!getlengths }
    {\!getlengths units <\!xunit,\!yunit>}}
\def\!getlengths units <#1,#2>{%
  \!xunit=#1\relax
  \!yunit=#2\relax
  \!ifcoordmode 
    \let\!SCnext=\!SCccheckforRP
  \else
    \let\!SCnext=\!SCdcheckforRP
  \fi
  \!SCnext}
\def\!SCccheckforRP{%
  \!ifnextchar{p}{\!cgetreference }
    {\!cgetreference point at {\!xref} {\!yref} }}
\def\!cgetreference point at #1 #2 {%
  \edef\!xref{#1}\edef\!yref{#2}%
  \!xorigin=\!xref\!xunit  \!yorigin=\!yref\!yunit  
  \!initinboundscheck 
  \ignorespaces}
\def\!SCdcheckforRP{%
  \!ifnextchar{p}{\!dgetreference}%
    {\ignorespaces}}
\def\!dgetreference point at #1 #2 {%
  \!xorigin=#1\relax  \!yorigin=#2\relax
  \ignorespaces}
\long\def\put#1#2 at #3 #4 {%
  \!setputobject{#1}{#2}%
  \!xpos=\!M{#3}\!xunit  \!ypos=\!M{#4}\!yunit  
  \!rotateaboutpivot\!xpos\!ypos%
  \advance\!xpos -\!xorigin  \advance\!xpos -\!xshift
  \advance\!ypos -\!yorigin  \advance\!ypos -\!yshift
  \kern\!xpos\raise\!ypos\box\!putobject\kern-\!xpos%
  \!doaccounting\ignorespaces}
\long\def\multiput #1#2 at {%
  \!setputobject{#1}{#2}%
  \!ifnextchar"{\!putfromfile}{\!multiput}}
\def\!putfromfile"#1"{%
  \expandafter\!multiput \input #1 /}
\def\!multiput{%
  \futurelet\!nextchar\!!multiput}
\def\!!multiput{%
  \if *\!nextchar
    \def\!nextput{\!alsoby}%
  \else
    \if /\!nextchar
      \def\!nextput{\!finishmultiput}%
    \else
      \def\!nextput{\!alsoat}%
    \fi
  \fi
  \!nextput}
\def\!finishmultiput/{%
  \setbox\!putobject=\hbox{}%
  \ignorespaces}
\def\!alsoat#1 #2 {%
  \!xpos=\!M{#1}\!xunit  \!ypos=\!M{#2}\!yunit  
  \!rotateaboutpivot\!xpos\!ypos%
  \advance\!xpos -\!xorigin  \advance\!xpos -\!xshift
  \advance\!ypos -\!yorigin  \advance\!ypos -\!yshift
  \kern\!xpos\raise\!ypos\copy\!putobject\kern-\!xpos%
  \!doaccounting
  \!multiput}
\def\!alsoby*#1 #2 #3 {%
  \!dxpos=\!M{#2}\!xunit \!dypos=\!M{#3}\!yunit 
  \!rotateonly\!dxpos\!dypos
  \!ntemp=#1%
  \!!loop\ifnum\!ntemp>0
    \advance\!xpos by \!dxpos  \advance\!ypos by \!dypos
    \kern\!xpos\raise\!ypos\copy\!putobject\kern-\!xpos%
    \advance\!ntemp by -1
  \repeat
  \!doaccounting 
  \!multiput}
\def\accountingon{\def\!doaccounting{\!!doaccounting}\ignorespaces}
\def\!!doaccounting{%
  \!xtemp=\!xpos  
  \!ytemp=\!ypos
  \ifdim\!xtemp<\!xleft 
     \!xleft=\!xtemp 
  \fi
  \advance\!xtemp by  \!wd 
  \ifdim\!xright<\!xtemp 
    \!xright=\!xtemp
  \fi
  \advance\!ytemp by -\!dp
  \ifdim\!ytemp<\!ybot  
    \!ybot=\!ytemp
  \fi
  \advance\!ytemp by  \!dp
  \advance\!ytemp by  \!ht 
  \ifdim\!ytemp>\!ytop  
    \!ytop=\!ytemp  
  \fi}
\long\def\!setputobject#1#2{%
  \setbox\!putobject=\hbox{#1}%
  \!ht=\ht\!putobject  \!dp=\dp\!putobject  \!wd=\wd\!putobject
  \wd\!putobject=\!zpt
  \!xshift=.5\!wd   \!yshift=.5\!ht   \advance\!yshift by -.5\!dp
  \edef\!putorientation{#2}%
  \expandafter\!SPOreadA\!putorientation[]\!nil%
  \expandafter\!SPOreadB\!putorientation<\!zpt,\!zpt>\!nil\ignorespaces}
\def\!SPOreadA#1[#2]#3\!nil{\!etfor\!orientation:=#2\do\!SPOreviseshift}
\def\!SPOreadB#1<#2,#3>#4\!nil{\advance\!xshift by -#2\advance\!yshift by -#3}
\def\!SPOreviseshift{%
  \if l\!orientation 
    \!xshift=\!zpt
  \else 
    \if r\!orientation 
      \!xshift=\!wd
    \else 
      \if b\!orientation
        \!yshift=-\!dp
      \else 
        \if B\!orientation 
          \!yshift=\!zpt
        \else 
          \if t\!orientation 
            \!yshift=\!ht
          \fi 
        \fi
      \fi
    \fi
  \fi}
\long\def\!dimenput#1#2(#3,#4){%
  \!setputobject{#1}{#2}%
  \!xpos=#3\advance\!xpos by -\!xshift
  \!ypos=#4\advance\!ypos by -\!yshift
  \kern\!xpos\raise\!ypos\box\!putobject\kern-\!xpos%
  \!doaccounting\ignorespaces}
\def\!setdimenmode{%
  \let\!M=\!M!!\ignorespaces}
\def\!setcoordmode{%
  \let\!M=\!M!\ignorespaces}
\def\!ifcoordmode{%
  \ifx \!M \!M!}
\def\!ifdimenmode{%
  \ifx \!M \!M!!}
\def\!M!#1#2{#1#2} 
\def\!M!!#1#2{#1}
\let\setdimensionmode=\!setdimenmode
\let\setcoordinatemode=\!setcoordmode
\def\!stack[#1]{%
  \let\!lglue=\hfill \let\!rglue=\hfill
  \expandafter\let\csname !#1glue\endcsname=\relax
  \!ifnextchar<{\!!stack}{\!!stack<\stackleading>}}
\def\!!stack<#1>#2{%
  \vbox{\def\!valueslist{}\!ecfor\!value:=#2\do{%
    \expandafter\!rightappend\!value\withCS{\\}\to\!valueslist}%
    \!lop\!valueslist\to\!value
    \let\\=\cr\lineskiplimit=\maxdimen\lineskip=#1%
    \baselineskip=-1000pt\halign{\!lglue##\!rglue\cr \!value\!valueslist\cr}}%
  \ignorespaces}
\def\!lines[#1]#2{%
  \let\!lglue=\hfill \let\!rglue=\hfill
  \expandafter\let\csname !#1glue\endcsname=\relax
  \vbox{\halign{\!lglue##\!rglue\cr #2\crcr}}%
  \ignorespaces}
\def\!Lines[#1]#2{%
  \let\!lglue=\hfill \let\!rglue=\hfill
  \expandafter\let\csname !#1glue\endcsname=\relax
  \vtop{\halign{\!lglue##\!rglue\cr #2\crcr}}%
  \ignorespaces}
\def\setplotsymbol(#1#2){%
  \!setputobject{#1}{#2}
  \setbox\!plotsymbol=\box\!putobject%
  \!plotsymbolxshift=\!xshift 
  \!plotsymbolyshift=\!yshift 
  \ignorespaces}
\font\fiverm=cmr5
\def\!!plot(#1,#2){%
  \!dimenA=-\!plotxorigin \advance \!dimenA by #1
  \!dimenB=-\!plotyorigin \advance \!dimenB by #2
  \kern\!dimenA\raise\!dimenB\copy\!plotsymbol\kern-\!dimenA%
  \ignorespaces}
\def\!!!plot(#1,#2){%
  \!dimenA=-\!plotxorigin \advance \!dimenA by #1
  \!dimenB=-\!plotyorigin \advance \!dimenB by #2
  \kern\!dimenA\raise\!dimenB\copy\!plotsymbol\kern-\!dimenA%
  \!countE=\!dimenA
  \!countF=\!dimenB
  \immediate\write\!replotfile{\the\!countE,\the\!countF.}%
  \ignorespaces}
\def\savelinesandcurves on "#1" {%
  \immediate\closeout\!replotfile
  \immediate\openout\!replotfile=#1%
  \let\!plot=\!!!plot}
\def\dontsavelinesandcurves {%
  \let\!plot=\!!plot}
\xdef\!Commentsignal{
\def\writesavefile#1 {%
  \immediate\write\!replotfile{\!Commentsignal #1}%
  \ignorespaces}

\def\replot"#1" {%
  \expandafter\!replot\input #1 /}
\def\!replot#1,#2. {%
  \!dimenA=#1sp
  \kern\!dimenA\raise#2sp\copy\!plotsymbol\kern-\!dimenA
  \futurelet\!nextchar\!!replot}
\def\!!replot{%
  \if /\!nextchar 
    \def\!next{\!finish}%
  \else
    \def\!next{\!replot}%
  \fi
  \!next}


 
 
\def\!Pythag#1#2#3{%
  \!dimenE=#1\relax                                     
  \ifdim\!dimenE<\!zpt 
    \!dimenE=-\!dimenE 
  \fi
  \!dimenF=#2\relax
  \ifdim\!dimenF<\!zpt 
    \!dimenF=-\!dimenF 
  \fi
  \advance \!dimenF by \!dimenE
  \ifdim\!dimenF=\!zpt 
    \!dimenG=\!zpt
  \else 
    \!divide{8\!dimenE}\!dimenF\!dimenE
    \advance\!dimenE by -4pt
      \!dimenE=2\!dimenE
    \!removept\!dimenE\!!t
    \!dimenE=\!!t\!dimenE
    \advance\!dimenE by 64pt
    \divide \!dimenE by 2
    \!dimenH=7pt
    \!!Pythag\!!Pythag\!!Pythag
    \!removept\!dimenH\!!t
    \!dimenG=\!!t\!dimenF
    \divide\!dimenG by 8
  \fi
  #3=\!dimenG
  \ignorespaces}

\def\!!Pythag{
  \!divide\!dimenE\!dimenH\!dimenI
  \advance\!dimenH by \!dimenI
    \divide\!dimenH by 2}

\def\placehypotenuse for <#1> and <#2> in <#3> {%
  \!Pythag{#1}{#2}{#3}}

 
 
 
\def\!qjoin (#1,#2) (#3,#4){%
  \advance\!intervalno by 1
  \!ifcoordmode
    \edef\!xmidpt{#1}\edef\!ymidpt{#2}%
  \else
    \!dimenA=#1\relax \edef\!xmidpt{\the\!dimenA}%
    \!dimenA=#2\relax \edef\!ymidpt{\the\!dimenA}%
  \fi
  \!xM=\!M{#1}\!xunit  \!yM=\!M{#2}\!yunit   \!rotateaboutpivot\!xM\!yM
  \!xE=\!M{#3}\!xunit  \!yE=\!M{#4}\!yunit   \!rotateaboutpivot\!xE\!yE
%
  \!dimenA=\!xM  \advance \!dimenA by -\!xS
  \!dimenB=\!xE  \advance \!dimenB by -\!xM
  \!xB=3\!dimenA \advance \!xB by -\!dimenB
  \!xC=2\!dimenB \advance \!xC by -2\!dimenA
%
  \!dimenA=\!yM  \advance \!dimenA by -\!yS%
  \!dimenB=\!yE  \advance \!dimenB by -\!yM%
  \!yB=3\!dimenA \advance \!yB by -\!dimenB%
  \!yC=2\!dimenB \advance \!yC by -2\!dimenA%
%
  \!xprime=\!xB  \!yprime=\!yB
  \!dxprime=.5\!xC  \!dyprime=.5\!yC
  \!getf \!midarclength=\!dimenA
  \!getf \advance \!midarclength by 4\!dimenA
  \!getf \advance \!midarclength by \!dimenA
  \divide \!midarclength by 12
%
  \!arclength=\!dimenA
  \!getf \advance \!arclength by 4\!dimenA
  \!getf \advance \!arclength by \!dimenA
  \divide \!arclength by 12
  \advance \!arclength by \!midarclength
  \global\advance \totalarclength by \!arclength
%
%
  \ifdim\!distacross>\!arclength 
    \advance \!distacross by -\!arclength
  \else
    \!initinverseinterp
    \loop\ifdim\!distacross<\!arclength
      \!inverseinterp
      \!xpos=\!t\!xC \advance\!xpos by \!xB
        \!xpos=\!t\!xpos \advance \!xpos by \!xS
      \!ypos=\!t\!yC \advance\!ypos by \!yB
        \!ypos=\!t\!ypos \advance \!ypos by \!yS
      \!plotifinbounds
      \advance\!distacross \plotsymbolspacing
      \!advancedashing
    \repeat  
    \advance \!distacross by -\!arclength
  \fi
  \!xS=\!xE
  \!yS=\!yE
  \ignorespaces}

\def\!getf{\!Pythag\!xprime\!yprime\!dimenA%
  \advance\!xprime by \!dxprime
  \advance\!yprime by \!dyprime}

\def\!initinverseinterp{%
  \ifdim\!arclength>\!zpt
    \!divide{8\!midarclength}\!arclength\!dimenE
    \ifdim\!dimenE<\!wmin \!setinverselinear
    \else 
      \ifdim\!dimenE>\!wmax \!setinverselinear
      \else
        \def\!inverseinterp{\!inversequad}\ignorespaces
%
%
         \!removept\!dimenE\!Ew
         \!dimenF=-\!Ew\!dimenE
         \advance\!dimenF by 32pt
         \!dimenG=8pt 
         \advance\!dimenG by -\!dimenE
         \!dimenG=\!Ew\!dimenG
         \!divide\!dimenF\!dimenG\!beta
         \!gamma=1pt
         \advance \!gamma by -\!beta
      \fi
    \fi
  \fi
  \ignorespaces}

\def\!inversequad{%
  \!divide\!distacross\!arclength\!dimenG
  \!removept\!dimenG\!v
  \!dimenG=\!v\!gamma
  \advance\!dimenG by \!beta
  \!dimenG=\!v\!dimenG
  \!removept\!dimenG\!t}

\def\!setinverselinear{%
  \def\!inverseinterp{\!inverselinear}%
  \divide\!dimenE by 8 \!removept\!dimenE\!t
  \!countC=\!intervalno \multiply \!countC 2
  \!countB=\!countC     \advance \!countB -1
  \!countA=\!countB     \advance \!countA -1
  \wlog{\the\!countB th point (\!xmidpt,\!ymidpt) being plotted 
    doesn't lie in the}%
  \wlog{ middle third of the arc between the \the\!countA th 
    and \the\!countC th points:}%
  \wlog{ [arc length \the\!countA\space to \the\!countB]/[arc length 
    \the \!countA\space to \the\!countC]=\!t.}%
  \ignorespaces}
 
\def\!inverselinear{%
  \!divide\!distacross\!arclength\!dimenG
  \!removept\!dimenG\!t}

 

\def\startrotation{%
  \let\!rotateaboutpivot=\!!rotateaboutpivot
  \let\!rotateonly=\!!rotateonly
  \!ifnextchar{b}{\!getsincos }%
    {\!getsincos by {\!cosrotationangle} {\!sinrotationangle} }}
\def\!getsincos by #1 #2 {%
  \edef\!cosrotationangle{#1}%
  \edef\!sinrotationangle{#2}%
  \!ifcoordmode 
    \let\!ROnext=\!ccheckforpivot
  \else
    \let\!ROnext=\!dcheckforpivot
  \fi
  \!ROnext}
\def\!ccheckforpivot{%
  \!ifnextchar{a}{\!cgetpivot}%
    {\!cgetpivot about {\!xpivotcoord} {\!ypivotcoord} }}
\def\!cgetpivot about #1 #2 {%
  \edef\!xpivotcoord{#1}%
  \edef\!ypivotcoord{#2}%
  \!xpivot=#1\!xunit  \!ypivot=#2\!yunit
  \ignorespaces}
\def\!dcheckforpivot{%
  \!ifnextchar{a}{\!dgetpivot}{\ignorespaces}}
\def\!dgetpivot about #1 #2 {%
  \!xpivot=#1\relax  \!ypivot=#2\relax
  \ignorespaces}

\def\stoprotation{%
  \let\!rotateaboutpivot=\!!!rotateaboutpivot
  \let\!rotateonly=\!!!rotateonly
  \ignorespaces}
 
\def\!!rotateaboutpivot#1#2{%
  \!dimenA=#1\relax  \advance\!dimenA -\!xpivot
  \!dimenB=#2\relax  \advance\!dimenB -\!ypivot
  \!dimenC=\!cosrotationangle\!dimenA
    \advance \!dimenC -\!sinrotationangle\!dimenB
  \!dimenD=\!cosrotationangle\!dimenB
    \advance \!dimenD  \!sinrotationangle\!dimenA
  \advance\!dimenC \!xpivot  \advance\!dimenD \!ypivot
  #1=\!dimenC  #2=\!dimenD
  \ignorespaces}

\def\!!rotateonly#1#2{%
  \!dimenA=#1\relax  \!dimenB=#2\relax 
  \!dimenC=\!cosrotationangle\!dimenA
    \advance \!dimenC -\!rotsign\!sinrotationangle\!dimenB
  \!dimenD=\!cosrotationangle\!dimenB
    \advance \!dimenD  \!rotsign\!sinrotationangle\!dimenA
  #1=\!dimenC  #2=\!dimenD
  \ignorespaces}
\def\!rotsign{}
\def\!!!rotateaboutpivot#1#2{\relax}
\def\!!!rotateonly#1#2{\relax}
\stoprotation

\def\!reverserotateonly#1#2{%
  \def\!rotsign{-}%
  \!rotateonly{#1}{#2}%
  \def\!rotsign{}%
  \ignorespaces}

\def\!getspan span <#1>{%
  \!dshade=#1\relax
  \!ifcoordmode 
    \let\!GRnext=\!GRccheckforAP
  \else
    \let\!GRnext=\!GRdcheckforAP
  \fi
  \!GRnext}
\def\!GRccheckforAP{%
  \!ifnextchar{p}{\!cgetanchor }
    {\!cgetanchor point at {\!xshadesave} {\!yshadesave} }}
\def\!cgetanchor point at #1 #2 {%
  \edef\!xshadesave{#1}\edef\!yshadesave{#2}%
  \!xshade=\!xshadesave\!xunit  \!yshade=\!yshadesave\!yunit
  \ignorespaces}
\def\!GRdcheckforAP{%
  \!ifnextchar{p}{\!dgetanchor}%
    {\ignorespaces}}
\def\!dgetanchor point at #1 #2 {%
  \!xshade=#1\relax  \!yshade=#2\relax
  \ignorespaces}

\def\setshadesymbol{%
  \!ifnextchar<{\!setshadesymbol}{\!setshadesymbol<,,,> }}

\def\!setshadesymbol <#1,#2,#3,#4> (#5#6){%
  \!setputobject{#5}{#6}%
  \setbox\!shadesymbol=\box\!putobject%
  \!shadesymbolxshift=\!xshift \!shadesymbolyshift=\!yshift
%
  \!dimenA=\!xshift \advance\!dimenA \!smidge
  \!override\!dimenA{#1}\!lshrinkage%
  \!dimenA=\!wd \advance \!dimenA -\!xshift
    \advance\!dimenA \!smidge
    \!override\!dimenA{#2}\!rshrinkage
  \!dimenA=\!dp \advance \!dimenA \!yshift
    \advance\!dimenA \!smidge
    \!override\!dimenA{#3}\!bshrinkage
  \!dimenA=\!ht \advance \!dimenA -\!yshift
    \advance\!dimenA \!smidge
    \!override\!dimenA{#4}\!tshrinkage
  \ignorespaces}
\def\!smidge{-.2pt}%

\def\!override#1#2#3{%
  \edef\!!override{#2}%
  \ifx \!!override\empty
    #3=#1\relax
  \else
    \if z\!!override
      #3=\!zpt
    \else
      \ifx \!!override\!blankz
        #3=\!zpt
      \else
        #3=#2\relax
      \fi
    \fi
  \fi
  \ignorespaces}
\def\!blankz{ z}

\setshadesymbol ({\fiverm .})

\def\!startvshade#1(#2,#3,#4){%
  \let\!!xunit=\!xunit%
  \let\!!yunit=\!yunit%
  \let\!!xshade=\!xshade%
  \let\!!yshade=\!yshade%
  \def\!getshrinkages{\!vgetshrinkages}%
  \let\!setshadelocation=\!vsetshadelocation%
  \!xS=\!M{#2}\!!xunit
  \!ybS=\!M{#3}\!!yunit
  \!ytS=\!M{#4}\!!yunit
  \!shadexorigin=\!xorigin  \advance \!shadexorigin \!shadesymbolxshift
  \!shadeyorigin=\!yorigin  \advance \!shadeyorigin \!shadesymbolyshift
  \ignorespaces}
 
\def\!starthshade#1(#2,#3,#4){%
  \let\!!xunit=\!yunit%
  \let\!!yunit=\!xunit%
  \let\!!xshade=\!yshade%
  \let\!!yshade=\!xshade%
  \def\!getshrinkages{\!hgetshrinkages}%
  \let\!setshadelocation=\!hsetshadelocation%
  \!xS=\!M{#2}\!!xunit
  \!ybS=\!M{#3}\!!yunit
  \!ytS=\!M{#4}\!!yunit
  \!shadexorigin=\!xorigin  \advance \!shadexorigin \!shadesymbolxshift
  \!shadeyorigin=\!yorigin  \advance \!shadeyorigin \!shadesymbolyshift
  \ignorespaces}

\def\!lattice#1#2#3#4#5{%
  \!dimenA=#1
  \!dimenB=#2
  \!countB=\!dimenB
%
  \!dimenC=#3
  \advance\!dimenC -\!dimenA
  \!countA=\!dimenC
  \divide\!countA \!countB
  \ifdim\!dimenC>\!zpt
    \!dimenD=\!countA\!dimenB
    \ifdim\!dimenD<\!dimenC
      \advance\!countA 1 
    \fi
  \fi
  \!dimenC=\!countA\!dimenB
    \advance\!dimenC \!dimenA
  #4=\!countA
  #5=\!dimenC
  \ignorespaces}

\def\!qshade#1(#2,#3,#4)#5(#6,#7,#8){%
  \!xM=\!M{#2}\!!xunit
  \!ybM=\!M{#3}\!!yunit
  \!ytM=\!M{#4}\!!yunit
  \!xE=\!M{#6}\!!xunit
  \!ybE=\!M{#7}\!!yunit
  \!ytE=\!M{#8}\!!yunit
  \!getcoeffs\!xS\!ybS\!xM\!ybM\!xE\!ybE\!ybB\!ybC
  \!getcoeffs\!xS\!ytS\!xM\!ytM\!xE\!ytE\!ytB\!ytC
  \def\!getylimits{\!qgetylimits}%
  \!shade{#1}\ignorespaces}
 
\def\!lshade#1(#2,#3,#4){%
  \!xE=\!M{#2}\!!xunit
  \!ybE=\!M{#3}\!!yunit
  \!ytE=\!M{#4}\!!yunit
  \!dimenE=\!xE  \advance \!dimenE -\!xS
  \!dimenC=\!ytE \advance \!dimenC -\!ytS
  \!divide\!dimenC\!dimenE\!ytB
  \!dimenC=\!ybE \advance \!dimenC -\!ybS
  \!divide\!dimenC\!dimenE\!ybB
  \def\!getylimits{\!lgetylimits}%
  \!shade{#1}\ignorespaces}
 
\def\!getcoeffs#1#2#3#4#5#6#7#8{%
  \!dimenC=#4\advance \!dimenC -#2
  \!dimenE=#3\advance \!dimenE -#1
  \!divide\!dimenC\!dimenE\!dimenF
  \!dimenC=#6\advance \!dimenC -#4
  \!dimenH=#5\advance \!dimenH -#3
  \!divide\!dimenC\!dimenH\!dimenG
  \advance\!dimenG -\!dimenF
  \advance \!dimenH \!dimenE
  \!divide\!dimenG\!dimenH#8
  \!removept#8\!t
  #7=-\!t\!dimenE
  \advance #7\!dimenF
  \ignorespaces}

\def\!shade#1{%
  \!getshrinkages#1<,,,>\!nil
  \advance \!dimenE \!xS
  \!lattice\!!xshade\!dshade\!dimenE
    \!parity\!xpos
  \!dimenF=-\!dimenF
    \advance\!dimenF \!xE
  \!loop\!not{\ifdim\!xpos>\!dimenF}
    \!shadecolumn%
    \advance\!xpos \!dshade
    \advance\!parity 1
  \repeat
  \!xS=\!xE
  \!ybS=\!ybE
  \!ytS=\!ytE
  \ignorespaces}

\def\!vgetshrinkages#1<#2,#3,#4,#5>#6\!nil{%
  \!override\!lshrinkage{#2}\!dimenE
  \!override\!rshrinkage{#3}\!dimenF
  \!override\!bshrinkage{#4}\!dimenG
  \!override\!tshrinkage{#5}\!dimenH
  \ignorespaces}
\def\!hgetshrinkages#1<#2,#3,#4,#5>#6\!nil{%
  \!override\!lshrinkage{#2}\!dimenG
  \!override\!rshrinkage{#3}\!dimenH
  \!override\!bshrinkage{#4}\!dimenE
  \!override\!tshrinkage{#5}\!dimenF
  \ignorespaces}

\def\!shadecolumn{%
  \!dxpos=\!xpos
  \advance\!dxpos -\!xS
  \!removept\!dxpos\!dx
  \!getylimits
  \advance\!ytpos -\!dimenH
  \advance\!ybpos \!dimenG
  \!yloc=\!!yshade
  \ifodd\!parity 
     \advance\!yloc \!dshade
  \fi
  \!lattice\!yloc{2\!dshade}\!ybpos%
    \!countA\!ypos
  \!dimenA=-\!shadexorigin \advance \!dimenA \!xpos
  \loop\!not{\ifdim\!ypos>\!ytpos}
    \!setshadelocation
    \!rotateaboutpivot\!xloc\!yloc%
    \!dimenA=-\!shadexorigin \advance \!dimenA \!xloc
    \!dimenB=-\!shadeyorigin \advance \!dimenB \!yloc
    \kern\!dimenA \raise\!dimenB\copy\!shadesymbol \kern-\!dimenA
    \advance\!ypos 2\!dshade
  \repeat
  \ignorespaces}
 
\def\!qgetylimits{%
  \!dimenA=\!dx\!ytC              
  \advance\!dimenA \!ytB
  \!ytpos=\!dx\!dimenA
  \advance\!ytpos \!ytS
  \!dimenA=\!dx\!ybC              
  \advance\!dimenA \!ybB
  \!ybpos=\!dx\!dimenA
  \advance\!ybpos \!ybS}
 
\def\!lgetylimits{%
  \!ytpos=\!dx\!ytB
  \advance\!ytpos \!ytS
  \!ybpos=\!dx\!ybB
  \advance\!ybpos \!ybS}
 
\def\!vsetshadelocation{
  \!xloc=\!xpos
  \!yloc=\!ypos}
\def\!hsetshadelocation{
  \!xloc=\!ypos
  \!yloc=\!xpos}





\def\!axisticks {%
  \def\!nextkeyword##1 {%
    \expandafter\ifx\csname !ticks##1\endcsname \relax
      \def\!next{\!fixkeyword{##1}}%
    \else
      \def\!next{\csname !ticks##1\endcsname}%
    \fi
    \!next}%
  \!axissetup
    \def\!axissetup{\relax}%
  \edef\!ticksinoutsign{\!ticksinoutSign}%
  \!ticklength=\longticklength
  \!tickwidth=\linethickness
  \!gridlinestatus
  \!setticktransform
  \!maketick
  \!tickcase=0
  \def\!LTlist{}%
  \!nextkeyword}

\def\ticksout{%
  \def\!ticksinoutSign{+}}

\ticksout

\def\nogridlines{%
  \def\!gridlinestatus{\!gridlinestoofalse}}
\nogridlines

\def\loggedticks{%
  \def\!setticktransform{\let\!ticktransform=\!logten}}
\def\unloggedticks{%
  \def\!setticktransform{\let\!ticktransform=\!donothing}}
\def\!donothing#1#2{\def#2{#1}}
\unloggedticks

\expandafter\def\csname !ticks/\endcsname{%
  \!not {\ifx \!LTlist\empty}
    \!placetickvalues
  \fi
  \def\!tickvalueslist{}%
  \def\!LTlist{}%
  \expandafter\csname !axis/\endcsname}

\def\!maketick{%
  \setbox\!boxA=\hbox{%
    \beginpicture
      \!setdimenmode
      \setcoordinatesystem point at {\!zpt} {\!zpt}   
      \linethickness=\!tickwidth
      \ifdim\!ticklength>\!zpt
        \putrule from {\!zpt} {\!zpt} to
          {\!ticksinoutsign\!tickxsign\!ticklength}
          {\!ticksinoutsign\!tickysign\!ticklength}
      \fi
      \if!gridlinestoo
        \putrule from {\!zpt} {\!zpt} to
          {-\!tickxsign\!xaxislength} {-\!tickysign\!yaxislength}
      \fi
    \endpicturesave <\!Xsave,\!Ysave>}%
    \wd\!boxA=\!zpt}
  
\def\!ticksin{%
  \def\!ticksinoutsign{-}%
  \!maketick
  \!nextkeyword}

\def\!ticksout{%
  \def\!ticksinoutsign{+}%
  \!maketick
  \!nextkeyword}

\def\!tickslength<#1> {%
  \!ticklength=#1\relax
  \!maketick
  \!nextkeyword}

\def\!tickslong{%
  \!tickslength<\longticklength> }

\def\!ticksshort{%
  \!tickslength<\shortticklength> }

\def\!tickswidth<#1> {%
  \!tickwidth=#1\relax
  \!maketick
  \!nextkeyword}

\def\!ticksandacross{%
  \!gridlinestootrue
  \!maketick
  \!nextkeyword}

\def\!ticksbutnotacross{%
  \!gridlinestoofalse
  \!maketick
  \!nextkeyword}

\def\!tickslogged{%
  \let\!ticktransform=\!logten
  \!nextkeyword}

\def\!ticksunlogged{%
  \let\!ticktransform=\!donothing
  \!nextkeyword}

\def\!ticksunlabeled{%
  \!tickcase=0
  \!nextkeyword}

\def\!ticksnumbered{%
  \!tickcase=1
  \!nextkeyword}

\def\!tickswithvalues#1/ {%
  \edef\!tickvalueslist{#1! /}%
  \!tickcase=2
  \!nextkeyword}

\def\!ticksquantity#1 {%
  \ifnum #1>1
    \!updatetickoffset
    \!countA=#1\relax
    \advance \!countA -1
    \!ticklocationincr=\!axisLength
      \divide \!ticklocationincr \!countA
    \!ticklocation=\!axisstart
    \loop \!not{\ifdim \!ticklocation>\!axisend}
      \!placetick\!ticklocation
      \ifcase\!tickcase
          \relax 
        \or
          \relax 
        \or
          \expandafter\!gettickvaluefrom\!tickvalueslist
          \edef\!tickfield{{\the\!ticklocation}{\!value}}%
          \expandafter\!listaddon\expandafter{\!tickfield}\!LTlist%
      \fi
      \advance \!ticklocation \!ticklocationincr
    \repeat
  \fi
  \!nextkeyword}

\def\!ticksat#1 {%
  \!updatetickoffset
  \edef\!Loc{#1}%
  \if /\!Loc
    \def\next{\!nextkeyword}%
  \else
    \!ticksincommon
    \def\next{\!ticksat}%
  \fi
  \next}    
      
\def\!ticksfrom#1 to #2 by #3 {%
  \!updatetickoffset
  \edef\!arg{#3}%
  \expandafter\!separate\!arg\!nil
  \!scalefactor=1
  \expandafter\!countfigures\!arg/
  \edef\!arg{#1}%
  \!scaleup\!arg by\!scalefactor to\!countE
  \edef\!arg{#2}%
  \!scaleup\!arg by\!scalefactor to\!countF
  \edef\!arg{#3}%
  \!scaleup\!arg by\!scalefactor to\!countG
  \loop \!not{\ifnum\!countE>\!countF}
    \ifnum\!scalefactor=1
      \edef\!Loc{\the\!countE}%
    \else
      \!scaledown\!countE by\!scalefactor to\!Loc
    \fi
    \!ticksincommon
    \advance \!countE \!countG
  \repeat
  \!nextkeyword}

\def\!updatetickoffset{%
  \!dimenA=\!ticksinoutsign\!ticklength
  \ifdim \!dimenA>\!offset
    \!offset=\!dimenA
  \fi}

\def\!placetick#1{%
  \if!xswitch
    \!xpos=#1\relax
    \!ypos=\!axisylevel
  \else
    \!xpos=\!axisxlevel
    \!ypos=#1\relax
  \fi
  \advance\!xpos \!Xsave
  \advance\!ypos \!Ysave
  \kern\!xpos\raise\!ypos\copy\!boxA\kern-\!xpos
  \ignorespaces}

\def\!gettickvaluefrom#1 #2 /{%
  \edef\!value{#1}%
  \edef\!tickvalueslist{#2 /}%
  \ifx \!tickvalueslist\!endtickvaluelist
    \!tickcase=0
  \fi}
\def\!endtickvaluelist{! /}

\def\!ticksincommon{%
  \!ticktransform\!Loc\!t
  \!ticklocation=\!t\!!unit
  \advance\!ticklocation -\!!origin
  \!placetick\!ticklocation
  \ifcase\!tickcase
    \relax 
  \or 
    \ifdim\!ticklocation<-\!!origin
      \edef\!Loc{$\!Loc$}%
    \fi
    \edef\!tickfield{{\the\!ticklocation}{\!Loc}}%
    \expandafter\!listaddon\expandafter{\!tickfield}\!LTlist%
  \or 
    \expandafter\!gettickvaluefrom\!tickvalueslist
    \edef\!tickfield{{\the\!ticklocation}{\!value}}%
    \expandafter\!listaddon\expandafter{\!tickfield}\!LTlist%
  \fi}

\def\!separate#1\!nil{%
  \!ifnextchar{-}{\!!separate}{\!!!separate}#1\!nil}
\def\!!separate-#1\!nil{%
  \def\!sign{-}%
  \!!!!separate#1..\!nil}
\def\!!!separate#1\!nil{%
  \def\!sign{+}%
  \!!!!separate#1..\!nil}
\def\!!!!separate#1.#2.#3\!nil{%
  \def\!arg{#1}%
  \ifx\!arg\!empty
    \!countA=0
  \else
    \!countA=\!arg
  \fi
  \def\!arg{#2}%
  \ifx\!arg\!empty
    \!countB=0
  \else
    \!countB=\!arg
  \fi}
 
\def\!countfigures#1{%
  \if #1/%
    \def\!next{\ignorespaces}%
  \else
    \multiply\!scalefactor 10
    \def\!next{\!countfigures}%
  \fi
  \!next}

\def\!scaleup#1by#2to#3{%
  \expandafter\!separate#1\!nil
  \multiply\!countA #2\relax
  \advance\!countA \!countB
  \if -\!sign
    \!countA=-\!countA
  \fi
  #3=\!countA
  \ignorespaces}

\def\!scaledown#1by#2to#3{%
  \!countA=#1\relax
  \ifnum \!countA<0 
    \def\!sign{-}
    \!countA=-\!countA
  \else
    \def\!sign{}%
  \fi
  \!countB=\!countA
  \divide\!countB #2\relax
  \!countC=\!countB
    \multiply\!countC #2\relax
  \advance \!countA -\!countC
  \edef#3{\!sign\the\!countB.}
  \!countC=\!countA 
  \ifnum\!countC=0 
    \!countC=1
  \fi
  \multiply\!countC 10
  \!loop \ifnum #2>\!countC
    \edef#3{#3\!zero}%
    \multiply\!countC 10
  \repeat
  \edef#3{#3\the\!countA}
  \ignorespaces}

\def\!placetickvalues{%
  \advance\!offset \tickstovaluesleading
  \if!xswitch
    \setbox\!boxA=\hbox{%
      \def\\##1##2{%
        \!dimenput {##2} [B] (##1,\!axisylevel)}%
      \beginpicture 
        \!LTlist
      \endpicturesave <\!Xsave,\!Ysave>}%
    \!dimenA=\!axisylevel
      \advance\!dimenA -\!Ysave
      \advance\!dimenA \!tickysign\!offset
      \if -\!tickysign
        \advance\!dimenA -\ht\!boxA
      \else
        \advance\!dimenA  \dp\!boxA
      \fi
    \advance\!offset \ht\!boxA 
      \advance\!offset \dp\!boxA
    \!dimenput {\box\!boxA} [Bl] <\!Xsave,\!Ysave> (\!zpt,\!dimenA)
  \else
    \setbox\!boxA=\hbox{%
      \def\\##1##2{%
        \!dimenput {##2} [r] (\!axisxlevel,##1)}%
      \beginpicture 
        \!LTlist
      \endpicturesave <\!Xsave,\!Ysave>}%
    \!dimenA=\!axisxlevel
      \advance\!dimenA -\!Xsave
      \advance\!dimenA \!tickxsign\!offset
      \if -\!tickxsign
        \advance\!dimenA -\wd\!boxA
      \fi
    \advance\!offset \wd\!boxA
    \!dimenput {\box\!boxA} [Bl] <\!Xsave,\!Ysave> (\!dimenA,\!zpt)
  \fi}

\normalgraphs
\catcode`!=12 


 
\catcode`@=11 \catcode`!=11
  
\let\!pictexendpicture=\endpicture 
\let\!pictexframe=\frame
\let\!pictexlinethickness=\linethickness
\let\!pictexmultiput=\multiput
\let\!pictexput=\put

\def\beginpicture{%
  \setbox\!picbox=\hbox\bgroup%
  \let\endpicture=\!pictexendpicture
  \let\frame=\!pictexframe
  \let\linethickness=\!pictexlinethickness
  \let\multiput=\!pictexmultiput
  \let\put=\!pictexput
  \let\input=\@@input   
  \!xleft=\maxdimen  
  \!xright=-\maxdimen
  \!ybot=\maxdimen
  \!ytop=-\maxdimen}

\let\frame=\!latexframe

\let\pictexframe=\!pictexframe

\let\linethickness=\!latexlinethickness
\let\pictexlinethickness=\!pictexlinethickness

\let\\=\@normalcr
\catcode`@=12 \catcode`!=12


\setcounter{totalnumber}{50}
\setcounter{topnumber}{50}
\setcounter{bottomnumber}{50}


\def\text#1{\mbox{#1}}

\newtheorem {theorem}{Theorem}[section]
\newtheorem {definition}[theorem]{Definition}
\newtheorem {lemma}[theorem]{Lemma}
\newtheorem {question}[theorem]{Question}
\newtheorem {remark}[theorem]{Remark}
\newtheorem {prop}[theorem]{Proposition}
\newtheorem {corollary}[theorem]{Corollary}
\newtheorem {assumption}[theorem]{Assumption}

\newtheorem*{maintheorem}{Main Theorem}
\newtheorem*{2ndmaintheorem}{Second Main Theorem}
\newtheorem*{mainexample1}{Examples having interesting first return maps} 
\newtheorem*{mainexample2}{Example having random walk-like behaviour} 
\newtheorem*{definitionno}{Definition}
\newtheorem*{conjectureno}{Conjecture}

\newenvironment{proofof}[1]{\medskip 
\noindent{\it Proof of #1.}}{ \hfill\qed\\ }

\renewcommand{\rho}{\varrho}

\newcommand\dist{\mbox{\rm dist}}

\newcommand\M{\, {\mathcal M}\, }
\newcommand\MS{\, {\mathcal M}^*\, }
\newcommand\MB{\, {\mathcal M}^\bullet\, }

\newcommand\rz{{\mathbb R}}
\newcommand\R{{\mathbb R}}

\newcommand\N{{\mathbb N}}

\newcommand\bp{{\bar p}}
\newcommand\bq{{\bar q}}
\newcommand{\ii}{\underline 1}

\DeclareMathOperator*{\codim}{codim}
\DeclareMathOperator*{\argmax}{arg\,max}
\DeclareMathOperator*{\argmin}{arg\,min}
\newcommand\BR{{\mathcal B \! \mathcal R \!}}

\setlength{\textwidth}{5.8in}

\begin{document}
\title{Hamiltonian flows with random-walk 
behavior \\originating from zero-sum games and fictitious play}
\author{Sebastian van Strien}
\maketitle 

\begin{center}To Jacob Palis on his 70th birthday
\end{center}
\begin{abstract}
In this paper we introduce Hamiltonian dynamics,  inspired by zero-sum games 
(best response and fictitious play dynamics). 
Taking any piecewise affine Hamiltonian
of a very simple form, the corresponding Hamiltonian vector field
we obtain is discontinuous and multivalued. Nevertheless, somewhat surprisingly, 
the corresponding flow exists, is unique and continuous. 
We believe that these vector fields deserve attention, because 
it turns out that the resulting dynamics are rather different from those found in 
more classically defined Hamiltonian dynamics. The vector field
is extremely simple: it is piecewise constant and so the flow $\phi_t$ piecewise
a translation and in particular has no stationary points.  Even so, the dynamics 
can be rather rich and complicated. For example, 
there exist Hamiltonian vector fields on $\R^4$ of this type with energy level 
sets  homeomorphic to $S^3$  and so that the following holds.
There exists a  periodic orbit $\Gamma$  of the Hamiltonian flow so that,
restricting to the level set containing $\Gamma$,  the first return map $F$ to a two-dimensional
section $Z$  transversal to $\Gamma$ at $x\in \Gamma$
acts as a {\em random-walk}:  there exists a nested sequence of annuli $A_n$ in $Z$ (around $x$ so that  $\cup A_n\cup \{x\}$ is a neighbourhood of $x$ in $Z$)  shrinking geometrically
to $x$  so that  for each sequence $n(i)\ge 0$ with $|n(i+1)-n(i)|\le 1$ 
there exists a point $z\in Z$ so that $F^i(z)\in A_{n(i)}$ for all $i\ge 0$.
These Hamiltonian vector fields can also be used to obtain interesting continuous, area preserving,
piecewise affine maps $R$ on certain two-dimensional  polygons $S$,
for which $R|\partial S=id$ so that $R$ has infinitely many periodic points
(and conjecturally periodic orbits are dense on certain open subsets of $S$).
In the last two sections of the paper we give some applications to game theory, and finish with posing a version of the Palis conjecture in the context of the
class of non-smooth systems studied in this paper. 
\end{abstract}

\noindent 
Keywords: Hamiltonian systems, non-smooth dynamics,  bifurcation, chaos, fictitious pay, learning process, replicator dynamics.\\
2000 MSC: 37J, 37N40, 37G, 34A36, 34A60, 91A20.

\section{Introduction}
In this paper we will introduce a rather unusual class of Hamiltonian systems.
These are motivated by dynamics, usually referred to as {\lq}fictitious play{\rq} and {\lq}best response dynamics{\rq}, associated to zero-sum games.
These Hamiltonian systems differ from those that are considered
traditionally, in that  their orbits consist of piecewise straight lines and first return maps
to certain planes are piecewise isometries.
Specifically, the Hamiltonian systems we consider are continuous and piecewise 
affine, and defined by the Hamiltonian  $H\colon \Sigma\times \Sigma\to \R$,
\begin{equation*}
H(p,q)= \max_{p\in \Sigma}  p' \M  q -  \min_{q\in \Sigma} p'  \M q.
\end{equation*}
Here $\M$ is a $n\times n$ matrix, $p,q\in \Sigma$, the set of 
probability vectors in $\rz^n$ and $p'$  stands for the transpose of $p$.
Note that  $\max_{p\in \Sigma}\,   p' \M  q$ is equal to the largest component(s)
of the column vector $\M q$ and, similarly, $\min_{q\in \Sigma} \, p'  \M q$ is equal to 
the smallest component(s) of the row vector $p'\M$. 
In other words, 
$$
H(p,q)= \max_i \, (\M  q)_i -  \min_j \, (p'  \M)_j\,\, ,  
$$
where $(\M q)_i$ and $(p' \M)_j$ stands for the $i$-th and $j$-th component
of the vectors $\M q$ respectively $p' \M$.
Hence $H$ is continuous and piecewise affine 
($H$ is affine outside some finite union of hyperplanes). 

\begin{definitionno}[Completely mixed Nash equilibrium]
We say that a $n\times n$ matrix $\M$ has an {\em completely mixed Nash equilibrium}
if there exist a unique $\bp,\bq\in \Sigma$ so that all its components are strictly positive
and so that $\bp' \M=\lambda \ii$ and $\M \bq=\mu \ii$ for some $\lambda,\mu\in \R$
where $\ii=(1\, 1 \, \dots \, 1)\in \R^n$. \end{definitionno}

Let us denote the set of all $n\times n$ matrices by $L^n$ and the set
of $n\times n$ matrices with a completely mixed Nash equilibrium by $LI^n$. 
Clearly $LI^n$ is an open subset of $L^n$.

\begin{maintheorem} There exists a subset $V\subset LI^n\times L^n$
which is open and dense and has full Lebesgue, so that for each pair
of $n\times n$ matrices
$(\M,A)\in V$ the following holds:
\begin{enumerate}
\item Any level 
set $H^{-1}(\rho)$, for $\rho>0$ small, is topologically a $2n-3$-sphere made up of hyperplanes.
(Note that the dimension of $\Sigma\times \Sigma$ is $2n-2$.)
\item The Hamiltonian vector field $X_H$ associated to $H$ and the symplectic  2-form
$\sum_{ij}a_{ij} dp_i\wedge dq_j$ (where $(a_{ij})$ are the coefficients of the matrix $A$)
is piecewise constant and set-valued in codimension-one sets.
\item Correspondingly, we have a differential inclusion
$$(\dfrac{dp}{dt},\dfrac{dq}{dt})\in X_H(p,q)$$
where the right hand side is set-valued and $(p,q)\mapsto X_H(p,q)$ is piecewise constant.
\item The differential inclusion induces a unique continuous flow on 
$H^{-1}(\rho)$ (for each $\rho>0$ fixed and small) which is piecewise a translation flow.
The flow has no stationary point.
\item First return maps between hyperplanes are piecewise affine maps.
\end{enumerate}
\end{maintheorem}

\medskip
 
 Although we are dealing with a differential inclusion,
 solutions are unique, and define a continuous flow!
 Moreover, although these Hamiltonian vector fields are very simple in that they
are piecewise constant, they generate surprisingly rich dynamics. 
Let us illustrate this by describing some interesting properties.

\subsection{Examples with interesting properties}

The first of property shows that many of these Hamiltonian vector fields induce
continuous area-preserving piecewise affine maps $R$ on polygons $S$ in $\R^2$ so that 
$R=id$ on $\partial S$ and so that $R$ has infinitely many periodic orbits. 
We conjecture that periodic orbits are dense in open subsets of $S$. To construct 
maps of this type by {\lq}hand{\rq} seems not so easy.

\begin{mainexample1}\label{mainexample1}
There exists an open set of matrices $(\M,A)\in LI^3\times L^3$ so that for each  
corresponding Hamiltonian vector field $X_H$ there exists a topological disc $S$ consisting of four 
triangles (which are embedded in $\R^4$), see 
Figure~\ref{fig:S},  
so that the first return map $R$ to $S$ of the flow of $X_H$  has the following properties:
\begin{enumerate}
\item $R\colon S\to S$ is area-preserving (w.r.t. Lebesgue measure), continuous and piecewise affine;
\item $R=id$ on $\partial S$;
\item $R$ has infinitely many periodic orbits of saddle-type.
\end{enumerate}
\end{mainexample1}

\begin{figure}[htp]
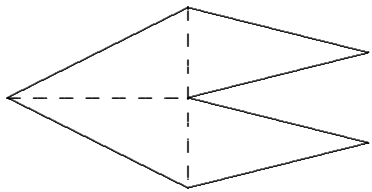
 \hfil
\beginpicture
\dimen0=0.12cm
\dimen1=0.06cm
\setcoordinatesystem units <\dimen0,\dimen1> point at 30 0 
\setplotarea x from 0 to 40, y from -20 to 20
\setlinear
\plot 0 0  20 20  40 10  20 0 40 -10 20 -20 0 0 /
\setdashes
\plot 20 20 20 -20 /
\plot 0 0 20 0 / 
\endpicture
\caption{\label{fig:S}
{\small The topological disc $S$ consisting of four triangles.}}
\end{figure}

\begin{conjectureno} Let $R\colon S\to S$ be as in the previous set
of examples. Then there exists a compact set $K\subset S$ so that the following properties hold.
\begin{itemize}
\item $K$ is either empty or consists of two regions bounded
by two disjoint ellipses in $S$.
\item If $K$ is non-empty, then $R$ permutes the two components
of $K$. Moreover, $R^2|K$ is linearly conjugate to a rotation.
\item Periodic orbits are dense in $S\setminus K$.
\item There exists a dense orbit in $S\setminus K$
and $R$ is  ergodic on $S\setminus K$.
\end{itemize}
\end{conjectureno}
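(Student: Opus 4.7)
The plan is to exploit the explicit piecewise affine structure of $R$ as rigidly as possible. Since $R$ is the first-return map of a piecewise-constant Hamiltonian flow, $S$ decomposes into finitely many polygonal cells $S_1,\ldots,S_N$ on each of which $R$ acts as an affine map $R(x)=L_jx+c_j$ with $L_j\in SL(2,\R)$ (by area preservation). First I would fix a representative member of the open set of examples, compute the matrices $L_j$ explicitly, and classify each cell as elliptic ($|\mathrm{tr}\,L_j|<2$), parabolic, or hyperbolic. Because $R$ is the identity on $\partial S$, the cells adjacent to $\partial S$ are forced to be parabolic with eigenvalue $1$ and non-trivial Jordan block; this, together with continuity across the internal edges, tightly constrains the combinatorics of itineraries and should control how cells of each type are glued.

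For the construction of $K$: the elliptic cells carry invariant ellipses (level sets of the quadratic form conjugating the elliptic $L_j$ to a rotation). I would look for a periodic orbit of $R$, conjecturally of period two, whose iterates all lie in elliptic cells; around each such orbit the largest iterate-invariant ellipse contained in a single affine piece is the natural candidate for a component of $K$, and two period-two orbits interchanged by $R$ give the predicted two-component structure. One then verifies maximality by checking that any orbit escaping the elliptic cell immediately enters a hyperbolic or mismatched-elliptic cell; this is a finite combinatorial check, in principle automatic from the explicit model. The conjugacy of $R^2|K$ to a rotation then follows from the standard normal form of the elliptic linear part.

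The genuinely hard part is density of periodic orbits, existence of a dense orbit, and ergodicity on $S\setminus K$. I would try to adapt Katok--Strelcyn theory for piecewise smooth area-preserving maps: construct an invariant cone field on $S\setminus K$ from the combinatorics of cell itineraries, deduce non-uniform hyperbolicity on a full-measure subset, verify the singular-set integrability condition (automatic, since the discontinuity set of $DR^n$ grows only polynomially for piecewise affine maps with finite combinatorics), and run a Hopf chain along Pesin stable and unstable manifolds to obtain local and then global ergodicity. Density of saddles would then follow from shadowing for uniformly hyperbolic sub-horseshoes produced by transverse homoclinic intersections of the infinite family of saddles already guaranteed by the examples. \emph{The principal obstacle is the cone-field step:} ruling out additional KAM-type elliptic islands around other periodic orbits in $S\setminus K$ amounts to a global Diophantine-type estimate on the products $L_{j_n}\cdots L_{j_1}$ along orbits, and for continuous piecewise affine area-preserving maps no general mechanism of this kind is known. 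This is exactly the difficulty that motivates the Palis-style conjecture in the non-smooth setting alluded to at the end of the paper, so a complete proof in full generality looks out of reach; partial results for a subfamily with especially rigid combinatorics are the realistic intermediate target.
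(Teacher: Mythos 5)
This statement is explicitly labelled as a \emph{conjecture} in the paper, and the paper offers no proof of it; so there is nothing in the source to compare your argument against. What the paper does establish (via the results of \cite{SS09}, summarised in Theorem~\ref{thmB2} and the subsection on visualising the dynamics in $S^3$) is the first two bullet points: the period-two elliptic orbit exists, $R$ permutes the two ``egg-shaped'' regions, and the linearisation of $R^2$ at the elliptic fixed point has a pair of conjugate eigenvalues on the unit circle, hence $R^2$ is locally linearly conjugate to a rotation. Your construction of the candidate $K$ from invariant ellipses in elliptic cells, and the period-two structure, is essentially the same route. The last two bullets --- density of periodic orbits, existence of a dense orbit, and ergodicity on $S\setminus K$ --- are genuinely open, which is precisely why the author poses them as a conjecture and ties them to a Palis-type question in Section~\ref{sec:conclusion}.

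Your Katok--Strelcyn/cone-field/Hopf-chain programme for the open part is a sensible framing, and you are right to flag the cone-field step as the real obstruction. For a \emph{continuous} piecewise affine area-preserving map there is no general mechanism to exclude further elliptic islands (or invariant polygons on which $R^n$ is a finite-order isometry), and the Katok--Strelcyn machinery only delivers ergodicity once non-uniform hyperbolicity on a full-measure set has been established, which is precisely what cannot currently be verified here. One caveat worth noting: your assertion that boundary-adjacent cells are ``forced to be parabolic with a non-trivial Jordan block'' should be tempered --- area-preservation plus identity on a boundary segment only forces the linear part to fix a line with eigenvalue $1$ and to have determinant $1$; the non-trivial Jordan block is the generic case but $L_j=\mathrm{id}$ is not excluded a priori and has to be ruled out from the explicit model. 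In short: your proposal is an honest research sketch, correctly separates what is known from what is open, and no gap in the paper is being missed because the paper itself leaves the statement unproved.
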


Simulations give dynamics as in Figure~\ref{fig:S-triangles}. 
Although {\lq}typical{\rq} orbits seem to be dense outside the two elliptic regions, 
the mechanism could be rather different from that of  Arnol'd diffusion in classical
smooth area preserving maps, see \cite{MR0163026}. 

Another difference with classical dynamical systems is that
invariant manifolds associated to a periodic orbit $\Gamma$ of the flow can be rather strange.
For a smooth system one expects that all orbits near $\Gamma$ approach
$\Gamma$ (either in forward or backward time) with some rate $\mu$
where $\mu$ is one of  the multipliers of the linearization of a first return map along $\Gamma$. 
So only a finite number of rates is possible. Here the situation is totally different:

\begin{mainexample2}
There exists an open set of matrices $(\M,A)\in LI^3\times L^3$
so that for the corresponding Hamiltonian vector field there
exists a periodic orbit $\Gamma$ so that
the stable manifold $W^\tau(\Gamma)$ of speed $\tau$ is non-empty for 
{\em each} $\tau\approx 0$.
\end{mainexample2}

So for {\em each} $\tau\approx 0$ there exist an initial condition $(p,q)$ so that 
$$d(\phi_t(p,q),\Gamma)\approx e^{\tau t} \quad   \left\{ \begin{array}{ll}
\mbox{  as }t\to \infty & \mbox{ when }\tau<0,\\ 
\mbox{  as }t\to -\infty &\mbox{ when }\tau>0.\end{array} \right.
$$
In Section~\ref{sec:randomwalk} we will explain this in more detail, 
and relate it to random-walk like behaviour. 

\subsection{Relationship to game theory}

In the last section of this paper we will show how the previous set-up is
related to game theory. However, let us  state here already the following:

\begin{2ndmaintheorem} There exists a subset $W\subset LI^n$
which is open and dense and has full Lebesgue, so that for each pair
of $n\times n$ matrix
$\M\in W$ the following holds:
\begin{enumerate}
\item $(\bp,\bq)\in \Sigma\times \Sigma$  is the unique Nash equilibrium 
of the zero-sum game best response dynamics 
\begin{equation}
\dfrac{dp}{dt}\in \BR_p(q)-p, \dfrac{dq}{dt}\in \BR_q(p) -q. \label{eq:brorig}
\end{equation}
associated  to $\M$. Here
$\BR_p(q):=\argmax_{p\in \Sigma}  p'\M q$ and 
$BR_q(p):=\argmin_{q\in \Sigma} \, p' \M q$.
\item The flow $\phi_t$ associated to this differential inclusion exists, is unique and 
continuous outside $(\bp,\bq)$.
\item For each $\rho>0$ small, 
each half-ray through $(\bp,\bq)$ intersects $H^{-1}(\rho)$ in a unique point.
Let $\pi\colon (\Sigma\times \Sigma)\setminus \{(\bp,\bq)\}\to H^{-1}(\rho)$
be the corresponding continuous map. 
Then $\pi\circ \phi_t$ is the flow on $H^{-1}(\rho)$
of a Hamiltonian vector field corresponding to $H$ and the symplectic form $\sum_{ij}a_{ij}dp_i\wedge dq_j$ where
$(a_{ij})$ are the coefficient of the matrix $A:=\M$. 
\end{enumerate}
\end{2ndmaintheorem}

In fact, we will prove this result for a more general differential inclusion, see equation (\ref{eq:genBR}).


\subsection{Relationship with other papers}
Before describing these results in more detail,  
let us give a bit of background to the result
of this paper and how it relates to the literature.

This study grew out of research into the best response dynamics (\ref{eq:brorig}) associated to 
two player games. This differential inclusion, or rather
the differential inclusion 
 \begin{equation}
\dfrac{dp}{dt}\in \dfrac{1}{t}(\BR_p(q)-p), \dfrac{dq}{dt}\in \dfrac{1}{t}(\BR_q(p) -q). \label{eq:fporig}
\end{equation}
was introduced in the late 1940's by Brown \cite{Brown49} to describe a mechanism 
in which two players could {\lq}learn to play a Nash equilibrium{\rq}, and since then usually 
is  referred to as {\em fictitious play dynamics}.
Note that the orbits of (\ref{eq:brorig}) and (\ref{eq:fporig}) are the same up to time reparametrization
and that  their right hand side is piecewise affine (and multivalued in places).
It the early 50's Robinson~\cite{Robinson51} showed 
that the solutions of these differential inclusions converge
to the set of Nash equilibria of the game.  One can also define
the corresponding differential inclusion for non-zero games,  see Section~\ref{sec:games}. 
However,  in the 60's Shapley \cite{Shapley64} showed that 
in that case these equations can have periodic attractors. Shapley's example is extremely
well-known in the very extensive literature on fictitious games, for references
see for example \cite{SSH08} and for a discussion on the relationship of fictitious play
and learning, see \cite{Fudenberg-Levine98}.
In the first of a sequence of papers  (\cite{SSH08} and \cite{SS09}), 
we considered a one-parameter family of games, which includes Shapley's
classical example, and analysed how Shapley's periodic orbit bifurcates and how eventually
other `simple' periodic orbits are created 
In the second paper  
we study the dynamics
of these games in much more detail, and how one can have `chaotic choice of strategies' for the players.
One of the ideas in that paper is to study the dynamics induced by
projecting orbits onto the boundary of the space. 
In numerical studies, we observed (many years ago) that,
in the zero-sum case, this induced dynamics behaves similarly to that of an area preserving flow. The
present paper explains this observation.

In \cite{SS09} we observed 
that the  transition map associated to 
differential inclusion  between hyperplanes is a composition of piecewise projective maps.
In fact, as we show in this paper in the present case   the transition map 
is continuous, volume preserving and  piecewise affine.  In particular, we obtain a family of 
continuous, piecewise affine,
area preserving  maps of a polygon in the plane with rather interesting dynamics. 
This connects this paper with an exciting body of work on
piecewise isometries (with papers by R. Adler, P.Ashwin,  M. Boshernitzan, A. Goetz, B. Kichens, 
T. Nowicki,  A. Quas, C. Tresser and many others). Most of these papers deal with
piecewise continuous maps, while the maps we encounter are continuous. 

Another loose connection of our work is to that of the huge and very active field
of translation flows (associated to interval exchange transformations, translation surfaces
and Teichm\"uller flows) (with recent papers by A. Avila, Y. Cheung, A. Eskin, G. Forni, P. Hubert, 
H. Masur, C. McMullen,  M. Viana, J-C. Yoccoz, A. Zorich and many many others). But of course
our flow does not act  on a surface with a hyperbolic metric, and so this connection seems not very
helpful. 

\subsection{Relationship with the literature on non-smooth dynamical systems}
We should point out that there are many results on nonsmooth dynamical systems. 
Most of these 
are motivated by mechanical systems with {\lq}dry friction{\rq}, {\lq}sliding{\rq}, {\lq}impact{\rq} and so on.
As the number of workers in this field is enormous,  we just refer to the recent survey of M. di Bernardo et al 
\cite{Bernardo_etal09} and the monograph by M. Kunze \cite{Kunze2000}.  Of course our paper is very much related to this work, although
the motivation and the result seem to be of a different nature from what can be found in those papers.  

What our results have in common with many of the models in this literature, is that we are dealing
with a differential inclusion 
$x'\in f(x)$ where $f(x)$ is discontinuous and multi-valued in some hyperplanes. 
Our paper
deals with  a situation in which we also study differential inclusions but
for which the flow exists, is unique and continuous. 
 Moreover, the global dynamics around certain periodic orbits can be extremely complicated (much more so than would be possible in the smooth case). This behaviour is described in Theorem~\ref{thmB}. 
 We believe that this behaviour has not been observed before. 
%
%

\subsection{The organization of this paper}  
In Section 2 and 3 we discuss why level sets of $H$ are spheres 
and how to compute the Hamiltonian vector field $X_H$.
In Section 4 we show that a related (set-valued) vector field  has a 
continuous flow. In Section 5, we show this implies that the flow
of the original Hamiltonian vector field is continuous. In Section~\ref{sec:example} we will discuss
some examples. Finally, in Section 8 we relate these results to 
dynamics associated to  game theory (the so-called best response
and fictitious play dynamics).
We should emphasize that this paper does not include the proofs related
to the examples for which random walk behaviour is shown. We describe
these results briefly in Section~\ref{sec:example}, but for a detailed analysis  see  \cite{SS09}.

This paper requires no knowledge of game theory.

\subsection{Notation and terminology}
If $b_1,\dots,b_k\in \R^n$ then we will denote
by $<b_1,\dots,b_k>$ the space spanned by these vectors
and $[b_1,\dots,b_k]$ the space of convex combinations of these points. 
We also sometimes refer to the {\em subspace} associated to  $[b_1,\dots,b_k]$:
this is the smallest affine space containing $[b_1,\dots,b_k]$. 
All vectors we consider are column vectors. If $p\in \R^n$
then $p'$ always denotes the corresponding row vector.
The transpose of a matrix $M$ is denoted by $M'$. 


\section{$H$ and the best response functions}
\label{sec:ThmsHamilton}
Let $n,m$ be positive integers and consider $H\colon \Sigma_p\times \Sigma_q\to \R$  
of the form
\begin{equation}
H(p,q)= \max_{p\in \Sigma}  p' \M  q -  \min_{q\in \Sigma} p'  \M q,
\label{eq:hamilton}
\end{equation} 
where $\Sigma_p$ and $\Sigma_q$ are the set of probability vectors  in $\R^m$ resp. $\R^n$ 
and
$\M$ is a $m\times n$ matrix. 
We will denote by $e_1^p,e_2^p,\dots,e_m^p$  the unit vectors
in $\Sigma_p$ and by $e_1^q,e_2^q,\dots,e_n^q$ the unit vectors
in $\Sigma_q$.  Often we will consider the case that $m=n$ and then write $\Sigma=\Sigma_p=\Sigma_q$.
We will denote the subspace spanned by $\Sigma_p$ and $\Sigma_q$ by 
$\hat \Sigma_p$ and $\hat \Sigma_q$ and their tangent spaces by  $T\Sigma_p$ and $T\Sigma_q$.

One can also write
\begin{equation}
H(p,q)=(\BR_p(q))' \M  q- p' \M \BR_q(p). \label{eq:rewriteH}
\end{equation}
where $\BR_p\colon \R^n \to \Sigma_p$ and $\BR_q\colon \R^m \to \Sigma_q$
are defined by
\begin{equation}
\BR_p(q):=\argmax_{p\in \Sigma_p}  p'\M q \,\, 
\mbox{ and }\,\,  \BR_q(p):=\argmin_{q\in \Sigma_q} \, p' \M q .
\label{eq:brpq}
\end{equation}
Due to their game-theoretic interpretation, which we will discuss in Section~\ref{sec:games}, 
$\BR_p$ and $\BR_q$ are called {\em best response functions}. 
Note that  $\BR_p$ and $\BR_q$ are in actual fact set-valued, because 
$\BR_p(q)=[e_{i_1}^p,\dots,e_{i_k}^p]$ iff $i_1,\dots,i_k$
are the largest components of $\M q$. Similarly,   $\BR_q(p)$ 
is equal to the convex combination of unit vectors corresponding
to the smallest components of $p \M$. It follows that 
$$\R^n \ni q\mapsto \BR_p(q)\in \Sigma_p\mbox{ and }\R^m \ni p\mapsto \BR_q(p)\in \Sigma_q$$ are set-valued
and upper semi-continuous.  
For example, when $m=n=2$ and $\M$ is the $2\times 2$ identity matrix, then we can 
write the probability vectors $p,q$ in the form $p=(p_1,1-p_1)$ and $q=(q_1,1-q_1)$,
identify $\Sigma\times \Sigma=[0,1]\times [0,1]$ and we get
$H(p,q)=\max(|q_1|,|q_2|)-\min(|p_1|,|p_2|)$. Moreover $\BR_p(q)=e_1^p$ when $q_1>1/2$, 
$\BR_p(q)=e_2^p$ when $q_1<1/2$ and $\BR_p(q)=\Sigma_p$ when $q_1=1/2$. 
When $m=n=3$ and $\M$ is the $3\times 3$ identity matrix, 
then $H(p,q)= \max_i \,q_i -  \min_j \, p_j$ and $\BR_p$ and $\BR_q$ are shown in Figure~\ref{fig:levelsets} (on the right). 

\begin{figure}[htp]
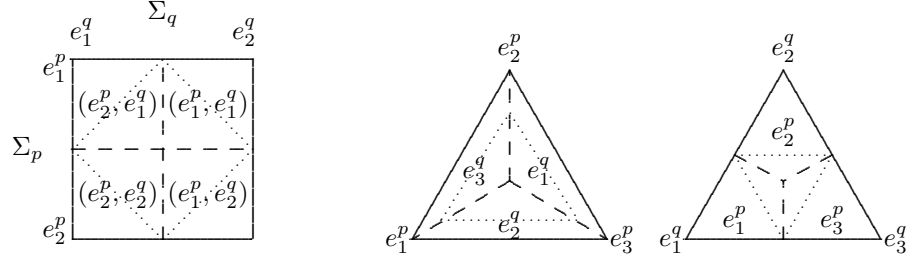
 \hfil
\beginpicture
\dimen0=0.12cm
\dimen1=0.13cm
\setcoordinatesystem units <\dimen0,\dimen0> point at 30 0 
\setplotarea x from 0 to 20, y from -2 to 20
\setlinear
\plot 0 0 20 0 20 20 0 20 0 0 /
\setdashes
\plot 10 0 10 20 /
\plot 0 10 20 10 /
\setdots <1mm>
\plot 0 10 10 0 20 10 10 20 0 10 / 
\put{$\Sigma_q$} at 10 25
\put{$\Sigma_p$} at -5 10
\put{$e_1^q$} at 1 23
\put{$e_2^q$} at 19 23
\put{$e_2^p$} at -2 1
\put{$e_1^p$} at -2 19
\put{$(e_2^p,e_2^q)$} at 5 5 
\put{$(e_1^p,e_2^q)$} at 15 5 
\put{$(e_1^p,e_1^q)$} at 15 15 
\put{$(e_2^p,e_1^q)$} at 5 15 
\setcoordinatesystem units <\dimen1,\dimen1> point at -35 0 
\setplotarea x from 0 to 20, y from -2 to 20
\setsolid \setlinear
\plot 0 0 20 0 /
\plot 0 0 10 17.3 20 0 /
\setdashes
\plot 10 0 10 6 /
\plot 5 8.66 10 6 /
\plot 15 8.66 10 6 /
\setdots <1mm>
\plot 10 0 5 8.66 15 8.66 10 0 /
\setsolid
\put {$e^q_1$} at -1.5 0
\put {$e^q_2$} at 10 19.6
\put {$e^q_3$} at 21.5 0
\put {{$e_1^p$}} at 5 2
\put {{$e_3^p$}} at 15 2
\put {{$e_2^p$}} at 10 11
\setcoordinatesystem units <\dimen1,\dimen1> point at -7 0
\setplotarea x from 0 to 30, y from -2 to 20
\setsolid
\setlinear
\plot 0 0 20 0 /
\plot 0 0 10 17.3 20 0 /
\setdashes
\plot 10 6 0 0 /
\plot 10 6 20 0 /
\plot 10 6 10 17.3 /
\setdots <1mm>
\plot 3 2   17 2   10 13 3 2 /
\put {$e^p_1$} at -1.5 0
\put {$e^p_2$} at 10 19.6
\put {$e^p_3$} at 21.5 0
\put {{$e_2^q$}} at 10 1.5
\put {{$e_1^q$}} at 13 7
\put {{$e_3^q$}} at 6.4 7
\endpicture
\caption{\label{fig:levelsets}
{\small On the left a level set of $H$ (drawn in dots), where we consider the case that $n=2$, 
take $\M$ the $2\times 2$ identity matrix and identify $\Sigma\times \Sigma=[0,1]\times [0,1]$.
The values of $(\BR_q(p),\BR_p(q))$ are also shown.
In the right two figures the case where $m=n=3$, $\M=id$ is considered: 
level sets of $\max |q_i|$ and $\min|p_i|$ are drawn inside the simplexes $\Sigma_p$ and $\Sigma_q$. 
The values of $\BR_q(p)$ and $\BR_p(q)$ are also shown.}}
\end{figure}

\begin{definition}[Nash equilibrium]
We call $(\bp,\bq)\in \Sigma_p\times \Sigma_q$ a {\em Nash equilibrium} if 
$$\bp\in \BR_p(\bq)\mbox{ and }\bq\in \BR_q(\bp).$$
\end{definition}

It is well-known that associated to each matrix $\M$ there exists
at least one Nash equilibrium, see for example \cite[Proposition 20.3]{MR1301776}.

\begin{prop}
$H$ is continuous, 
$H\ge 0$ and $H(p,q)=0$ iff $(p,q)$ is a Nash equilibrium.
Moreover,  if $(\bp,\bq)$ is the only Nash equilibrium and all components
of $\bp,\bq$ are strictly positive then
 $H^{-1}(\rho)$ is a $(m+n-3)$-dimensional sphere in $\Sigma_p\times \Sigma_q$ for any $\rho>0$ small
and there exists a continuous map 
\begin{equation}
\pi\colon \Sigma_p\times \Sigma_q\setminus \{(\bp,\bq)\} \to H^{-1}(\rho)
\label{eq:pi}
\end{equation}
which maps  any point on a half-ray $l_+$ through $(\bp,\bq)$ to 
the point $l_+\cap H^{-1}(\rho)$. 
\end{prop}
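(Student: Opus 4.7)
The first two assertions are routine. Continuity of $H$ is immediate because $H(p,q)=\max_i(\M q)_i-\min_j(p'\M)_j$ is the difference of a pointwise max and a pointwise min of finitely many affine functions of $(p,q)$. The bound $H\ge 0$ and the characterization of its zero set follow from the definitions: for every $(p,q)\in \Sigma_p\times \Sigma_q$ one has $\max_p p'\M q\ge p'\M q\ge \min_q p'\M q$, so $H(p,q)\ge 0$, with equality iff $p\in \argmax p'\M q=\BR_p(q)$ and $q\in \argmin p'\M q=\BR_q(p)$, i.e., iff $(p,q)$ is a Nash equilibrium.

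For the sphere statement I would exploit three facts. First, $H$ is convex: writing $H(p,q)=\max_i(\M q)_i+\max_j(-(p'\M)_j)$ shows it as a sum of pointwise maxima of affine functions. Second, by the uniqueness hypothesis $H^{-1}(0)=\{(\bp,\bq)\}$, so continuity of $H$ together with compactness of $\Sigma_p\times \Sigma_q$ forces $H^{-1}([0,\rho])$ to lie inside any prescribed neighborhood of $(\bp,\bq)$ provided $\rho>0$ is small enough. Third, strict positivity of $\bp,\bq$ places $(\bp,\bq)$ in the relative interior of $\Sigma_p\times \Sigma_q$, so for small $\rho$ the sublevel set $H^{-1}([0,\rho])$ sits inside this relative interior. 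Hence $H^{-1}([0,\rho])$ is a compact convex set with $(\bp,\bq)$ in its interior, and once I show each half-ray from $(\bp,\bq)$ meets $H^{-1}(\rho)$ in a unique point, radial projection identifies its boundary $H^{-1}(\rho)$ with the sphere of dimension $\dim(\Sigma_p\times \Sigma_q)-1=m+n-3$ and defines $\pi$ continuously.

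The crux is therefore strict monotonicity of $H$ along half-rays from $(\bp,\bq)$. Strict positivity of $\bp,\bq$, combined with $\bp\in \BR_p(\bq)$ and $\bq\in \BR_q(\bp)$, forces $\M\bq=\mu\ii$ and $\bp'\M=\lambda\ii$; the Nash condition $H(\bp,\bq)=\mu-\lambda=0$ gives $\lambda=\mu$. For $v\in T\Sigma_p$, $w\in T\Sigma_q$ and $s>0$ small enough that the active argmax/argmin indices do not change, a direct computation yields
$$H(\bp+sv,\bq+sw)=s\bigl(\max_i(\M w)_i-\min_j(v'\M)_j\bigr)=:s\,K(v,w),$$
so $K(v,w)$ is the right-directional derivative of $H$ at $(\bp,\bq)$, and $K(v,w)\ge 0$ since $H\ge H(\bp,\bq)$. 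The main obstacle is the strict inequality $K(v,w)>0$ in every nonzero direction; I dispose of it by contradiction: if $K(v,w)=0$ for some $(v,w)\ne 0$, then $H\equiv 0$ on the initial segment $\{(\bp+sv,\bq+sw):s\in[0,s_0)\}$, so this whole segment consists of Nash equilibria, contradicting uniqueness. Convexity of $H$ now gives non-decreasing slope along each half-ray, so with initial slope $K(v,w)>0$ the restriction is strictly increasing; choosing $\rho$ smaller than $\min_{\partial(\Sigma_p\times \Sigma_q)} H$, which is positive by compactness and $H^{-1}(0)=\{(\bp,\bq)\}$, guarantees each ray crosses $H^{-1}(\rho)$ at exactly one point. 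Continuity of $\pi$ then follows from joint continuity of $H$ in $(s,v,w)$ together with strict monotonicity in $s$, by a standard inverse-function argument.
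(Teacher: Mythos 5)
Your proposal is correct, and the routine parts (continuity, nonnegativity, the characterization of the zero set, compactness forcing $H^{-1}(\rho)\subset\Sigma_p\times\Sigma_q$ for small $\rho$) match the paper. Where you diverge is the central point, strict monotonicity of $H$ along half-rays through $(\bp,\bq)$. The paper proves the scaling inequality
$$H\bigl(p+\lambda(p-\bp),\,q+\lambda(q-\bq)\bigr)\;\ge\;(1+\lambda)\,H(p,q),\qquad \lambda>0,$$
valid for \emph{any} Nash equilibrium $(\bp,\bq)$, by using the argmax/argmin property of $\BR_p,\BR_q$ term by term (e.g.\ $\BR_p(q+\lambda(q-\bq))'\M(q+\lambda(q-\bq))\ge(1+\lambda)\BR_p(q)'\M q-\lambda\bp'\M\bq$), and this one inequality simultaneously gives convexity of the zero set and, under the uniqueness hypothesis, strict increase along rays. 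You instead exploit the completely-mixed structure ($\M\bq=\mu\ii$, $\bp'\M=\lambda\ii$, $\lambda=\mu$) to get the \emph{exact} formula $H(\bp+sv,\bq+sw)=sK(v,w)$ along rays, and then rule out $K(v,w)=0$ by observing it would produce a segment of Nash equilibria. Both are sound; the paper's argument is a bit more general and self-contained (no strict-positivity hypothesis needed for the inequality, and it also delivers the convexity of the Nash set used elsewhere), while yours is more explicit and shows that $H$ is in fact \emph{exactly} linear, not merely superlinear, on rays from the completely mixed equilibrium. One small remark: since all entries of $\M\bq$ and of $\bp'\M$ are already tied, the identities $\max_i(\M(\bq+sw))_i=\mu+s\max_i(\M w)_i$ and $\min_j((\bp+sv)'\M)_j=\lambda+s\min_j(v'\M)_j$ hold for every $s>0$ keeping the point in the simplex; your qualifier about active indices not changing is unnecessary.
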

\begin{proof} Continuity of $H$ is clear from the definition.
Note that $\BR_p(q)' \M  q \ge p' \M q \ge p' \M  \BR_q(p)$ for all $(p,q)\in \Sigma_p\times \Sigma_q$ and so $H\ge 0$. 
Moreover,  $H(p,q)=0$ iff both equalities hold. This is equivalent to $p\in \BR_p(q)$ and $q\in \BR_q(p)$
and therefore to $(p,q)$ being a Nash equilibrium.
Now take some Nash equilibrium $(\bp,\bq)$ and 
take a half-ray $l_+$  through $(\bar p,\bar q)$.
The function $H$ is  increasing on $l_+$. 
Indeed, take $(p,q)\in l_+$ and let $\lambda>0$.  Then
\begin{equation*}
\begin{array}{rl}
&\BR_p (q+ \lambda(q-\bq))' \M  (q+\lambda(q-\bq)) 
\ge \BR_p(q)' \M  (q+\lambda(q-\bq)) \\
&\quad =(1+\lambda)\BR_p(q)' \M q - \lambda\BR_p(q)'\M \bq \ge (1+\lambda)\BR_p(q)' \M q - \lambda \BR_p(\bq)'\M \bq\\
 & \quad = (1+\lambda)\BR_p(q)' \M q - \lambda \bp'\M \bq .
\end{array}
\end{equation*}
Similarly, 
$$(p+\lambda(p-\bp))' \M  \BR_q(p+\lambda(p-\bp)) \le 
(1+\lambda) p' \M \BR_q(p) - \lambda \bp'\M \bq.$$
It follows that 
$H(p+\lambda(p-\bp),q+\lambda(q-\bq))\ge (1+\lambda) H(p,q)$
and that the zero set of $H$ is convex.
So if we assume that $H$ has precisely one zero (in the interior of $\Sigma_p\times \Sigma_q$)
then $H$ is strictly increasing on $l_+$ and the set $H^{-1}(\rho)$ is contained
in $\Sigma_p\times \Sigma_q$ for $\rho>0$ small. 
By considering the map which assigns to each such a half-ray $l_+$ 
the point   $H^{-1}(\rho)\cap l_+$ we obtain a homeomorphism between a sphere in $\R^m\times \R^n$
centered at $(\bp,\bq)$ and the set $H^{-1}(\rho)$.
\end{proof}

\begin{remark}
The reason we require $\rho>0$ to be small, is because we only defined $H$  
on $\Sigma_p\times \Sigma_q$.  We could also have defined $H$ on the 
hyperplanes containing $\Sigma_p, \Sigma_q$ and consider the set $H^{-1}(1)$, but then 
some points in $H^{-1}(1)$ could have components which are not strictly positive.
\end{remark}

In this paper we will mainly consider matrices $\M$ for which there exists a
completely mixed Nash equilibrium: 

\begin{definition}[Completely mixed Nash equilibrium]\label{def:compmixed}
We say that the pair $(\bar p,\bar q)$ is a {\em completely mixed Nash equilibrium} if  
$(\bp,\bq)$ is  a Nash equilibrium and 
all components of $\bar p$ and $\bar q$ 
are strictly positive.
\end{definition}

This definition agrees with the previous definition:

\begin{lemma}
If $(\bar p,\bar q)$ is a completely mixed Nash equilibrium then
there exists $\lambda,\mu\in \R $ so that  
$$\bar p'\M=\lambda \ii_q', \M \bar q=\mu \ii_p,$$
where $\ii_p=\left(1 \, 1 \, \hdots \, 1 \right)\in \R^m$
and $\ii_q=\left(1 \, 1 \, \hdots \, 1 \right)\in \R^n$.
Reversely, if all components of $\bp$ and $\bq$ are strictly positive
and $\bar p'\M=\lambda \ii_q'$, $\M \bar q=\mu \ii_p$ then
$(\bp,\bq)$ is a completely mixed Nash equilibrium. 
\end{lemma}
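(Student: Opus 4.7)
The plan is to reduce both directions to a standard fact about minima (resp.\ maxima) of a linear functional on a simplex: a linear function $q \mapsto c'q$ on $\Sigma_q$ attains its minimum at an interior point if and only if the coefficient vector $c$ is a constant multiple of $\ii_q$.

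For the forward direction, suppose $(\bar p,\bar q)$ is a completely mixed Nash equilibrium. By definition $\bar q \in \BR_q(\bar p) = \argmin_{q\in\Sigma_q} \bar p'\M q$. Writing $c_j := (\bar p'\M)_j$, the quantity $\bar p'\M q = \sum_j c_j q_j$ is a convex combination of the $c_j$, so its minimum over $\Sigma_q$ equals $\min_j c_j$ and is attained precisely on the face spanned by those $e_j^q$ with $c_j = \min_i c_i$. Since every component of $\bar q$ is strictly positive, $\bar q$ cannot lie in a proper face, so all $c_j$ must coincide; call this common value $\lambda$. This gives $\bar p'\M = \lambda\,\ii_q'$. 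The identity $\M\bar q = \mu\,\ii_p$ is proved in exactly the same way, applying the argument to the max in the definition of $\BR_p$.

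For the converse, suppose all components of $\bar p,\bar q$ are strictly positive and that $\bar p'\M = \lambda\,\ii_q'$ and $\M\bar q = \mu\,\ii_p$. Then for every $q\in\Sigma_q$, $\bar p'\M q = \lambda\, \ii_q' q = \lambda$, so the function $q\mapsto \bar p'\M q$ is constant on $\Sigma_q$ and hence its argmin is all of $\Sigma_q$, which in particular contains $\bar q$. Symmetrically, $p'\M\bar q = \mu$ for every $p\in\Sigma_p$, so $\bar p \in \argmax_{p\in\Sigma_p} p'\M\bar q = \BR_p(\bar q)$. Thus $(\bar p,\bar q)$ is a Nash equilibrium, which is completely mixed by the positivity hypothesis.

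There is essentially no obstacle; the only thing to be careful about is the direction of the inequalities (min on the $q$-side, max on the $p$-side) and keeping straight which vector of ones, $\ii_p\in\R^m$ or $\ii_q\in\R^n$, appears where. The argument is purely a consequence of the linearity of $p'\M q$ in each variable separately and of the characterization of vertices of $\Sigma_p,\Sigma_q$ as extreme points.
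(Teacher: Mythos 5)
Your proof is correct and takes essentially the same approach as the paper: both directions reduce to the observation that a linear functional on a simplex has an interior maximizer (resp.\ minimizer) if and only if its coefficient vector is a constant multiple of $\ii$. The paper states this in one compressed sentence (``$\bar p\in \BR_p(\bar q)$ and all components of $\bar p$ are strictly positive implies that all components of $\M \bar q$ are equal''), and dismisses the converse as obvious; you have simply written out the same argument in full.
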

\begin{proof} $\bar p\in \BR_p(\bar q)$ and all components
of $\bar p$ are strictly positive implies that all components of
$\M q$ are equal. 
This and the corresponding statement for $q$ implies the first part of the lemma.
The second part is obvious. 
\end{proof}

%

Assume the following:
\begin{assumption}[First transversality assumption]\label{eq:trans1} Every $r\times r$ minor of $\M$ is non-zero,   
$\forall r\ge 2$.
\end{assumption}
Here, as usual, we define a $r\times r$ {\em minor} of a matrix $A$ to be the determinant of a 
$r\times r$ matrix $B$ which is obtained by selecting only $r\le \min(m,n)$ of the rows and columns of $A$.
Note that if $n=m\ge 3$
then the $n\times n$ identity matrix does {\em not} satisfy the transversality condition.

\begin{prop}\label{prop:unique1}
If $\M$ satisfies the  transversality assumption (\ref{eq:trans1})
and if there exists a completely mixed Nash equilibrium
$(\bar p,\bar q)$, then $(\bp,\bq)$ is the unique Nash equilibrium.
\end{prop}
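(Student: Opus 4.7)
\medskip
\noindent\emph{Plan.} I would show in three moves that any Nash equilibrium $(p,q)$ must coincide with $(\bp,\bq)$: first, extract the value of the game from the completely mixed equilibrium; second, exploit the strict positivity of $\bp$ (resp.\ $\bq$) to force $\M q=\M\bq$ (resp.\ $p'\M=\bp'\M$); finally, invert $\M$ via the transversality assumption.

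For the first move, the preceding lemma gives $\bp'\M=\lambda\ii_q'$ and $\M\bq=\mu\ii_p$, and pairing with $\bq$, $\bp$ (which lie in their simplices) yields $\lambda=\bp'\M\bq=\mu$. For an arbitrary Nash equilibrium $(p,q)$, since $p\in\BR_p(q)$ and $q\in\BR_q(p)$, I would sandwich
\[
\lambda=\bp'\M q\le p'\M q\le p'\M\bq=\mu=\lambda,
\]
so $p'\M q=\lambda$. The defining property $p\in\BR_p(q)$ identifies this common value with $\max_i(\M q)_i$, hence $(\M q)_i\le\lambda$ for every $i$.

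For the second move, I would rewrite $\bp'\M q=\lambda$ as $\sum_i\bp_i(\M q)_i=\lambda$. Since every $\bp_i>0$ and each summand is bounded above by $\bp_i\lambda$, the equality forces $(\M q)_i=\lambda$ for all $i$, that is, $\M q=\lambda\ii_p=\M\bq$. The symmetric argument (using that $q\in\BR_q(p)$ identifies $p'\M q$ with $\min_j(p'\M)_j$, together with the positivity of $\bq$) gives $p'\M=\bp'\M$.

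For the third move, the transversality assumption at the maximal value of $r$ gives $\det\M\ne 0$ in the square case $m=n$ (which is the setting of interest; the case $n=1$ is vacuous), so the two identities $\M q=\M\bq$ and $p'\M=\bp'\M$ force $q=\bq$ and $p=\bp$. The most substantive step is the second: the passage from equality at the maximum to equality across all coordinates is precisely where the hypothesis that no coordinate of $\bp$ vanishes is indispensable. Without it, one could only conclude $(\M q)_i=\lambda$ on the support of $\bp$, which would not pin down $q$.
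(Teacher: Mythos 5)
Your proof is correct and rests on the same three ingredients as the paper's: the characterization $\bp'\M=\lambda\ii'_q$, $\M\bq=\mu\ii_p$ from the preceding lemma; the strict positivity of $\bp$, $\bq$ to upgrade "weighted average equals max (resp.\ min)" to "all coordinates equal"; and the invertibility of $\M$ from Assumption~\ref{eq:trans1}. The paper organizes this as "$q\neq\bq$ forces $\BR_p(q)'\M q>\bp'\M\bq$, hence $H(p,q)>0$" whereas you argue directly that any Nash equilibrium satisfies $\M q=\M\bq$ and $p'\M=\bp'\M$, but these are the same observation up to contraposition, so this is essentially the paper's argument in a somewhat tidier arrangement.
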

\begin{proof}
By the previous lemma, $\bp' \M=\lambda \ii_q$. By the  transversality assumption (\ref{eq:trans1}) for each $\lambda$
there exists a unique $\bar p$ satisfying this equation.
Since $\bp$ is a probability vector, this uniquely determines $\bp$
(among all completely  mixed Nash equilibria). Similarly, all components of $\M \bar q$ are equal,
and again $\bar q$ is uniquely determined (among all completely  mixed Nash equilibria).
On the other hand,  if not all coordinates of $\M q$ are equal then $\BR_p(q) \M q > \bp \M  q =\bp \M \bq$
(here $>$ holds because $\BR_p(q)$ necessarily puts no weight on some of the coordinates
since not all coordinates of $\M q$ are equal). Similarly, if 
not all coordinates of  $p \M$ are equal then $p \M \BR_q(p) < p \M \bq =\bp \M \bq$. 
Hence  $(p,q)\ne  (\bp,\bq)$ implies $H(p,q)>0$ and therefore by the
previous proposition that $(p,q)$ is not a Nash equilibrium.
\end{proof}

In some of the theorems we shall need to make an additional transversality assumption:

\begin{assumption}[Second transversality assumption]  \label{eq:trans2}
Every minor of $\MS$ is non-zero. Here $\MS$  is any   $(m-1)\times n$ (resp. $n\times (n-1)$) matrix
obtained by subtracting some row of $\M$ from all other rows of $\M$ 
(resp. some column from all other columns).
\end{assumption}

Note that this assumption requires in particular that all coefficients of the matrices $\MS$ 
are non-zero, and hence all coefficient within each row (and each column) 
vector of $\M$ are distinct.

We claim that this implies that certain spaces are in general position.

\begin{definition}[General Position]
Let $H^1,H^2$ be two affine subspaces of an affine space $\Sigma$.  We say that they are in {\em general position} 
w.r.t. $\Sigma$ if 
$$\mbox{ either }H^1\cap H^2=\emptyset\mbox{ or }H^1+H^2=\Sigma$$ 
or equivalently if
$$\mbox{ either }H^1\cap H^2=\emptyset \mbox{ or }\codim(H^1\cap H^2)=\codim(H^1)+\codim(H^2).$$
\end{definition}

\medskip

Note that from thie
definition  $\codim(H^1)+\codim(H^2)>\dim(\Sigma)$ implies $H^1\cap H^2=\emptyset$.
For example, two lines in $\R^3$ (and similarly  a point and a plane in $\R^3$)
are in general position w.r.t. $\R^3$  iff they do not meet.
The equivalence of the two definitions above holds because 
$\dim(H_1+H_2)$ is equal to \vskip -0.7cm
$$\dim(H_1)+\dim(H_2)-\dim(H_1\cap H_2)=\dim(\Sigma)-  
\left(\codim(H^1)+\codim(H^2)
 -\codim(H^1\cap H^2)\right).$$ 

\medskip

\begin{prop}\label{prop:unique2}  Assume the  transversality assumptions (\ref{eq:trans1}) and (\ref{eq:trans2}) are satisfied. 
Then 
\begin{enumerate}
\item $H$ is piecewise affine and
$\BR_p(q)$ and
$\BR_q(p)$  
 are constant outside codimension-one planes.
\item \label{itemb} Take any $1\le j_1<\dots<j_k\le n$
and consider the subspace $Z_p(j_1,\dots,j_k)\subset \Sigma_p$ 
of the set of $p\in \Sigma_p$ 
where $\BR_q(p)=[e^q_{j_1},\dots,e^q_{j_k}]$.
Moreover consider an  $r$-dimensional face
$[e_{i_1},\dots,e_{i_r}]$ of $\Sigma_p$.
Then the linear spaces spanned by $Z_p(j_1,\dots,j_k)$ and by $[e_{i_1},\dots,e_{i_r}]$
are in general position as subspaces of $\hat \Sigma_p$.
A similar statement holds when the roles of $p$ and $q$ are interchanged.
\item Suppose that $(\bp,\bq)$ is a Nash equilibrium. Then there exist $1\le r\le \min(m,n)$, 
$i_1,\dots,i_r\in \{1,\dots,m\}$
and $j_1,\dots,j_r\in \{1,\dots,n\}$ so that
$$\bp\in  [e_{i_1}^p,\dots,e_{i_r}^p]\subset \Sigma_p\mbox{ and }
\bq\in  [e_{j_1}^q,\dots,e_{j_r}^q] \subset \Sigma_q$$
and so that moreover the $i_1,i_2,\dots,i_r$ components of $\bp$ and 
the $j_1,j_2,\dots,j_r$ coordinates of $\bq$ 
are non-zero. So $(\bp,\bq)$ is a completely  mixed Nash equilibrium w.r.t.
to this $r\times r$ submatrix
and  
$$\BR_p(\bq)=[e^p_{i_1},\dots,e^p_{i_r}]\mbox{ and }
\BR_q(\bp)=[e^q_{j_1},\dots,e^q_{j_r}].$$
\item The Nash equilibrium $(\bp,\bq)$ of the game
is unique.
\end{enumerate}
\end{prop}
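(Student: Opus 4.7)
The plan is to address the four items sequentially, using items (1)--(3) as building blocks for the uniqueness statement in (4).

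For item (1), I note that $\BR_p(q)$ is determined by the set of indices where $(\M q)_i$ attains its maximum, and $\BR_q(p)$ by the set where $(p'\M)_j$ attains its minimum. Both selections can only change across the finitely many hyperplanes $\{(\M q)_i=(\M q)_j\}$ and $\{(p'\M)_i=(p'\M)_j\}$. On each complementary component $\BR_p$ and $\BR_q$ are constant (in fact a single vertex), and then (\ref{eq:rewriteH}) exhibits $H$ as the difference of two affine functions.

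For item (2), the affine span of $Z_p(j_1,\dots,j_k)$ inside $\hat\Sigma_p$ is cut out by the $k-1$ equations $(p'\M)_{j_1}=(p'\M)_{j_\ell}$, whose coefficient vectors are differences of columns of $\M$, i.e.\ columns of an $\MS$ of column-difference type. Intersecting with the linear span of the face $[e^p_{i_1},\dots,e^p_{i_r}]$ amounts to keeping only the rows $i_1,\dots,i_r$ of those coefficient vectors, giving an $r\times(k-1)$ submatrix of $\MS$. By assumption (\ref{eq:trans2}) all its minors are non-zero, so this submatrix has the maximal rank $\min(k-1,r)$, which gives general position by a direct codimension count: $(k-1)+(m-r)$ either equals the codimension of the intersection, or exceeds $\dim\hat\Sigma_p=m-1$ and forces the intersection to be empty.

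For item (3), let $(\bp,\bq)$ be any Nash equilibrium, and write $I=\mathrm{supp}\,\bp$, $J=\mathrm{supp}\,\bq$ with $s=|I|$, $t=|J|$. Since $\bp\in\BR_p(\bq)$ and $\bq\in\BR_q(\bp)$, the full index sets $I^*\supset I$ on which $(\M\bq)_i$ is maximal and $J^*\supset J$ on which $(\bp'\M)_j$ is minimal satisfy $\bp\in[e^p_i:i\in I]\cap Z_p(J^*)$. Applying item (2) to these two subspaces, general position forces $(m-s)+(|J^*|-1)\le m-1$, i.e.\ $|J^*|\le s$, hence $t\le|J^*|\le s$. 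The symmetric argument in $\Sigma_q$ gives $s\le t$, so $s=t=|I^*|=|J^*|$; in particular $I=I^*$ and $J=J^*$. Restricting $\bp,\bq$ to their supports then yields a completely mixed Nash equilibrium of the $s\times s$ submatrix $\M|_{I,J}$.

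Item (4) is the main obstacle. My plan is to invoke the classical zero-sum product structure of Nash equilibria: if $(\bp_1,\bq_1)$ and $(\bp_2,\bq_2)$ are both Nash equilibria then so are the cross-pairs $(\bp_1,\bq_2)$ and $(\bp_2,\bq_1)$, a direct consequence of the fact that in a zero-sum game each coordinate of an equilibrium is an optimal max-min/min-max strategy and the value is unique. Applying item (3) to all four pairs, the supports $I_1,I_2,J_1,J_2$ all have the same size $r$. The vector $\bp_1|_{I_1}\in\R^r$ must then satisfy both $(\bp_1|_{I_1})'\M|_{I_1,J_1}=\lambda_1\ii$ and $(\bp_1|_{I_1})'\M|_{I_1,J_2}=\lambda_{12}\ii$. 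If $J_1\ne J_2$, rewriting these as constancy across $J_1$ and across $J_2$ gives $2(r-1)$ homogeneous linear equations together with normalization on only $r$ unknowns; the coefficient matrix is built from columns of $\MS$ indexed by the rows $I_1$, and (\ref{eq:trans2}) forbids the required rank drop (the case $r=1$ being the statement that no two entries in a row of $\M$ are equal). Hence $J_1=J_2$; symmetrically $I_1=I_2$, and Proposition \ref{prop:unique1} applied to the common $r\times r$ submatrix then yields $(\bp_1,\bq_1)=(\bp_2,\bq_2)$. The delicate step is identifying precisely which submatrix of $\MS$ encodes the overdetermined system and verifying that (\ref{eq:trans2}) really makes it inconsistent for $J_1\ne J_2$.
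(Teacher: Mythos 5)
Items (1)--(3) follow essentially the same lines as the paper's proof: (2) rests on the observation that the affine span of $Z_p(j_1,\dots,j_k)$ is cut out by $k-1$ column-difference equations, whose restriction to the coordinates $i_1,\dots,i_r$ is a submatrix of $\MS$ with full rank by (\ref{eq:trans2}); (3) plays this codimension count off against the inclusion of the supports of $\bp,\bq$ in the respective best-response sets. Item (4) is where you take a genuinely different route. The paper invokes convexity of the zero-sum equilibrium set: a strict convex combination of two distinct equilibria is again an equilibrium, and by item (3) it is completely mixed on a subgame whose support is the union of the two original supports; one then plays this off against Proposition~\ref{prop:unique1} on that subgame. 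You instead invoke exchangeability: the cross-pairs $(\bp_1,\bq_2)$ and $(\bp_2,\bq_1)$ are again equilibria. Both are standard consequences of the product structure of the zero-sum equilibrium set, and your route is correct. What you do not need, however, is the overdetermined-system argument that you flag at the end as delicate. Once you know $(\bp_1,\bq_2)$ is an equilibrium, item (3) applied to it identifies the support $J_2$ of $\bq_2$ with the minimizing index set $\BR_q(\bp_1)$, while item (3) applied to $(\bp_1,\bq_1)$ identifies $J_1$ with the same set $\BR_q(\bp_1)$; since $\BR_q(\bp_1)$ is a single set determined by $\bp_1$ alone, $J_1=J_2$ is immediate, and symmetrically $I_1=I_2$. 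There is no linear system to inspect and no further appeal to (\ref{eq:trans2}) is required at this stage. Proposition~\ref{prop:unique1} applied to the common $r\times r$ submatrix --- which inherits (\ref{eq:trans1}) because every minor of a submatrix of $\M$ is a minor of $\M$ --- then gives $(\bp_1,\bq_1)=(\bp_2,\bq_2)$. Pruned of the redundant step, your version of item (4) is in fact somewhat cleaner than the paper's.
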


\begin{proof} If  $(p,q)\in \Sigma_p\times \Sigma_q$ is so that
$\BR_p(q)$ and $\BR_q(p)$ are both  singletons,
then one component of $\M q$ is strictly larger than the others, 
and one component of $p \M$ is strictly smaller than the others.
Hence there exists a neighbourhood $U$ of $(p,q)$ and $i \in \{1,2,\dots,m\}, j\in \{1,\dots,n\}$ so that $\BR_p(q)=e_i^p$ and $\BR_q(p)=e_j^q$
for all $(p,q)\in U$. Because of (\ref{eq:rewriteH}) the function $H$ is then affine on this neighbourhood $U$. 
As we will show next, the transversality 
assumption (\ref{eq:trans2}) implies that the set of $q\in \Sigma$, for which $q\mapsto \BR_p(q)$ is multi-valued,
is a codimension-one hyperplane in $\Sigma$ (and similarly for $p\mapsto \BR_q(p)$). 

Consider a linear space $Z_p\subset {\mathbb R}^m$
of $p\in \R^m$ where $\BR_q(p)=[e^q_{j_1},\dots,e^q_{j_k}]$
where $j_1,j_2,\dots,j_k\in \{1,\dots,n\}$.
This space is equal to the set of $p\in \R^m$ where $p' \MS=0$,
where $\MS$ is the $(k-1)\times n$ matrix made up from 
the difference of the $j_2,\dots,j_k$-th and the $j_1$-th column vector of the matrix $\M$. 
By  considering the matrix $\MB$ which consists
of only the $i_1,\dots,i_r$-th rows of $\MS$, we get the intersection of $Z_p$  with 
$[e_{i_1}^p,\dots,e_{i_r}^p]$. Since, by assumption (\ref{eq:trans2})
the matrix $\MS$ has maximal rank, it follows that this intersection
has codimension $(k-1)$ in $[e_{i_1}^p,\dots,e_{i_r}^p]$.
So if $r<k-1$ then the intersection is empty. The intersection will be also empty when the
intersection of $Z_p$ with the linear space spanned by $[e_{i_1}^p,\dots,e_{i_r}^p]$
is outside  $\Sigma_p$.

Let $(\bp,\bq)$ be a Nash equilibrium and assume $\BR_q(\bp)$ has dimension 
$k$  and $\BR_p(\bq)$ has dimension $l$.  To be definite assume that $k\le l$.
Let $\BR_q(\bp)=[e^q_{j_1},\dots,e^q_{j_k}]$ 
where $j_1,\dots,j_k \in \{1,\dots,n\}$. Note that by the first part of the lemma we have that $k\le m$. 
Moreover,  $\bq\in \BR_q(\bp)=[e^q_{j_1},\dots,e^q_{j_k}]$.
But using the first part of the lemma again it follows that
$\BR_p(\bq)$ can have at most dimension $k$ (and dimension $<k$ 
if one of the $j_1,\dots,j_k$ components of $\bq$ is zero). 
Since we assumed that $k\le l$ we get that $k=l$ and that the 
$j_1,\dots,j_k$-th components of  $\bq$ are strictly positive (and the others zero). 
Similarly, the $j_1,\dots,j_k$-th components of  $\bp$  are strictly positive (and the
others zero).

Using Proposition~\ref{prop:unique1} the Nash equilibrium $(\bp,\bq)$ is unique within this $k\times k$
sub matrix. If the game has another Nash equilibrium, then a convex combination
of these is also a Nash equilibrium and so there would be a completely mixed Nash equilibrium
in a $k'\times k'$ subgame with $k'> k$ (containing the $k\times k$ subgame from before).
But this contradicts the uniqueness of completely mixed Nash equilibria we already established.
\end{proof}

\section{An associated Hamiltonian vector field}

Let us associate a Hamiltonian vector field $X_H$  to $H$ when $m=n$.
We will denote by $T\Sigma$ the tangent plane to $\Sigma$, i.e.
the set of vectors in $\R^n$ whose components sum up to zero. 
As we will see in the next section, even though $X_H$ is not continuous and 
multivalued, it does have a continuous flow.

\begin{theorem}
\label{thmA}
Assume that $\M$ is an $n\times n$ matrix that satisfies the  transversality assumption
 (\ref{eq:trans2}) and let $H\colon \Sigma\times \Sigma\to \R$
be as in (\ref{eq:hamilton}).  Moreover,  take the symplectic two-form
$\omega=\sum_{ij} a_{ij} dp_i \wedge dq_j$ and let $A$ be the $n\times n$ matrix
$A=(a_{ij})$.
Then the following properties hold.
\begin{enumerate}
\item Let $X_H$ be the vector field 
associated to  $H\colon \Sigma\times \Sigma\to \R$ and the symplectic form
$\omega$, i.e. $X_H$ is the vector field
tangent to level sets of $H\colon \Sigma
\times \Sigma \to \R$ so that $\omega(X_H,Y)=dH(Y)$ for all vector fields $Y$
which are tangent to $\Sigma\times \Sigma$. Then $X_H$ is multivalued,
and the differential inclusion
corresponding to $X_H$ is:
\begin{equation}
\begin{array}{rl}
\dfrac{dp}{dt}  & \in   \,\,  P_p\, A'^{-1} \,\, \M'\, \BR_p(q),\\ 
\dfrac{dq}{dt}  & \in  \,\, P_q\, A^{-1} \, \M \,\, \BR_q(p).
\end{array} 
\label{eq:hamiltonvf}
\end{equation}
where $P_p,P_q\colon \R^n\to   T\Sigma$ is the projection along 
$A'^{-1} \ii$ respectively $A^{-1}\ii$. 
\item \label{itemcommute}
\begin{equation}(P_q A'^{-1})'=
P_p A^{-1}\label{eq:commute}
\end{equation}
\item On each level set of $H^{-1}(\rho)$, $\rho>0$, the flow is piecewise a translation flow, and has no 
stationary point.
\item First return maps between certain hyperplanes are piecewise affine maps.
\end{enumerate}
\end{theorem}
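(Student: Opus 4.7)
To prove item 1, I would work from the defining property of the Hamiltonian vector field: $X_H$ is the unique vector field tangent to $\Sigma\times\Sigma$ satisfying $\omega(X_H,Y)=dH(Y)$ for every tangent vector field $Y$. Using the representation (\ref{eq:rewriteH}) together with the envelope theorem (valid on the open dense set where $\BR_p$ and $\BR_q$ are single-valued, by Proposition~\ref{prop:unique2}(1)), one obtains $dH(\xi,\eta)=(\BR_p(q))'\M\eta-\xi'\M\BR_q(p)$ for $(\xi,\eta)\in T\Sigma\times T\Sigma$. Writing $X_H=(X_p,X_q)$ and expanding the symplectic pairing, $\omega(X_H,Y)=X_p'A\eta-\xi'AX_q$. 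Equating these two expressions and letting $\xi,\eta$ vary independently over $T\Sigma$ forces $X_p'A\equiv(\BR_p(q))'\M\pmod{\ii'}$ and $AX_q\equiv\M\BR_q(p)\pmod{\ii}$. Solving for $X_p,X_q$ and then imposing $\ii'X_p=\ii'X_q=0$ fixes the Lagrange multipliers uniquely and produces the projection formula (\ref{eq:hamiltonvf}). At points where $\BR_p$ or $\BR_q$ is multivalued, the same calculation carried out with the Clarke generalised gradient of $H$ yields the set-valued version.

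For item 2, I would simply compute. Writing the projections explicitly as
$$P_p = I-\frac{A'^{-1}\ii\,\ii'}{\ii'A'^{-1}\ii}, \qquad P_q = I-\frac{A^{-1}\ii\,\ii'}{\ii'A^{-1}\ii},$$
and noting the scalar identity $\ii'A^{-1}\ii=\ii'A'^{-1}\ii$, one expands both sides of (\ref{eq:commute}) and uses $(uv')'=vu'$ for a column $u$ and row $v'$ to obtain the desired equality in a few lines. This transpose-symmetry is precisely what is needed later to assemble the two components of $X_H$ into a consistent, volume-preserving vector field.

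For items 3 and 4, the key fact is Proposition~\ref{prop:unique2}(1), which states that $\BR_p$ and $\BR_q$ are constant outside a finite union of hyperplanes. Hence the right-hand side of (\ref{eq:hamiltonvf}) is piecewise constant, so in every open polyhedral region where $X_H$ is single-valued the flow is a translation by a fixed vector, and the restriction of any transition map between two transverse hyperplanes to such a region is affine; gluing these affine pieces along codimension-one common faces (using the continuity of the flow proved in Sections~4 and 5) produces a piecewise affine first return map.

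The one genuine obstacle is to rule out stationary points on $H^{-1}(\rho)$ for $\rho>0$. The plan is to show that $0\in X_H(p,q)$ would force the existence of $\tilde p\in\BR_p(q)$ with $\M'\tilde p$ proportional to $\ii$ and $\tilde q\in\BR_q(p)$ with $\M\tilde q$ proportional to $\ii$. Restricting $\M$ to the supports of $\tilde p$ and $\tilde q$ and invoking the first transversality assumption (\ref{eq:trans1}) together with Proposition~\ref{prop:unique2}(3)--(4) then forces $\tilde p=\bp$ and $\tilde q=\bq$ and $(p,q)=(\bp,\bq)$, which contradicts $H(p,q)=\rho>0$. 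I expect the main subtlety here to be the careful case analysis when $\BR_p(q)$ or $\BR_q(p)$ lies on a proper face of $\Sigma$: one must invoke the transversality hypotheses at the level of the corresponding submatrices to rule out nontrivial constant row or column combinations with nonnegative support.
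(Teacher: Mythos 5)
Your argument for items 1 and 2 matches the paper's: it too solves for $X_H$ in coordinate form, introduces the Lagrange multipliers $\lambda_1,\lambda_2$ that enforce tangency to $\Sigma\times\Sigma$, and proves (\ref{eq:commute}) by direct expansion of the projection formulas. For items 3 and 4 the paper's proof actually stops after establishing (\ref{eq:commute}), treating the piecewise-translation and piecewise-affine claims as immediate from (\ref{eq:hamiltonvf}) being piecewise constant (made precise via Proposition~\ref{prop:projection}), and it leaves the no-stationary-point assertion without explicit justification; your sketch supplies a missing step. One streamlining: once $0\in X_H(p,q)$ produces some $\tilde p\in\BR_p(q)$ with $\tilde p'\M=\lambda\ii'$ and some $\tilde q\in\BR_q(p)$ with $\M\tilde q=\mu\ii$, observe that $\tilde p'\M\tilde q$ equals both $\lambda$ and $\mu$, so $H(p,q)=\tilde p'\M q-p'\M\tilde q=\lambda-\mu=0$. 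This contradicts $\rho>0$ directly, needs neither the identification $(p,q)=(\bp,\bq)$ nor the first transversality assumption (\ref{eq:trans1}) (which is not among the hypotheses of Theorem~\ref{thmA}), and sidesteps the case analysis on proper faces that you flagged as the main subtlety.
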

\begin{proof}
Let us compute the Hamiltonian vector field
$X_H$  corresponding to a symplectic two-form $\omega=\sum_{ij}a_{ij} dp_i \wedge dq_j$,  where we assume that $a_{ij}$ are constants so that the matrix  $A=(a_{ij})$ is invertible.  Notice that
we consider $H$ as a function on  $\Sigma\times \Sigma$ (rather
than on $\R^n\times \R^n$). So, by definition,
$X_H$ is the vector field which is tangent to the hyper plane  $\Sigma$
and for which 
$\omega(X_H,Y)=dH\cdot Y$ for every vector field $Y$ which is tangent
to $\Sigma$. Write
$X_H=\sum_j b_j \dfrac{\partial}{\partial p_j}+\sum_i c_i \dfrac{\partial}{\partial q_i}$ and $Y=\sum_j u_j \dfrac{\partial}{\partial p_j}+\sum_i v_i \dfrac{\partial}{\partial q_i}$
and for simplicity let $b,c,u,v$ be the  column matrices corresponding to the coefficients $b_j,c_i,u_j,v_i$. That $X_H$ and $Y$ are tangent to $\Sigma\times \Sigma$ means that the sum of the elements of $b,c,u,v$ are equal to $0$.
Notice that  $\omega(X_H,Y)=b' A v- u' A c$
and 
$dH\cdot Y=(\dfrac{\partial H}{\partial p_1},\dots, \dfrac{\partial H}{\partial p_n}) u +  (\dfrac{\partial H}{\partial q_1},\dots, \dfrac{\partial H}{\partial q_n})  v$.
Hence $v' A' b=v'(\dfrac{\partial H}{\partial q_1},\dots, \dfrac{\partial H}{\partial q_n})'$ and $-c' A' u=(\dfrac{\partial H}{\partial p_1},\dots, \dfrac{\partial H}{\partial p_n}) u $ for all $u,v\in \R^n$ with sum zero.
It follows that $A'b=(\dfrac{\partial H}{\partial q_1},\dots, \dfrac{\partial H}{\partial q_n})'+\lambda_1 \underline 1$ and 
$c' A' =-(\dfrac{\partial H}{\partial p_1},\dots, \dfrac{\partial H}{\partial p_n}) + \lambda_2 \underline 1'$. 
So  $b=A'^{-1}(\dfrac{\partial H}{\partial q_1},\dots, \dfrac{\partial H}{\partial q_n})'+\lambda_1 A'^{-1} \underline 1$
and $c =-A^{-1}(\dfrac{\partial H}{\partial p_1},\dots, \dfrac{\partial H}{\partial p_n})' + \lambda_2 A^{-1}\underline 1$. Here $\lambda_1, \lambda_2$ are so that
the sum of the elements of $c$ and $d$ is zero.
To compute $\dfrac{\partial H}{\partial p_i}$ and $\dfrac{\partial H}{\partial q_i}$,
notice that 
 $q\mapsto \BR_p(q)$ and $p\mapsto \BR_q(p)$ are piecewise constant
 and rewrite  
 $H(p,q)=\M' \, \BR_p(q) \,  \cdot q- \M \, \BR_q(p) \cdot p$
in order to make the role of $p,q$ more symmetric, where $\cdot$ stands
for the inner product and $\M'$ for the transpose of $\M$. It follows that 
$(\dfrac{\partial H}{\partial q_1},\dots, \dfrac{\partial H}{\partial q_n})'=
\M' \, \BR_p(q)$ and 
$(\dfrac{\partial H}{\partial p_1},\dots, \dfrac{\partial H}{\partial p_n})'
= -\M \, \BR_q(p)$.
Hence
$$b=A'^{-1} \M' \, \BR_p(q) + \lambda_1 A'^{-1} \ii$$ and 
$$c =A^{-1} \M \, \BR_q(p) + \lambda_2   A^{-1}\ii.$$
where, as mentioned, $\lambda_i$ are chosen so that 
the sum of the elements of $b$ and $c$ is zero.
In other words, the differential inclusion associated to the vector field
$X_H$ becomes 
\begin{equation*}
\begin{array}{rl}
\dfrac{dp}{dt}  & \in  \,\,  P_p\, A'^{-1} \,\, \M'\, \BR_p(q),\\ 
\dfrac{dq}{dt}  & \in   \,\, P_q\, A^{-1} \, \M \,\, \BR_q(p).
\end{array} 
\end{equation*}
where $P_p,P_q\colon \R^n\to   T\Sigma$ is the projection along 
$A'^{-1} \ii$ respectively $A^{-1}\ii$. 

\renewcommand\a{\hat a}
In order to prove (\ref{eq:commute}), note
$$
\begin{array}{rll}
P_p A'^{-1}z &\in A'^{-1}z-\dfrac{1}{A'^{-1}\ii \cdot \ii}(A'^{-1}z \cdot \ii)  \cdot A'^{-1}\ii \\
P_q A^{-1}z &\in A^{-1}z-\dfrac{1}{A^{-1}\ii \cdot \ii}(A^{-1}z \cdot \ii)  \,\,  \cdot A^{-1}\ii
\end{array}
$$
We also note that $A'^{-1}\ii \cdot \ii= \ii' A'^{-1} \ii=
\ii' A^{-1} \ii=A^{-1}\ii \cdot \ii$.
Denote the coefficients of the matrix $A^{-1}$ by $\a_{ij}$.  Then 
$$A^{-1}z \cdot  \ii = (\a_{11}+\dots +\a_{n1})z_1 +  (\a_{12}+\dots +\a_{n2})z_2 + \dots +  (\a_{1n}+\dots +\a_{nn})z_n$$
and 
$$A^{-1}\underline 1=\left( 
\begin{array}{c}\sum_j \a_{1j} \\ \sum_j \a_{2j}\\ \vdots \\  \sum_j \a_{nj} \end{array}\right).$$
Hence
$(A^{-1}z \cdot  \ii) \, A^{-1} \ii$  is equal to 
$$\left( 
\begin{array}{cccc}
(\sum_j \a_{1j})(\sum_i \a_{i1}) & (\sum_j \a_{1j})(\sum_i \a_{i2})  & \dots & 
(\sum_j \a_{1j})(\sum_i \a_{in}) 
\\
(\sum_j \a_{2j})(\sum_i \a_{i1}) & (\sum_j \a_{2j})(\sum_i \a_{i2})  & \dots & 
(\sum_j \a_{2j})(\sum_i \a_{in}) \\
\vdots & \vdots & \ddots  &  \vdots \\
(\sum_j \a_{nj})(\sum_i \a_{i1}) & (\sum_j \a_{nj})(\sum_i \a_{i2})  & \dots & 
(\sum_j \a_{nj})(\sum_i \a_{in})
\end{array}\right) z.$$
From this (\ref{eq:commute}) follows. 
\end{proof}

\begin{lemma}\label{lem:AM}
Suppose that $A=\M$ then
\begin{equation*}
\begin{array}{rl}
\dfrac{dp}{dt}  &\in  \,\,  \BR_p(q) - \bar p,\\ 
\dfrac{dq}{dt}  &\in  \,\,  \BR_q(p) - \bar q.
\end{array} 
\end{equation*}
\end{lemma}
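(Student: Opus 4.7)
The plan is to substitute $A = \M$ into the formulas of Theorem~\ref{thmA} and then exploit the defining equations $\bar p' \M = \lambda \ii'$ and $\M \bar q = \mu \ii$ of the completely mixed Nash equilibrium to identify the two projections $P_p$ and $P_q$ explicitly.

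First I would simplify the right-hand side of (\ref{eq:hamiltonvf}). With $A = \M$ one has $A'^{-1}\M' = I$ and $A^{-1}\M = I$, so
\begin{equation*}
\frac{dp}{dt} \in P_p\,\BR_p(q), \qquad \frac{dq}{dt} \in P_q\,\BR_q(p).
\end{equation*}
Thus the lemma reduces to showing $P_p\,\BR_p(q) = \BR_p(q) - \bar p$ and $P_q\,\BR_q(p) = \BR_q(p) - \bar q$.

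Next I would compute the direction vectors $A'^{-1}\ii$ and $A^{-1}\ii$ along which $P_p$ and $P_q$ project. Since $\bar p' \M = \lambda\ii'$, transposing gives $\M'\bar p = \lambda\ii$, so $A'^{-1}\ii = \M'^{-1}\ii = \bar p/\lambda$. Similarly $A^{-1}\ii = \bar q/\mu$. Since $\bar p,\bar q \in \Sigma$, we have $\ii'\bar p = \ii'\bar q = 1$, so $\ii' A'^{-1}\ii = 1/\lambda$ and $\ii' A^{-1}\ii = 1/\mu$. For any $z \in \R^n$ the projection onto $T\Sigma$ along $\bar p/\lambda$ is
\begin{equation*}
P_p z = z - \frac{\ii' z}{\ii' A'^{-1}\ii}\, A'^{-1}\ii = z - (\ii' z)\,\bar p,
\end{equation*}
and analogously $P_q z = z - (\ii' z)\,\bar q$.

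Finally I would observe that $\BR_p(q)$ and $\BR_q(p)$ are convex combinations of unit vectors in $\Sigma$, hence their coordinates sum to $1$. Therefore $P_p\,\BR_p(q) = \BR_p(q) - \bar p$ and $P_q\,\BR_q(p) = \BR_q(p) - \bar q$, which is exactly the claimed differential inclusion. There is no real obstacle here: the whole argument is an algebraic simplification, the only nontrivial input being the identification $A'^{-1}\ii \parallel \bar p$, $A^{-1}\ii \parallel \bar q$ coming from the completely mixed Nash equilibrium.
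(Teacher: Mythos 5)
Your proof is correct and follows essentially the same route as the paper: substitute $A=\M$ into (\ref{eq:hamiltonvf}), use the completely mixed Nash equilibrium relations $\bar p'\M = \lambda\ii'$, $\M\bar q = \mu\ii$ to identify $A'^{-1}\ii$ and $A^{-1}\ii$ as multiples of $\bar p$ and $\bar q$, and evaluate the projections $P_p$, $P_q$ on probability vectors. You are in fact slightly more careful than the paper, which asserts $A'^{-1}\ii = \bar p$ rather than $\bar p/\lambda$; as you correctly observe, the scalar is irrelevant because a projection along a direction is unchanged by rescaling that direction, so the argument goes through either way.
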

\begin{proof} Notice that $\M\bar q\in <\underline 1'>$ and 
$\bar p' \M\in <\ii'>$. It follows that if we take $A=\M$
then $A'^{-1} \ii=\M'^{-1}\ii= (\ii' \M^{-1})'=\bar p$
and $A^{-1}\ii=\M^{-1}\ii=\bar q$. 
So  (\ref{eq:hamiltonvf}) becomes 
\begin{equation*}
\begin{array}{rl}
\dfrac{dp}{dt}  &\in  \,\,  P_p\, \, \BR_p(q),\\ 
\dfrac{dq}{dt}  &\in  \,\, P_q\,  \,\, \BR_q(p),
\end{array} 
\end{equation*}
where $\hat P_p,P_q\colon \R^n\to T \Sigma$ is the projection along 
$\bar p=A'^{-1} \ii$ respectively $\bar q=A^{-1}\ii$. 
The lemma  follows. 
\end{proof}

\section{Continuity of flows of related differential inclusions}
In the next theorem we consider related differential inclusions 
and show that these have a unique flow associated to them and that
this flow continuous.  Then, in the next section, we will show
that this implies continuity of the flow associated to the Hamiltonian
vector fields.

That the flow is continuous is not entirely obvious,
in particular because orbits can lie entirely in the set where both $q\mapsto \BR_p(q)$ and $p\mapsto \BR_q(p)$
are multi-valued.

In this section we do {\em not} need to assume $m=n$.
Let $\hat \Sigma_p,\hat \Sigma_q$ be the hyperplanes containing $\Sigma_p$
resp. $\Sigma_q$. Let $\M$ be as before a $m\times n$ matrix
with a completely mixed Nash equilibrium $(\bp,\bq)$.
Let $\alpha\in \R$ and let $X$ be a $m\times m$ matrix and $Y$ a $n\times n$ matrix.
Assume that 
\begin{equation}
\M Y=X'\M\label{eq:commutexy}
\end{equation}
\begin{equation}X(p)+\alpha \bp \in \hat \Sigma_p\,\,\mbox { for all }\,p\in  \Sigma_p,\label{eq:level1}\end{equation}
\begin{equation}Y(q)+\alpha\bq \in \hat \Sigma_q\,\,\mbox { for all }\,q\in  \Sigma_q.\label{eq:level2}\end{equation}
Furthermore, 
consider the following  differential inclusion
\begin{equation}
\dfrac{dp}{dt}\in X\BR_p(q)+\alpha \bp -p, \dfrac{dq}{dt}\in Y\BR_q(p)+\alpha \bq -q.
\label{eq:genBR}
\end{equation}

\begin{lemma}\label{lem:contatNE}
Let $\M$ be as before a $m\times n$ matrix
with a completely mixed Nash equilibrium $(\bp,\bq)$.
Consider
$H(p,q)=(\BR_p(q))' \M  q- p' \M  \BR_q(p)$
and let $\M,X,Y$ be as in (\ref{eq:commutexy})-(\ref{eq:level2})
and consider the differential inclusion (\ref{eq:genBR}) . 
Then 
\begin{enumerate}
\item the above differential inclusion has solutions;
\item  solutions $t\mapsto (p(t),q(t))$  of (\ref{eq:genBR}) stay in $\hat \Sigma_p\times \hat \Sigma_q$ (but in principle components could become negative);
\item
$\dfrac{dH}{dt}=-H$ and 
so $H$ tends to zero along orbits as $H(t)=ce^{-t}$.
\end{enumerate}
Moreover, if the
Nash equilibrium is unique, the motion of the differential inclusion  (\ref{eq:genBR})
is continuous at this  Nash equilibrium
and orbits do {\em not} reach the Nash equilibrium in finite time. 
\end{lemma}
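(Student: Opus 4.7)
The plan is to handle the four assertions in order, the heart of the argument being the identity $\dot H = -H$ which reduces everything else to soft analysis.

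\textbf{Existence and invariance of $\hat\Sigma_p\times\hat\Sigma_q$.} Rewrite (\ref{eq:genBR}) as $(\dot p,\dot q)\in F(p,q)$. Since $\BR_p,\BR_q$ are upper semicontinuous with nonempty convex compact values (convex hulls of unit vectors) and $X,Y,\alpha,\bp,\bq$ are constants, $F$ is upper semicontinuous and convex-valued, so absolutely continuous solutions exist by standard Filippov-type theory. To see invariance, let $(p,q)\in\hat\Sigma_p\times\hat\Sigma_q$ and $r_p\in\BR_p(q)\subset\Sigma_p$. Condition (\ref{eq:level1}) gives $Xr_p+\alpha\bp\in\hat\Sigma_p$, i.e.\ its components sum to $1$, and $p\in\hat\Sigma_p$ also has component sum $1$. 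Hence $\dot p=Xr_p+\alpha\bp-p$ has component sum zero, so $\dot p\in T\hat\Sigma_p$; symmetrically $\dot q\in T\hat\Sigma_q$. Thus solutions stay in $\hat\Sigma_p\times\hat\Sigma_q$.

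\textbf{Evolution of $H$.} On each open region where $\BR_p(q)=r_p$ and $\BR_q(p)=r_q$ are locally constant selections, $H(p,q)=r_p'\M q-p'\M r_q$ is affine. Along a solution the chain rule gives
\begin{equation*}
\dot H=r_p'\M\dot q-\dot p'\M r_q=r_p'\M(Yr_q+\alpha\bq-q)-(Xr_p+\alpha\bp-p)'\M r_q.
\end{equation*}
The quadratic terms cancel by (\ref{eq:commutexy}): $r_p'\M Y r_q=r_p'X'\M r_q$. For the $\alpha$-terms, the completely mixed Nash equilibrium gives $\M\bq=\mu\ii_p$ and $\bp'\M=\lambda\ii_q'$, with $\lambda=\bp'\M\bq=\mu$; since $r_p,r_q$ are probability vectors, $r_p'\M\bq=\mu=\lambda=\bp'\M r_q$, so $\alpha r_p'\M\bq-\alpha\bp'\M r_q=0$. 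What remains is $\dot H=-(r_p'\M q-p'\M r_q)=-H$. Because $H$ is continuous and piecewise affine on $\hat\Sigma_p\times\hat\Sigma_q$ and solutions are absolutely continuous, $t\mapsto H(p(t),q(t))$ is absolutely continuous and satisfies $\dot H=-H$ almost everywhere, hence $H(t)=H(0)e^{-t}$ globally.

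\textbf{Continuity at the Nash equilibrium and non-reachability.} Suppose the Nash equilibrium $(\bp,\bq)$ is unique, so by the previous proposition $H^{-1}(0)=\{(\bp,\bq)\}$. Since $H$ is continuous and nonnegative with this unique zero, for every $\epsilon>0$ there is $\delta>0$ (using compactness of $\Sigma_p\times\Sigma_q$, or working in a small compact neighbourhood inside $\hat\Sigma_p\times\hat\Sigma_q$) such that $H(p,q)<\delta$ implies $(p,q)\in B_\epsilon(\bp,\bq)$. If $(p_0,q_0)$ satisfies $H(p_0,q_0)<\delta$, then $H$ is monotonically decreasing along the orbit, so the whole orbit stays in $B_\epsilon(\bp,\bq)$. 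This is exactly continuity of the flow at $(\bp,\bq)$. Moreover, if the orbit reached $(\bp,\bq)$ at some finite time $t_0$, then $H(t_0)=0$, contradicting $H(t_0)=H(0)e^{-t_0}>0$ when the orbit starts off the equilibrium.

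\textbf{Main obstacle.} The pivotal step is the exact cancellation leading to $\dot H=-H$; it requires simultaneously the commutation relation (\ref{eq:commutexy}), the level constraints (\ref{eq:level1})--(\ref{eq:level2}) (used implicitly through $r_p,r_q\in\Sigma$), and the completely mixed structure $\M\bq=\mu\ii_p$, $\bp'\M=\lambda\ii_q'$ with $\lambda=\mu$. A minor technical point is justifying the chain rule across the codimension-one strata where $\BR_p,\BR_q$ jump: this is handled by noting that a measurable selection exists (Filippov), that $H$ is globally Lipschitz, and that the identity $\dot H=-H$ therefore propagates to the a.e.\ derivative of $t\mapsto H(p(t),q(t))$ along any solution.
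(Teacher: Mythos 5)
Your proof is correct and follows essentially the same route as the paper: existence from upper semicontinuity of the best-response correspondences (Aubin--Cellina), invariance of $\hat\Sigma_p\times\hat\Sigma_q$ from the level constraints (\ref{eq:level1})--(\ref{eq:level2}), and the Lyapunov identity $\dot H=-H$ via the cancellations $\M Y=X'\M$ and $r_p'\M\bq=\bp'\M\bq=\bp'\M r_q$. You supply a bit more detail than the paper on two points it leaves implicit --- the a.e.\ chain-rule justification across the switching strata, and the $\varepsilon$--$\delta$ argument deducing continuity at the Nash equilibrium and non-reachability in finite time from monotonicity of $H$ --- but the underlying ideas are identical.
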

Remark that at this point we do not yet know that there exists a unique solution
of the differential inclusion. The above lemma shows that any flow
is continuous at the Nash equilibrium. In the Theorem~\ref{thm:trans}
we show that the flow is unique and continuous everywhere.

\begin{proofof}{Lemma~\ref{lem:contatNE}}
 That the differential inclusion (\ref{eq:genBR}) has solutions
holds because $p\mapsto \BR_q(p)$ and $q\mapsto \BR_p(q)$ are upper semi-continuous, see  \cite{AubinCellina84}.
Note that (\ref{eq:level1}) and (\ref{eq:level2}) imply that the solutions of
the differential inclusion remain in $\Sigma$. 
$H$ is continuous by definition
of $\BR_p$ and $\BR_q$. 
To show that solutions of (\ref{eq:genBR})  tend to Nash equilibria, we show that 
$H(p,q)=\BR_p(q)' \M  q- p' \M  \BR_q(p)$
is a Lyapunov function.

As we saw, $H\ge 0$ and $H(p,q)=0$ iff $(p,q)$ is a Nash equilibrium. 
Since $\BR_p$ and $\BR_q$ are piecewise constant,
(\ref{eq:genBR}) implies 
$$\begin{array}{rl}
\dfrac{dH}{dt} &=\BR_p(q)' \M \dfrac{dq}{dt} - \dfrac{dp}{dt}' \M  \BR_q(p)\\
\\
& =\BR_p(q)' \M ( Y\BR_q(p)+\alpha \bq -q) - (X\BR_p(q)+\alpha \bp-p)'\M  \BR_q(p)=-H\end{array}$$ 
where we used that $\M Y= X'\M$ and $p'\M \bq=\bp'\M \bq = \bp' \M q$ for all $q\in \Sigma_p$ and $q\in \Sigma_q$. 
\end{proofof}

\begin{remark}
If $m=n$ and the transversality condition
(\ref{eq:trans1}) 
then properties (\ref{eq:commutexy}), (\ref{eq:level1}) and (\ref{eq:level2}) imply that $X\bp = (1-\alpha) \bp$
and $Y\bq=(1-\alpha)\bq$. Indeed, $Yq+\alpha \bq\in \Sigma_q$ implies that
$\ii' Y q+\alpha \ii' \bq =1$ and so $\ii' Y q=1-\alpha$. Moreover, $\bp'\M=\lambda \ii'$  for 
some $\lambda$.  Since $\M Y=X'\M$ this implies $(1-\alpha)\lambda=\lambda \ii 'Y q=
\bp' \M Y q= \bp' X' \M q$ for all $q\in \Sigma_q$.
Hence   $\bp' X'  \M$ is a multiple of $\ii'$. By the transversality condition $\M$ is invertible 
and since $\bp' \M=\ii'$
it follows that $\bp' X'$ is a multiple of $\bp'$.  Therefore,  by (\ref{eq:level1}), 
$X\bp=(1-\alpha)\bp$.
\end{remark}

\begin{definition}[Indifference sets]
Assume $1\le j_1<\dots<j_k\le n$ and $1\le i_1<\dots<i_l\le m$.
Then  $Z_p(j_1,\dots,j_k)\subset \Sigma_p$ is defined to be the set of
of $p\in \Sigma_p$ where $\BR_q(p)=[e^q_{j_1},\dots,e^q_{j_k} ]$.
Similarly, $Z_q(i_1,\dots,i_l)$  is defined to be the set of
$q\in \Sigma_q$ where $\BR_p(q)=[e^p_{i_1},e^p_{i_2},\dots,e^p_{i_l}]$.
\end{definition}

\begin{assumption}[Third transversality assumption on $X,Y$]\label{trans3}
Let $\bp,\bq$ be a completely mixed Nash equilibrium w.r.t. $\M$.
Furthermore, let $\alpha\in \R$ and let $X,Y$ be as in (\ref{eq:level1}) and (\ref{eq:level2}). 
For each $1\le j_1<\dots<j_k\le n$ and $1\le  i_1<\dots<i_l\le m$ we require that 
the subspaces  associated to $Z_p(j_1,\dots,j_k)$ and $[X(e_{i_1})+\alpha \bp,\dots,X(e_{i_l})+\alpha \bp]$ 
are in general position w.r.t. $\Sigma_p$. Similarly, we require that 
$Z_q(i_1,\dots,i_l)$ and $[Y(e_{j_1})+\alpha \bq,\dots,Y(e_{j_k})+\alpha\bq]\cap \Sigma_q$
are in general position w.r.t. $\Sigma_q$.
\end{assumption}

Note that if $X$ and $Y$ are the identity matrices, this assumption follows from the 2nd transversality assumption,
see the 2nd item in Proposition~\ref{prop:unique2}. Before we can prove the main 
theorem of this section, we need one more lemma.
In game theory, this restriction to a subspace
would be referred to as the {\lq}dynamics associated to a subgame{\rq}.

\begin{lemma}[Restriction of dynamics]
Assume that the previous transversality assumption holds and that
$Z_p(j_1,\dots,j_k)\times Z_q(i_1,\dots,i_l)$ and
$$[X(e_{i_1}^p)+\alpha \bp,\dots,X(e_{i_l}^p)+\alpha \bp]\times [Y(e_{j_1}^q)+\alpha \bq,\dots,Y(e_{j_k}^q)+\alpha \bq]
$$
intersect in a unique point $(a,b)$. Then
define 
$$\BR_p^{i_1,\dots,i_l}\colon <Y(e_{j_1}^q)+\alpha \bq,\dots,Y(e_{j_k}^q)+\alpha \bq>\quad \to \quad <e^p_{i_1},\dots,e_{i_l}^p> $$ and 
$$\BR_q^{j_1,\dots,j_k}\colon <X(e_{i_1}^p)+\alpha \bp,\dots,X(e_{i_l}^p)+\alpha \bp>\quad \to \quad <e^q_{j_1},\dots,e^q_{j_k}>$$
by 
\begin{equation}
\BR_p^{i_1,\dots,i_l}(q):=\argmax_{p\in <e^p_{i_1},\dots,e_{i_l}^p>}  p'\M q \,\, 
\mbox{ and }\,\,  \BR_q^{j_1,\dots,j_k}(p):=\argmin_{q\in <e^q_{j_1},\dots,e^q_{j_k}>} \, p' \M q .
\label{eq:brpqres}
\end{equation}
Then 
\begin{itemize}
\item $(a,b)$ is a completely mixed Nash
equilibrium in the sense that
all components of $a$ and $b$ are strictly positive and 
that $\BR_p^{i_1,\dots,i_l}(q)=<e^p_{i_1},\dots,e_{i_l}^p>$
and $\BR_q^{j_1,\dots,j_k}(p):=<e^q_{j_1},\dots,e^q_{j_k}>$.
\item
Moreover, all orbits of 
\begin{equation}
\dfrac{dp}{dt}\in X\BR_p^{i_1,\dots,i_l}(q)+\alpha \bp -p, \dfrac{dq}{dt}\in Y\BR_q^{j_1,\dots,j_k}(p)+\alpha \bq -q.
\label{eq:genBRres}
\end{equation} remain
in the space $[X(e_{i_1}^p)+\alpha \bp,\dots,X(e_{i_l}^p)+\alpha \bp]\times [Y(e_{j_1}^q)+\alpha \bq,\dots,Y(e_{j_k}^q)+\alpha \bq]$ and converge to $(a,b)$. The motion of the differential inclusion is continuous {\em at} $(a,b)$. 
\end{itemize}
\end{lemma}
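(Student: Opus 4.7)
My plan is to verify both bullets by reducing the restricted dynamics to the setting of Lemma~\ref{lem:contatNE}, applied to the subgame on rows $i_1,\dots,i_l$ and columns $j_1,\dots,j_k$.

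First I would establish that $(a,b)$ is a completely mixed Nash equilibrium of this subgame. By hypothesis we can write $a=X\sigma+\alpha\bp$ for some $\sigma\in[e^p_{i_1},\dots,e^p_{i_l}]$ and $b=Y\tau+\alpha\bq$ for some $\tau\in[e^q_{j_1},\dots,e^q_{j_k}]$. The key technical step — which I expect to be \textbf{the main obstacle} — is to show that all coefficients of $\sigma$ (and of $\tau$) in the pure-strategy basis are strictly positive. Assume for contradiction that some coefficient vanishes; then $a$ lies in a proper face $[X(e_{i_{s_1}})+\alpha\bp,\dots,X(e_{i_{s_{l'}}})+\alpha\bp]$ with $l'<l$. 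The third transversality assumption applied to this smaller simplex paired with $Z_p(j_1,\dots,j_k)$ keeps them in general position. Since the original pair meets in a single point, a dimension count in $\Sigma_p$ forces $\codim(Z_p)=l-1$ and hence $k=l$; the codimensions for the smaller simplex then add up to strictly more than $\dim\Sigma_p$, so general position forces the intersection to be empty, contradicting $a$ being in it.

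Next, the restricted best-response conditions drop out of the definitions. Because $a\in Z_p(j_1,\dots,j_k)$, the components $j_1,\dots,j_k$ of $a'\M$ are all equal to a common minimal value $\lambda$; consequently, for any $q$ in the span $<e^q_{j_1},\dots,e^q_{j_k}>$ the quantity $a'\M q$ depends only on the sum of coefficients of $q$, so $\BR_q^{j_1,\dots,j_k}(a)$ equals the whole span. The dual computation yields $\BR_p^{i_1,\dots,i_l}(b)=<e^p_{i_1},\dots,e^p_{i_l}>$.

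For the second bullet, I would first verify that the restricted differential inclusion (\ref{eq:genBRres}) leaves the product simplex invariant: when $p$ lies in $[X(e_{i_1})+\alpha\bp,\dots,X(e_{i_l})+\alpha\bp]$, the vector $X\BR_p^{i_1,\dots,i_l}(q)+\alpha\bp - p$ is a difference of two points in this simplex, so the flow preserves it, in direct analogy with the way (\ref{eq:level1})--(\ref{eq:level2}) force (\ref{eq:genBR}) to preserve $\Sigma_p\times\Sigma_q$. I would then apply Lemma~\ref{lem:contatNE} directly to the restricted system: the $l\times k$ submatrix $\MB$ of $\M$ formed from rows $i_1,\dots,i_l$ and columns $j_1,\dots,j_k$ plays the role of $\M$, $(a,b)$ is its completely mixed Nash equilibrium by the previous steps, and the commutation $\M Y=X'\M$ together with the level conditions restrict appropriately to the relevant spans. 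The corresponding Lyapunov function $H^\bullet$ for the subgame then satisfies $dH^\bullet/dt=-H^\bullet$, yielding convergence of orbits to $(a,b)$ and continuity of the motion at $(a,b)$ exactly as in the previous lemma.
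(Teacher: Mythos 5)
Your proposal is correct and follows essentially the same route as the paper's own (very terse) proof: positivity of the coordinates of $(a,b)$ from the transversality assumption via a codimension count, invariance of the product simplex from the form of the vector field, and convergence plus continuity at $(a,b)$ by applying Lemma~\ref{lem:contatNE} to the $k\times k$ subgame. You have simply spelled out the details the paper leaves implicit.
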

\begin{proof}
That all components of $a$ and $b$ are strictly positive follows from the transversality condition. 
That orbits remain in this space follows from the definition, and the previous lemma
implies that orbits converge to $(a,b)$. 
\end{proof}

Now we will prove that, provided transversality conditions hold,
the flow is unique and continuous.

\begin{theorem} [Transversality condition implies continuity and uniqueness] \label{thm:trans}
Let $\M$ be as before a $m\times n$ matrix
with a completely mixed Nash equilibrium $(\bp,\bq)$.
Let $\alpha\in \R$ and let $X$ be a $m\times m$ matrix and $Y$ a $n\times n$ matrix
as in (\ref{eq:commutexy})-(\ref{eq:level2})
and consider the differential inclusion (\ref{eq:genBR}) . 
Moreover, assume the transversality assumption (\ref{eq:trans1}) and (\ref{trans3}) hold.
Then \begin{enumerate}
\item Through each point $(p,q)\ne (\bp,\bq)$ there exists a unique solution
(the orbit is unique for all time or at least until such time that it hits  $(\bp,\bq)$). 
\item  the motion defined by equations (\ref{eq:genBR})  forms a continuous flow.
\end{enumerate}
\end{theorem}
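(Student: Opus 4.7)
The plan is to use a stratification of $\Sigma_p\times\Sigma_q$ by the combinatorial type of the multivalued map $(p,q)\mapsto(\BR_p(q),\BR_q(p))$. For each pair $(I,J)$ with $I\subset\{1,\dots,m\}$, $J\subset\{1,\dots,n\}$, let $S_{I,J}$ denote the locally closed stratum where $\BR_p(q)=[e^p_i:i\in I]$ and $\BR_q(p)=[e^q_j:j\in J]$. By Proposition~\ref{prop:unique2}(\ref{itemb}) and the assumption (\ref{trans3}), these strata meet in general position and their dimensions decrease with $|I|+|J|$. On a top-dimensional stratum ($|I|=|J|=1$), the right-hand side of (\ref{eq:genBR}) is a singleton affine vector field $\dot p=Xe^p_i+\alpha\bp-p$, $\dot q=Ye^q_j+\alpha\bq-q$, which has a unique analytic solution driving $(p,q)$ exponentially toward a fixed affine target.

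Next I would handle stratum crossings. When an orbit in a top stratum reaches the boundary transversally, the transversality assumption (\ref{trans3}) ensures the meeting is at an isolated time and pins down the continuation: either the affine target in the adjacent open stratum lies strictly across the common face, so the orbit enters the new open region (and its continuation is unique since the flow is again affine), or both affine targets point into the intervening face, in which case the orbit is captured and follows the restricted dynamics. The Restriction of Dynamics Lemma shows that this restricted dynamics is itself a differential inclusion of type (\ref{eq:genBRres}) on a lower-dimensional subgame having its own completely mixed Nash equilibrium $(a,b)$. Inducting on the codimension of the stratum and invoking Lemma~\ref{lem:contatNE} on each sub-stratum, one obtains uniqueness of the continuation at every admissible boundary configuration, together with the fact that the subgame equilibrium is approached but not reached in finite time.

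For continuity, I would use two ingredients. First, Lemma~\ref{lem:contatNE} gives the Lyapunov identity $H(p(t),q(t))=H(p(0),q(0))e^{-t}$, which both traps orbits inside $\Sigma_p\times\Sigma_q$ and quantitatively controls how fast they can approach $\{H=0\}=\{(\bp,\bq)\}$. Second, upper semi-continuity of $\BR_p,\BR_q$, together with the transversal nature of each crossing guaranteed by (\ref{trans3}), implies that initial data in a small neighbourhood pass through the same finite sequence of strata and experience crossing times that depend continuously on the initial condition. On each affine segment the flow is automatically continuous in initial data, and patching the segments at transversal crossings (and using the continuity at subgame equilibria from Lemma~\ref{lem:contatNE}) yields continuity of $\phi_t$ on $\Sigma_p\times\Sigma_q\setminus\{(\bp,\bq)\}$.

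The main obstacle is excluding pathological behaviour at the stratum boundaries: ruling out that an orbit splits into several valid continuations when it enters a stratum of codimension $\ge 2$, or that it oscillates between adjacent strata infinitely often in finite time (a Zeno-type phenomenon that would defeat both uniqueness and continuity). The transversality assumption (\ref{trans3}) is tailored precisely to make every stratum intersection transverse in the general-position sense, so that each orbit meets only finitely many codimension-one walls before either entering a lower-dimensional stratum permanently (to be handled by induction) or being swept toward the global equilibrium by the exponential Lyapunov decay, at which point Lemma~\ref{lem:contatNE} supplies the final continuity statement.
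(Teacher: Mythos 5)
Your proposal is in the right spirit but has a genuine gap precisely where the paper's argument does its real work: the analysis of orbits that approach, or pass near, a stratum $Z_p\times Z_q$ on which \emph{both} $\BR_p$ and $\BR_q$ are multivalued. Your description of stratum crossings — ``either the affine target lies strictly across the face, so the orbit transversally enters the new open region, or both targets point into the face, so the orbit is captured'' — is essentially a two-case sliding-mode dichotomy, and it does not account for what actually happens: orbits that start slightly off $Z_p\times Z_q$ are neither captured by it nor expelled transversally, but \emph{spiral around it} (see Figure~\ref{fig:quad} and its caption). The spiraling behaviour means that your continuity argument — ``initial data in a small neighbourhood pass through the same finite sequence of strata and experience crossing times that depend continuously'' — is false. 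An orbit near such a $Z_p\times Z_q$ may cross infinitely many codimension-one walls in a bounded time interval as it winds around the indifference set, so you cannot prove continuity by patching finitely many affine pieces at transversal crossings.

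The mechanism the paper uses, which your proposal omits entirely, is the explicit complementary decomposition $(p,q)=(\tilde p,\tilde q)\oplus(\hat p,\hat q)$ near $Z_p\times Z_q$, with $(\tilde p,\tilde q)\in (Z_p-a)\times(Z_q-b)$ and $(\hat p,\hat q)$ in the convex hull of the target points. Transversality (specifically that $k=l$ and the two families of affine subspaces meet in a single point $(a,b)$ with complementary dimensions) is what makes this a direct sum decomposition. The differential inclusion then decouples: $\tilde p(t)=e^{-t}\tilde p(0)$ is a pure exponential contraction toward the stratum, while $(\hat p,\hat q)$ satisfies the restricted differential inclusion (\ref{eq:genBRred3}) on the subgame and converges to $(a,b)$ by the Restriction Lemma. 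Uniqueness and continuity at any point of $Z_p\times Z_q$ then follow from (\ref{eq:tilde1})--(\ref{eq:genBRred3}) and the corresponding properties of the restricted flow, not from a finite-strata counting argument. You cite the Restriction Lemma and Lemma~\ref{lem:contatNE}, so you have the right tools in hand, but you apply them only to orbits \emph{on} the degenerate strata; the essential point, and the one that your proposal would need to add to be correct, is that the decomposition also controls nearby orbits \emph{off} the strata and thereby yields continuity despite the non-locally-finite crossing structure.

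A secondary, smaller point: the dimension count $d=l-k$ (so that $l<k$ forces escape, $l>k$ forces escape of the $q$-component, and only $l=k$ permits the orbit to stay) is what rules out sliding on configurations other than the complementary-dimension case, and your proposal needs this explicitly to justify the claim that the Restriction Lemma situation is the only nontrivial case.
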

\begin{proof}
Let us first show that the above condition is enough to 
get that orbits move transversally (and with positive speed) 
through  the sets in which  either $\BR_q(p)$ or $\BR_p(q)$ is multivalued, but  {\em not} both. 
In order to be definite, assume
that $\BR_q(p)$ is multivalued, say $\BR_q(p)\subset [e^q_l,e^q_{l'}]$, and that $\BR_p(q)=\{e^p_i\}$. 
Note that the set of  $p\in \Sigma_p$ for which 
$\BR_q(p)\supset [e^q_l,e^q_{l'}]$ is contained in the
set $Z_p(l,l')=\{p\in \Sigma_p; \BR_q(p)=[e^q_l,e^q_{l'}]\}$.
By assumption the transversality assumption $X(e_i)+\alpha+\bp$ is not contained
in $Z_p(l,l')$ and so by the form of the vector field (\ref{eq:genBR}), orbits 
move off the indifference space $Z_{l,l'}$ with positive speed.



Let us next consider the situation that both 
$\BR_q(p)$ or $\BR_p(q)$ are multivalued. To analyze this situation, consider $(p,q)$ where 
$\BR_q(p)=[e^q_{j_1},e^q_{j_2},\dots,e^q_{j_k}]$ and $\BR_p(q)=[e^p_{i_1},e^p_{i_2},\dots,e^p_{i_l}]$.
So $p\in Z_p(j_1,\dots,j_k)$ and
$q\in Z_q(i_1,\dots,i_l)$. Let us abbreviate these spaces as $Z_p$ and $Z_q$.  Starting from $(p,q)$,
the $p$ component can move (by the form of the vector field)
towards any convex combination of $X(e_{i_1}^p)+\alpha \bp,\dots,X(e_{i_l}^p)+\alpha \bp$.
Hence the dimension of the space of directions in which $p$ can move (starting from
$(p,q)$) while staying in $Z_p$ is equal to the dimension $d$ of $Z_p(j_1,\dots,j_k)\cap 
[X(e_{i_1}^p)+\alpha \bp,\dots,X(e_{i_l}^p)+\alpha \bp]$. 
By the the above transversality assumption  $m-1-d=\codim(Z_p\cap [X(e_{i_1}^p)+\alpha \bp,\dots,X(e_{i_l}^p)+\alpha \bp])$
is equal to 
$$
\codim(Z_p)+\codim( [X(e_{i_1}^p)+\alpha \bp,\dots,X(e_{i_l}^p)+\alpha \bp])=(k-1)+(m-1-(l-1))=(m-1)+(k-l).$$
In particular, $d=l-k$ and so 
if $l<k$ then $d<0$ and the intersection is empty:  the vector field moves $p$ immediately (i.e. with 
positive speed and transversally) 
off $Z_p$.  So to stay inside $Z_p$ we need $l\ge k$. (Note that even if $l\ge k$ the intersection can be empty because
$[X(e_{i_1}^p)+\alpha \bp,\dots,X(e_{i_l}^p)+\alpha \bp]$ is not a linear space, but only a simplex within such a linear space. If the intersection is empty, then the orbit still moves immediately off $Z_p$.)
Similarly interchanging the role of $p$ and $q$,
the dimension of the space of directions $q$ can move while staying in $Z_q$  is at most $k-l$.
It follows that only when $k=l$ it is possible for the orbit through 
$(p,q)$ to stay within $Z_p\times Z_q$.
In other words, then 
$$Z_p\cap 
[X(e_{i_1}^p)+\alpha \bp,\dots,X(e_{i_l}^p)+\alpha \bp]\mbox{ and } 
Z_q\cap 
[Y(e_{j_1}^q)+\alpha \bq,\dots,Y(e_{j_k}^q)+\alpha \bq]$$
both consist of one point, say $(a,b)$.
In other words,  
\begin{equation}\begin{array}{rl}
&Z_p\times Z_q\mbox{ and }
[X(e_{i_1}^p)+\alpha \bp,\dots,X(e_{i_l}^p)+\alpha \bp]\times [Y(e_{j_1}^q)+\alpha \bq,\dots,Y(e_{j_k}^q)+\alpha \bq]\\
 \\
&\mbox{intersect in a unique point $(a,b)$ and have complimentary dimensions}
\\
&\mbox{(as subsets of $\Sigma_p\times \Sigma_q$).}
\end{array}
\label{eq:compl}
\end{equation}
So there is at most one orbit starting from $(p,q)$ 
which stays within $Z_p\times  Z_q$ (the orbit moving towards $(a,b)$). 
Note that even if there exists an orbit through $(p,q)$ which stays within $Z_p\times Z_q$
there may be another solution which moves off this set. We will show that an 
orbit which starts slightly off the set $Z_p\times Z_q$ spirals around it, as in the middle diagram
in Figure~\ref{fig:coiling}.
Here we use the previous lemma. In this way, it will follow that the flow becomes continuous and unique.

So let prove continuity at a point $(p_0,q_0)$ in $Z_p\times Z_q$
where we use the notation from before. 
Notice that we have shown that only when $k=l$ an orbit through $(p_0,q_0)$
does not necessarily move transversally through $Z_p\times Z_q$.
Near $(p_0,q_0)$  only strategies $j_1,\dots,j_k$
are optimal for  $q$ and strategies $i_1,\dots,i_k$
are optimal for $p$.  
By (\ref{eq:compl}) each $(p,q)$ near $(p_0,q_0)$
can be uniquely decomposed as 
$$(p,q)=(\tilde p,\tilde q)\oplus (\hat p,\hat q),$$
where $(\tilde p,\tilde q)\in (Z_p-a)\times (Z_q-b)\subset T\Sigma_p\times T\Sigma_q$
and $(\hat p,\hat q)\in [X(e_{i_1}^p)+\alpha \bp,\dots,X(e_{i_l}^p)+\alpha \bp]\times [Y(e_{j_1}^q)+\alpha \bq,\dots,Y(e_{j_k}^q)+\alpha \bq]$. Note that in this unique decomposition, \begin{equation}
(p,q)=(\tilde p,\tilde q)\oplus (a,b) \mbox{ for any } (p,q)\in Z_p\times Z_q.\label{split}
\end{equation}
Rewrite (\ref{eq:genBR}) as 
\begin{equation}
\dfrac{d\tilde p}{dt}+\dfrac{d\hat p}{dt}\in -\tilde p + (X\BR_p(q)+\alpha \bp -\hat p),\quad  \dfrac{d\tilde q}{dt}+\dfrac{d\hat q}{dt}\in -\tilde q + (Y\BR_q(p)+\alpha \bq -\hat q).
\label{eq:genBR2}
\end{equation}
Note that $\BR_p(q)=\BR_p^{i_1,\dots,i_k}(\hat q)$ and $\BR_q(p)=\BR_q^{j_1,\dots,j_k}(\hat p)$,
where we use the notation from the previous lemma. 
Hence an orbit $t\mapsto (p(t),q(t))$ through $(p(0),q(0))=(p,q)$ of the differential inclusion is of the form
\begin{equation}
p(t)=\tilde p(t)+\hat p(t),\quad \quad  \quad\, q(t)=\tilde q(t)+\hat q(t)
\end{equation}
\begin{equation}
\dfrac{d\tilde p}{dt}=-\tilde p(t),\quad \quad  \quad\, \dfrac{d\hat q}{dt}=-\tilde q(t)
\end{equation}
\begin{equation}
 \dfrac{d\hat p}{dt}\in X\BR_p^{i_1,\dots,i_k}(\hat q)+\alpha \bp -\hat p,\quad \quad \quad
\dfrac{d\hat q}{dt} \in Y\BR_q^{j_1,\dots,j_k}(\hat p)+\alpha \bq -\hat q
\label{eq:genBRred}
\end{equation}
Since $(\tilde p(0),\tilde q(0))+(a,b)$ and  $(a,b)$ are both $Z_p\times Z_q$
we have that $(\tilde p(t),\tilde q(t))+(a,b)=e^{-t}(\tilde p(0),\tilde q(0))+(a,b)\in
Z_p\times Z_q$ and therefore 
$(\tilde p(t),\tilde q(t))\in (Z_p-a)\times (Z_q-b)$. Moreover,  from the previous lemma,
$(\hat p(t),\hat q(t))\in [X(e_{i_1}^p)+\alpha \bp,\dots,X(e_{i_l}^p)+\alpha \bp]\times [Y(e_{j_1}^q)+\alpha \bq,\dots,Y(e_{j_k}^q)+\alpha \bq]$.
So $(p(t),q(t))$ splits as before
as $(p(t),q(t))=(\tilde p(t),\tilde q(t))\oplus (\hat p(t),\hat q(t))$ and 
the above equations are equivalent to 
\begin{equation}
p(t)=e^{-t} \tilde p(0)+\hat p(t),\quad \quad  \quad\, q(t)=e^{-t} \tilde q(0)+\hat q(t).
\label{eq:tilde1}\end{equation}
and 
\begin{equation}
 \dfrac{d\hat p}{dt}\in X\BR_p^{i_1,\dots,i_k}(\hat q)+\alpha \bp -\hat p,\quad \quad \quad
\dfrac{d\hat q}{dt} \in Y\BR_q^{j_1,\dots,j_k}(\hat p)+\alpha \bq -\hat q.
\label{eq:genBRred3}
\end{equation}
By the previous lemma, orbits under (\ref{eq:genBRred3}) converge to $(a,b)$. 
From (\ref{split}) it follows that the flow under (\ref{eq:tilde1}) and (\ref{eq:genBRred3}) are unique and continuous at any 
$(p,q)\in Z_p\times Z_q$. Since the flow is continuous and unique elsewhere,
the flow is globally unique and continuous. 
\end{proof}

\begin{figure}[htp]
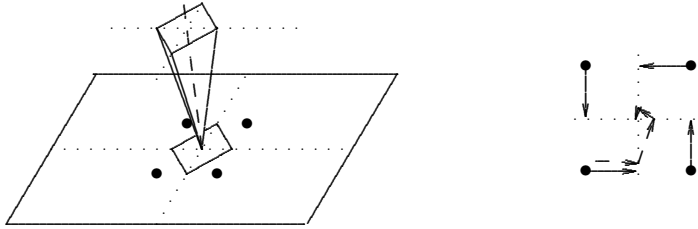
 \hfil
\beginpicture
\dimen0=0.2cm \dimen1=0.07cm
\setcoordinatesystem units <\dimen0,\dimen0>
\setplotarea x from 0 to 25, y from -2 to 13
\plot 0 0 20 0 26 10 6 10 0 0 /

\setdots 
\plot 3 5 23 5 /
\plot 10 0 16 10 /
\setsolid
\plot 13 5 11 11.33 14 13 13 5 10 13 11 11.33 /
\plot 10 13 13 14.66 14 13 /
\setdots
\plot 7 13 20 13 /
\plot 10 9.66 14 16.33 /
\plot 12 3.33 15 5 14 6.66 11 5 12 3.33 /
\setsolid
\plot 12 3.33 15 5 14 6.66 13.2 6.22 /
\plot 12.5 5.82 11 5 12 3.33 /
\multiput {$\bullet$} at 10 3.33 14 3.33 12 6.66 16 6.66 /
\setdashes
\plot 13 5 12 13 11.8 14.6 11.7 15.4 /
\setcoordinatesystem units <\dimen1,\dimen1> point at -120 -20
\setplotarea x from -10 to 10, y from -10 to 10
\setlinear \setdots
\plot -12 0 12 0 /
\plot 0 12 0 -12 / \setsolid
\put {$\bullet$} at 10 10
\put {$\bullet$} at -10 10
\put {$\bullet$} at -10 -10
\put {$\bullet$} at 10 -10
\arrow <5pt> [0.2,0.4] from 10 10 to 0.5 10
\arrow <5pt> [0.2,0.4] from -10 10 to -10 0.5
\arrow <5pt> [0.2,0.4] from -10 -10 to -0.5 -10
\arrow <5pt> [0.2,0.4] from 10 -10 to 10 -0.5
\setdashes
\arrow <5pt> [0.2,0.4] from -8 -8 to 0 -8.5
\arrow <5pt> [0.2,0.4] from 0 -8.5 to 3 0
\arrow <5pt> [0.2,0.4] from 3 0 to 0 2
\arrow <5pt> [0.2,0.4] from 0 2 to -0.5 0
\endpicture
\caption{\label{fig:quad}
{\small This picture illustrates what happens near $Z_p\times Z_q$. The dashed line denotes this set.
The targets are drawn marked as $\bullet$,
and by choosing appropriate combinations of the targets orbits can move along the dashed line.
Nearby orbits spiral along some cone towards the apex of the cone, always
aiming for one of the points marked as $\bullet$. Note that the vector field
on the cone is {\bf not} close to the vector field along the dashed line.
The reason orbits move like this is that in restriction
to the target plane, the dynamics has this property.}}
\end{figure}

\begin{remark}
This use of $H$ as a Lyapunov function for the differential inclusion
$\dot p=\BR_p(q)-p, \dot q=\BR_q(p)-q$
goes back to Brown \cite{Brown49} and \cite{Brown51} and was 
explicitly mentioned by Hofbauer in \cite{Hofbauer95}. 
 Harris \cite{Harris98} used a related method to analyse the speed of convergence to the value of the game. 
\end{remark}
 
\section{Projecting the flow on level sets of $H$}
Consider the map $\pi\colon \Sigma\times \Sigma \setminus (\bp,\bq) \to H^{-1}(\rho)$
from (\ref{eq:pi}).  In this section we will show that
orbits of (\ref{eq:genBR}) project to orbits on the $(n+m-3)$-dimensional topological sphere 
$H^{-1}(\rho)$. 

\begin{prop}\label{prop:projection}
The map $\pi$ projects orbits of (\ref{eq:genBR}) onto
orbits  of 
\begin{equation}
\dfrac{d\tilde p}{ds}=X\BR_p(\tilde q)\,+(\alpha-1)\bp \, , \,\, \dfrac{d\tilde q}{ds}=Y\BR_q(\tilde p)+(\alpha-1)\bq.
\label{eq:inducedflow}
\end{equation}
Here we use the reparametrization $s=e^t$.
\end{prop}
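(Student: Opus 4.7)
The plan is to obtain an explicit formula for the projection $\pi$ by exploiting the fact that $H$ is linear along half‑rays through the Nash equilibrium, and then to differentiate that formula along an orbit of (\ref{eq:genBR}), using Lemma~\ref{lem:contatNE} to replace $H(p(t),q(t))$ by $ce^{-t}$.

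First I would establish the scaling identity
\[
H\bigl(\bp+\mu(p-\bp),\,\bq+\mu(q-\bq)\bigr)=\mu\,H(p,q)\qquad\text{for every }\mu\ge 0.
\]
This is a strengthening of the inequality proved in the sphere proposition in Section~\ref{sec:ThmsHamilton}: along the ray the best responses $\BR_p,\BR_q$ do not change, because the $i$th component of $\M(\bq+\mu(q-\bq))$ equals $\nu(1-\mu)+\mu(\M q)_i$ (where $\nu=\bp'\M\bq$ is the value of the game), whose argmax agrees with $\argmax_i(\M q)_i$; similarly for $\BR_q$. Hence in the formula $H=\BR_p(q)'\M q-p'\M\BR_q(p)$ only the arguments $q,p$ scale, and the terms involving $\bp,\bq$ drop out after using $\bp'\M=\nu\ii'$ and $\M\bq=\nu\ii$. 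It follows in particular that $\BR_p(\tilde q)=\BR_p(q)$ and $\BR_q(\tilde p)=\BR_q(p)$, a fact we will reuse. From the scaling identity,
\[
\pi(p,q)=(\bp,\bq)+\frac{\rho}{H(p,q)}\bigl((p,q)-(\bp,\bq)\bigr).
\]

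Next I would take an orbit $t\mapsto(p(t),q(t))$ of (\ref{eq:genBR}) and compute the derivative of $(\tilde p(t),\tilde q(t)):=\pi(p(t),q(t))$. Writing $\mu(t):=\rho/H(p(t),q(t))$, Lemma~\ref{lem:contatNE} gives $H(p(t),q(t))=ce^{-t}$ so $\mu(t)=(\rho/c)\,e^{t}$ and $d\mu/dt=\mu$. Therefore
\[
\frac{d\tilde p}{dt}=\mu\,(p-\bp)+\mu\,\frac{dp}{dt}\in\mu\bigl[(p-\bp)+X\BR_p(q)+\alpha\bp-p\bigr]=\mu\bigl[X\BR_p(\tilde q)+(\alpha-1)\bp\bigr],
\]
using $\BR_p(\tilde q)=\BR_p(q)$ in the last step, and symmetrically
\[
\frac{d\tilde q}{dt}\in\mu\bigl[Y\BR_q(\tilde p)+(\alpha-1)\bq\bigr].
\]

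Finally, the reparametrization $s=e^{t}$ satisfies $ds/dt=e^{t}$, which differs from $\mu(t)$ only by the constant $\rho/c$; absorbing this constant (or equivalently rescaling the orbit’s initial time) yields
\[
\frac{d\tilde p}{ds}=X\BR_p(\tilde q)+(\alpha-1)\bp,\qquad\frac{d\tilde q}{ds}=Y\BR_q(\tilde p)+(\alpha-1)\bq,
\]
which is exactly (\ref{eq:inducedflow}). Because $(\tilde p(t),\tilde q(t))\in H^{-1}(\rho)$ by construction, these equations are to be interpreted on the level set.

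The only genuinely non‑routine step is the linearity of $H$ along rays through $(\bp,\bq)$; once one observes that this is precisely what the completely mixed equilibrium identities $\bp'\M=\nu\ii'$, $\M\bq=\nu\ii$ give, everything else is a short calculation. The remaining care is notational: tracking the fact that $\pi$ is well defined only away from $(\bp,\bq)$, and checking that one may reparametrize despite the set‑valued right hand side (which is permissible because on the open strata where $\BR_p,\BR_q$ are single valued the flow is genuinely differentiable, and continuity/uniqueness at the transition sets was established in Theorem~\ref{thm:trans}).
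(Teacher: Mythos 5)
Your proof is correct, and it reaches the same formula the paper does but by a more global route. The paper works piecewise: on an interval where the best responses are single‑valued and constant it writes the solution of (\ref{eq:genBR}) explicitly as an exponential interpolation towards the target, posits $\tilde p(t)=(1-s(t))p(t)+s(t)\bp$ (and similarly for $q$), and solves the level‑set equation $H(\tilde p(t),\tilde q(t))=H(p,q)$ for $s(t)$, using $\bp'\M\BR_q(p)=\bp'\M\bq=\BR_p(q)'\M\bq$ and $\M Y=X'\M$ to make the non‑affine terms drop out; this yields $s(t)=1-e^t$, from which the explicit affine‑in‑$e^t$ formula for $\tilde p(t)$ and hence (\ref{eq:inducedflow}) is read off. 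You instead first make explicit the scaling identity $H(\bp+\mu(p-\bp),\bq+\mu(q-\bq))=\mu H(p,q)$ (which strengthens the one‑sided inequality shown in the sphere proposition to an exact equality, using the completely‑mixed relations $\bp'\M=\nu\ii'$, $\M\bq=\nu\ii$ and the constancy of $\BR_p,\BR_q$ along rays), deduce the closed‑form projection $\pi(p,q)=(\bp,\bq)+\tfrac{\rho}{H(p,q)}((p,q)-(\bp,\bq))$, and differentiate it using $H(t)=ce^{-t}$ from Lemma~\ref{lem:contatNE}. That bypasses both the explicit piecewise solution and the separate level‑set computation, replacing them with one chain rule; the only extra bookkeeping it introduces is the constant $\rho/c$ in the reparametrization, which you correctly absorb into $s$. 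Both arguments rest on the same two facts (the equilibrium identities for $\bp,\bq$ and $dH/dt=-H$, the latter in turn resting on $\M Y=X'\M$), so the content is the same, but your packaging is tidier and makes visible the geometric reason the projection behaves so well: $H$ is exactly linear along rays through $(\bp,\bq)$.
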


\begin{remark}
There are a few special cases that deserve attention:
\begin{enumerate}
\item Consider the Hamiltonian vector field associated to $\M$ and $A$ so that  $A=\M$.
Then,  by Lemma~\ref{lem:AM},  this vector field takes the form of 
as in the previous proposition with  $X=id$ and $Y=id$ and $\alpha=0$.
\item Consider a general Hamiltonian vector field from Theorem~\ref{thmA}
and define $X=P_p\, A'^{-1} \M'$ and $Y=P_q\, A^{-1} \M$. Then
$\M Y=X'\M$ follows from property 2 in Theorem~\ref{thmA}, and we obtain 
a vector field as in the previous proposition taking $\alpha=1$. 
\item  For $\alpha=0$ and $X=id$ and $Y=id$ we obtain the best response
dynamics described in Section~\ref{sec:games}. 
\end{enumerate}
\end{remark}

\begin{proof}
Let us consider a time interval $[t_1,t_2]$ during
which the orbit of  (\ref{eq:genBR}) lies on a straight line, 
i.e. so that $t\mapsto \BR_p(q(t))$
and $t\mapsto \BR_q(p(t))$ are both constant (and single-valued) for all $t\in (t_1,t_2)$. 
Let us write $p=p(t_1)$, $q=q(t_1)$ and denote the values of 
$\BR_p(q(t)),\BR_q(p(t))$ for $t\in (t_1,t_2)$ by   $\BR_p(q)$ and $\BR_q(p)$.
Then for $t\in [t_1,t_2]$ the orbit of (\ref{eq:genBR}) is 
equal to 
$$p(t)=e^{-t}p + (1-e^{-t})(X\BR_p(q)+\alpha \bp)\,,\,\, q(t)=e^{-t}q + (1-e^{-t})(Y\BR_q(p)+\alpha \bq).$$
Let us  project this orbit by the map $\pi$  onto $H^{-1}(H(p,q))$ and write $\tilde p(t)=\pi p(t)$
and $\tilde q(t)=\pi q(t)$.
Then
$$\tilde p(t)=(1-s(t))p(t) + s(t) \bp\, ,\,\,
\tilde p(t)=(1-s(t))q(t) + s(t) \bq$$
with $s(t)\in \R$ chosen so that the curve remains within a level set of the function $H$.
This means that
\begin{equation*}\begin{array}{ll}
&\BR_p(q)'\, \M \left[(1-s(t))\left( e^{-t}q + (1-e^{-t})(Y\BR_q(p)+\alpha \bq)\right) + s(t) \bq \right] - \\
& \\
&\quad  \left[ (1-s(t))\left( e^{-t}p + (1-e^{-t})(X\BR_p(q)+\alpha \bp)\right) + s(t) \bp \right]' \M \BR_q(p)  =\\
& \\
&\quad =  \BR_p(q)'\, \M q - p' \M \BR_q(p).\end{array}
\end{equation*}
Note that $\bp' \M \BR_q(p)=\bp' \M \bq=\BR_p(q)'\, \M \bq$ and 
$\M Y=X'\M$ 
and therefore the previous equation  is equivalent to 
$$(1-s(t)) e^{-t}  \left[ \BR_p(q)'\, \M q  - p' \M \BR_q(p) \right]
=  \BR_p(q)'\, \M q - p' \M \BR_q(p),$$
i.e., to $1-s(t)=e^t$ and $s(t)=1-e^t$. 
So the orbit becomes 
\begin{equation*}\begin{array}{ll}
\tilde p(t) \! &=e^t p(t) + (1-e^t)\bp=
e^t e^{-t}p + (e^t-1)(X\BR_p(q)+\alpha p) + (1-e^t)\bp\\
& \\
&=p + (e^t-1) (X\BR_p(q) +(\alpha-1) \bp)\quad \mbox{and } \\
& \\
\tilde q(t)& =q + (e^t-1) (Y\BR_q(p) +(\alpha-1) \bq).
\end{array}
\end{equation*}
Since this holds on all pieces of orbits, we get 
$$\dfrac{d\tilde p}{dt}=e^t(X\BR_p(\tilde q)\,+(\alpha-1)\bp) \, , \,\, \dfrac{d\tilde q}{dt}=e^t(Y\BR_q(\tilde p)+(\alpha-1)\bq).$$
If we use the time-parametrization $s=e^t$ this gives (\ref{eq:inducedflow})
which obviously is a translation flow, because the right hand side is piecewise
constant. 
\end{proof}

\section{The proof of the two Main Theorems}
Part 1 of the Main Theorem is proved in Proposition~\ref{prop:unique1}.
Parts 2,3 and 5 are proved in Theorem~\ref{thmA}.
Finally, part 4 follows from  Theorem~\ref{thm:trans}, Proposition~\ref{prop:projection}
and the remark below this proposition.
Theorem~\ref{thm:trans}, Proposition~\ref{prop:projection} also prove
the 2nd Main Theorem.

\section{A Hamilton vector field with random walk behaviour}\label{sec:randomwalk}
 The Hamiltonian vector fields which appear in this way can have rather
remarkable behaviour. We illustrate this by considering an example in the next theorem.
This example is part of a family of systems studied in \cite{SS09} in the context
of dynamical systems associated to game theory. 
Consider 
\begin{equation}
B= \left(\begin{array}{ccc} 1 & 0 & \beta \\ \beta & 1 & 0 \\ 0 & \beta & 1
\end{array} \right) 
\label{eq:example}
\end{equation}
where $\beta\ne -1,1$
and let $\M$ be some matrix close to $B$. Note that $B$ satisfies the transversality
assumption  (\ref{eq:trans2}) 
(here we use that $\det(B)=1+\beta^3$). Note that  $B$
has a unique completely mixed Nash equilibrium $(\bar p,\bar q)\in \Sigma\times \Sigma$
(the notion of completely mixed Nash equilibrium was defined in Definition (\ref{def:compmixed}); it corresponds to a vector with all components equal to $1/3$.

\subsection{A Hamilton vector field with random walk behaviour}
\label{sec:example}

As in Theorem~\ref{thmA}, let $X_H$ the Hamiltonian vector field corresponding
to a matrix $M$ close to the above matrix $B$ and the symplectic two-form $\omega=\sum_{ij}a_{ij}dp_i\wedge dq_j$ where $(a_{ij})$ are the coefficients of $A:=\M$. The flow of this  vector field acts as a random walk:

\begin{theorem}
\label{thmB}
Let $H$ be the Hamiltonian associated to a matrix $\M$ as in (\ref{eq:hamilton}), 
where $\M$ is sufficiently close to the matrix $B$ defined
in (\ref{eq:example}) where $\beta$ is taken to sufficiently close to the golden mean  
$\sigma:=(\sqrt{5}-1)/2\approx 0.618$.

Then the following holds. The set $H^{-1}(1)$ is homeomorphic to $S^3$ (it consists of pieces of hyperplanes). The flow $\phi_t$  of $X_H$ has a periodic orbit $\Gamma$ with the following properties.
$\Gamma$ is a hexagon in $H^{-1}(1)\subset \R^4$.
If one takes the first return map $F$ to a section $Z\subset H^{-1}(1)$ 
transversal to $\Gamma$
(i.e. a two-dimensional surface in $H^{-1}(1)$ containing some $x^0\in \Gamma$),   
then the following holds.
\begin{itemize}
\item For each $k\in \N$,  
there exists a sequence of periodic points $x_n\in Z$, $n=1,2,\dots$, of exactly period $k$
of the first return map to $Z$  converging to $x^0$ as $n\to \infty$.
\item The first return map $F$ to $Z$ has infinite topological entropy.
\item The dynamics acts as a random-walk. 
More precisely, there exist pairwise disjoint annuli $A_n$ in $Z$ (around $x^0$  so that 
$\cup_{n\ge 0} A_n\cup \{x^0\}$ is a neighbourhood of $x^0$ in $Z$) shrinking geometrically to $x_0$  so that  for each sequence $n(i)\ge 0$ with 
$|n(i+1)-n(i)|\le 1$ 
there exists a point $z\in Z$ so that $F^i(z)\in A_{n(i)}$ for all $i\ge 0$.
\end{itemize}
\end{theorem}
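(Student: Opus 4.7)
The plan is to analyse the first return map $F\colon Z\to Z$ directly, using that the flow $\phi_t$ is piecewise a translation on $H^{-1}(1)$ and that $\Gamma$ is a closed hexagonal curve made of six linear segments. First I would set up local coordinates in the topological $3$-sphere $H^{-1}(1)$ adapted to $\Gamma$: take $Z$ to be the intersection of $H^{-1}(1)$ with a two-dimensional hyperplane transverse to $\Gamma$ at $x^0$, and pull back the codimension-one indifference walls of $H$ (where $X_H$ becomes multi-valued) through $\phi_t$ until they cut $Z$ into a finite collection of convex pieces. By the piecewise-affine structure given in Theorem~\ref{thmA}(5) and Theorem~\ref{thm:trans}, on each such piece the return time is a constant affine function and $F$ is an affine map with unit Jacobian; by construction $F(x^0)=x^0$.

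Next I would compute the cyclic composition of the six piecewise-constant vector-field vectors that realise $\Gamma$, and extract from it the ``sectoral'' linear parts of $F$ near $x^0$. Each such linear part is unimodular (area-preserving), but they differ from sector to sector, so $F$ is not differentiable at $x^0$. The choice of $\beta$ sufficiently close to $\sigma=(\sqrt5-1)/2$ is used to force a parabolic-type degeneracy: at $\beta=\sigma$ the monodromy has trace $2$ and the induced combinatorics of the indifference cone-field is of Fibonacci (golden-mean) type. From this one derives the crucial renormalisation statement: there is an area-preserving homothety $h\colon Z\to Z$ of some ratio $1/\lambda<1$ that conjugates $F$ restricted to a fundamental annulus $A_0\ni x^0$ to $F$ restricted to the smaller annulus $h(A_0)$. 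Setting $A_n:=h^n(A_0)$ then produces the desired pairwise disjoint annuli shrinking geometrically to $x^0$, with $F|_{A_n}$ conjugate to $F|_{A_0}$ for every $n$.

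With this self-similar picture in place, the three conclusions of the theorem reduce to a symbolic coding argument performed inside the fundamental annulus $A_0$. I would show that the indifference curves of $F$ partition each $A_n$ into finitely many subregions whose $F$-images cover (in a surjective, measure-respecting way) each of $A_{n-1}$, $A_n$ and $A_{n+1}$; area preservation together with the explicit piecewise-affine form of $F$ makes each of the three transitions non-empty and open. Given any sequence $n(i)\ge 0$ with $|n(i+1)-n(i)|\le 1$, a standard nested-compact-sets argument then produces a point $z$ with $F^i(z)\in A_{n(i)}$ for all $i$. Periodic sequences yield periodic orbits of any prescribed combinatorial period $k$ (hence, by varying the scale $n$, infinitely many of them accumulating at $x^0$), and the three-way branching inside a single annulus embeds a full $3$-shift into the dynamics, giving infinite topological entropy.

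The main obstacle is the renormalisation step in the second paragraph: one must show that the indifference curves of $F$ and of all its iterates organise into an exactly self-similar pattern on the nested annuli $A_n$, and that the induced Markov-like transition structure really has the three non-trivial continuations $n\mapsto n-1, n, n+1$ at every level. This is where the delicate dependence on $\beta$ near $\sigma$ enters, via a continued-fraction / Fibonacci combinatorial argument on the way the indifference lines return to $Z$; once it is in place everything else is essentially a routine symbolic argument. The detailed verification of this self-similar combinatorics is carried out in \cite{SS09}.
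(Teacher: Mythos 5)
The paper itself does not give a full proof of Theorem~\ref{thmB}; it gives an informal outline in Section~7 and defers all details to \cite{SS09}, and you do the same at the end of your argument. So the right comparison is between your proposed \emph{mechanism} and the mechanism sketched in the paper, and there the two diverge in a way that matters.

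The paper's model for the return map is explicit: after identifying $Z$ with $\R^2$ and $\Gamma\cap Z$ with $0$, the return map is (a composition of maps) of the form
$F(z)=M\circ R_{1/\|z\|}(z)$,
where $\|z\|=|z_1|+|z_2|$, $R_\theta$ is the $\|\cdot\|$-norm-preserving ``rotation'', and $M$ is a fixed unimodular matrix of \emph{saddle type} with eigenvalues $2$ and $1/2$ (trace $5/2$, not $2$). Your claim that tuning $\beta$ near $\sigma$ produces a \emph{parabolic} degeneracy with monodromy of trace~$2$ contradicts this: the whole point of the construction is that $M$ is hyperbolic, so that it genuinely expands/contracts the radial coordinate and can move a point from $A_n$ to $A_{n-1}$ or $A_{n+1}$. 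A trace-$2$ shear would not produce those radial transitions.

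The renormalisation step is the more serious gap. You posit an area-preserving homothety $h$ of ratio $1/\lambda$ conjugating $F$ on $A_0$ to $F$ on $h(A_0)$, and then set $A_n=h^n(A_0)$. For the model $F(z)=M\circ R_{1/\|z\|}(z)$ this cannot work: if $h(z)=rz$ then
$F(h(z))=M\,R_{1/(r\|z\|)}(rz)=r\,M\,R_{1/(r\|z\|)}(z)$,
whereas $h(F(z))=r\,M\,R_{1/\|z\|}(z)$, and $R_{1/(r\|z\|)}\ne R_{1/\|z\|}$ (the difference of rotation amounts, $\tfrac{1}{\|z\|}(r^{-1}-1)$, is not an integer multiple of the rotation period uniformly in $\|z\|$). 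Replacing $h$ by a homothety composed with a fixed rotation does not help either, because a generic saddle $M$ does not commute with rotations. So there is no exact self-similarity of $F$ across the annuli, and the Markov structure you want cannot be obtained by this route.

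What the paper's outline actually uses is the opposite of self-similarity: the angular speed $1/\|z\|$ \emph{blows up} as $z\to 0$, so as $\|z\|$ sweeps one annulus $A_n$ the rotation amount sweeps through an ever larger number of full turns (growing like $r^{-n}$). This ``angular-memory erasure'' is what makes the image of $A_n$ under $R_{1/\|z\|}$ essentially uniformly distributed in angle, and \emph{then} the fixed saddle $M$ (eigenvalues $2$, $1/2$, with $r$ chosen accordingly) produces the $A_n\to A_{n-1},A_n,A_{n+1}$ transitions, with the transition each point actually makes being (almost) independent of which thin sub-annulus of $A_n$ it started in. This is a quantitative, asymptotic ``forgetting'' statement, not a conjugacy. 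The Fibonacci/continued-fraction combinatorics you invoke for $\beta\approx\sigma$ is not what this paper's outline appeals to either; in Section~7 of the paper $\sigma$ is brought in to make the rotation number of the second iterate at the elliptic period-two orbit irrational (the ``egg-shaped'' regions), a different part of the picture.

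Your set-up in the first paragraph (piecewise-affine return map, unit Jacobian, non-differentiability at $x^0$) and your final paragraph (nested-compact-sets argument given a three-way Markov covering of annuli, periodic itineraries giving periodic points of all periods accumulating at $x^0$, a $3$-shift giving infinite entropy) are sound and essentially match the intended structure. But the bridge between them --- how one obtains the three-way covering --- is not a renormalisation/self-similarity argument; it is the blow-up of the winding number near $\Gamma$ combined with a fixed saddle monodromy. That is the missing idea.
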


\bigskip
One obvious consequence of the random walking described in the theorem, is that the
stable and unstable manifold of the periodic orbit $\Gamma$ of the flow are rather unusual.
More precisely, take $\epsilon>0$ small,  let $\tau>0$, and define the
local stable set corresponding to rate $\tau$ as 
$$W^{s,\tau}_\epsilon(\Gamma):=\{x; \dist(\phi_t(x),\Gamma)\le \epsilon \mbox{ for all }t\ge 0\,\,\mbox{ and }\,\, 
\lim_{t\to \infty} \dfrac{1}{|t|} \log(\dist(\phi_t(x),\Gamma))\to \tau\}\quad $$
and the  local unstable set corresponding to rate $\tau$ as 
$$W^{u,\tau}_\epsilon(\Gamma):=\{x;  \dist(\phi_t(x),\Gamma)\le \epsilon \mbox{ for all }t\le 0 \,\,\mbox{ and } \,\,
\lim_{t\to -\infty} \dfrac{1}{|t|} \log(\dist(\phi_t(x),\Gamma))\to \tau\}.$$
Then the above system has for each $\epsilon>0$ and 
{\bf each} $\tau\ge 0$ close to zero,  that both $W^{s,\tau}_\epsilon(\Gamma)$ and 
$W^{s,\tau}_\epsilon(\Gamma)$ are non-empty in any neighbourhood of  $\Gamma$. 
This behaviour is completely different from that for  smooth differential equations in a three dimensional manifold with a hyperbolic periodic orbit: in that case, the stable 
(resp. unstable) sets are smooth manifolds and have the property that orbits converge to the periodic orbit in forward (resp. backward) time with a unique rate. In the above example, on the contrary,
for {\bf each} $\tau$  close to  zero, there are forward orbits which converge to $\Gamma$ with precisely rate $\tau$.

\medskip
For each $k\in \N$, the periodic orbits $x_n$ of period $n$ for the first return map 
mentioned in the first part of Theorem~\ref{thmB}, correspond to closed curves
$\gamma_n$ with period $T_n$ for which 
$$\gamma_n \to \Gamma, \,\, T_n\to kT\mbox{ as }n\to \infty.$$ In particular,  the set of periods of periodic orbits of the flow is certainly not discrete.

\subsection{Ouline of the proof of Theorem~\ref{thmB}}
Let us give an informal outline of the proof of Theorem~\ref{thmB} (full details can be found in 
\cite{SS09}). To do this, let us first  describe orbits of the flow $\phi_t$ near $\Gamma$.  Take local coordinates
in which $\Gamma\subset \{0\}\times \R\subset \R^2\times \R$ near some point $x_0$
of $\Gamma$. 
Let  $||z||=|z_1|+|z_2|$ be the  $l_1$ norm of $z=(z_1,z_2)\in \rz^2$,
\,\,  and let  $R_t$ be a rotation in $\R^2$ 
over angle $t$ leaving the 'circles'  in the $l_1$ norm
invariant (i.e. $||R_t(z)||=||z||$ for all $t$).
Then for $(z,0)\in \R^2\times \R$ and $t$ close to zero, one approximately has
$$\phi_t(z,0)=(R_{t/||z||},t)$$
and so during time $t$, the orbit spirals around $\Gamma$ approximately $t/||z||$ times.
In other words, the closer the orbit is to $\Gamma$, 
the tighter and faster the orbits spiral around $\Gamma$, see the left  two diagrams in  Fig~\ref{fig:coiling}. More precisely, the number of times the orbit
winds around $\Gamma$ during a time interval $t\in [0,1]$ is of the order
$1/||z||$ where $||z||$ is the distance of $z$ at time $t=0$. (Clearly, this
is only possible because the flow is not smooth.)

\begin{figure}[htp]
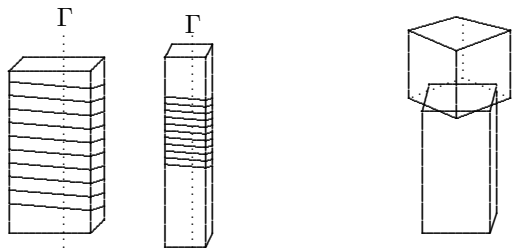
 \hfil
\beginpicture
\dimen0=0.18cm
\dimen1=0.09cm
\setcoordinatesystem units <\dimen0,\dimen0> point at -30 0
\setplotarea x from -4 to 4, y from -8 to 8
\setdots <1mm>
\plot 1 -7  1 9 /
\put {{$\Gamma$}} at 1.1  10
\setsolid
\plot -3 -6 3 -6 3 6 -3 6 -3 -6  /
\plot 3 -6 4 -5 4 7 3 6 /
\plot -3 6 -2 7 4 7 /
\setsolid
\plot -3 5.2 3 4.7 4 5 /
\plot -3 4.2 3 3.7 4 4 /
\plot -3 3.2 3 2.7 4 3 /
\plot -3 2.2 3 1.7 4 2 /
\plot -3 1.2 3 .7 4 1 /
\plot -3 .2 3  -0.3 4 0 /
\plot -3 -.8 3  -1.3 4 -1 /
\plot -3 -1.8 3  -2.3 4 -2 /
\plot -3 -2.8 3  -3.3 4 -3 /
\plot -3 -3.8 3  -4.3 4 -4 /
\setcoordinatesystem units <\dimen1,\dimen1> point at -80 0
\setplotarea x from 0 to 10, y from -8 to 8
\setdots <1mm>
\plot 1 -14  1 18 /
\put {{$\Gamma$}} at 1.1  19
\setsolid
\plot -3 -14 3 -14 3 14 -3 14 -3 -14  /
\plot 3 -14 4 -12 4 16 3 14 /
\plot -3 14 -2 16 4 16 /
\setsolid
\plot -3 8.2 3 7.7 4 8 /
\plot -3 7.2 3 6.7 4 7 /
\plot -3 6.2 3 5.7 4 6 /
\plot -3 5.2 3 4.7 4 5 /
\plot -3 4.2 3 3.7 4 4 /
\plot -3 3.2 3 2.7 4 3 /
\plot -3 2.2 3 1.7 4 2 /
\plot -3 1.2 3 .7 4 1 /
\plot -3 .2 3  -0.3 4 0 /
\plot -3 -.8 3  -1.3 4 -1 /
\plot -3 -1.8 3  -2.3 4 -2 /
\setcoordinatesystem units <\dimen1,\dimen1> point at -120 0
\setplotarea x from 0 to 10, y from -8 to 8
\plot -5 6 -5 -12  5 -12  5 6 -5 6 /
\plot -5 6 -4 10  6 10 5 6 /
\plot 6 9 6 -9 5 -12 /
\plot -7 8  0 5 8 8 /
\setdots <1mm>
\plot 8 8 1 11 -7 8 /
\plot 1 11 1 21 /
\setsolid
\plot -7 8 -7 18 0 15 0 5  /
\plot 0 15 8 18 8 8 /
\plot 8 18 0 20 /
\plot -7 18 0 20  /
\endpicture
\caption{\label{fig:coiling}
{\small  
In the left two figures,  orbits are drawn near a piece of $\Gamma$. Orbits spiral along
rectangular tubes around $\Gamma$. Orbits starting nearer 
to $\Gamma$ spiral with a finer pitch around $\Gamma$, see the 2nd figure on the middle.
Locally orbits spiral on these rectangular tubes, but it is certainly not true that 
orbits remain on topological tori around $\Gamma$, because the tube is skewed when it has fully
followed the entire orbit $\Gamma$, see the figure on the right.}}
\end{figure}

Moreover, the first return map   $P\colon Z\to Z$ near to $\Gamma$ has a very special form. 
If we identify $Z$ with $\rz^2$ and $\Gamma\cap Z$ with $0\in \rz^2$, then $P$
is  approximately a composition of maps of the form
\begin{equation}
F(z)= M\circ R_{1/||z||}(z) 
\label{eq:modelF}
\end{equation}
 where $R$ and $||z||$ are as above, and where $M$ is a matrix
 of the form $M=\left( \begin{array}{cc} 2 &  0 \\ 0 & 0.5 \end{array}\right)$.
 Note that $F$ is a homeomorphism which is non-smooth at $0$. 
 The image of a ray through $0$ under the map $F$ is a spiral, see the middle diagram in Figure~\ref{fig:fixedpoints}.
There exist $r\in (0,1)$ so that $F$ can send a point in the annulus $A_n:=\{z\in \rz^2; r^{n+1}\le ||z|| \le r^n\}$
to annuli $A_{n-1},A_n,A_{n+1}$ in a way which almost completely \lq forgets{\rq} 
where the point started from. So the dynamics can be essentially modelled by that of a random walk.
Indeed, it was proved in \cite{SS09} that there are orbits which can go to $0$
at a prescribed speed. The image of a set $\{z\in \rz^2; ||z||=1\}$ under the first six iterates of $F$
is drawn in Figure~\ref{fig:spiraling}.

\begin{figure}[htp] \hfil
\beginpicture
\dimen0=0.15cm
\setcoordinatesystem units <\dimen0,\dimen0> point at 20  0 
\setplotarea x from 0 to 20, y from -8 to 8
\plot -6 -3   0 -6   6 -3   9   3   3 6  -3 3 -6 -3 /  
\setdashes
\plot 4 -2 4 2 9 2 9 -2 4 -2 / 
\setsolid 
\plot 4 0 4 2 9 2 9 -2 7 -2 / 
\put {{$Z$}} at 11 2
\put {{$D$}} at 0  1
\put {{$\Gamma$}} at 1.1  6.2
\setcoordinatesystem units <\dimen0,\dimen0> point at -10 0 
\setplotarea x from -20 to 20, y from -10 to 10
\setdashes
\plot -11 0 11 0 /
\plot 0 -10 0 10 /
\setsolid
\plot 10 0 0 9 -8 0 0 -7 6 0 0 5 -4 0 0 -3 2 0 /
\plot 0 0  8 8 /
\put {$l$} at 9 9 
\setcoordinatesystem units <\dimen0,\dimen0> point at -40 0 
\setplotarea x from -20 to 20, y from -10 to 10
\setdashes
\plot -13 0 13 0 /
\plot 0 -12 0 13 /
\setsolid
\plot 10 0 0 10 -10 0 0 -10 10 0 /
\plot 9.1 0 0 9.1 -9.1 0 0 -9.1 9.1 0 /
\plot 8.3 0 0 8.3 -8.3 0 0 -8.3 8.3 0 /
\plot 7.6 0 0 7.6 -7.6 0 0 -7.6 7.6 0 /
\plot 7 0 0 7 -7 0 0 -7 7 0 /
\plot 6.5 0 0 6.5 -6.5 0 0 -6.5 6.5 0 /
\endpicture
\caption{\label{fig:fixedpoints}{\small On the left, the orbit $\Gamma$
with a transversal section $Z$. $\Gamma$ is a hexagon in $\R^4$, 
and is here presented as a genuine hexagon. 
In the middle, the image of a half-ray $l$ through $0$ under the model map $F$ defined in (\ref{eq:modelF}) is drawn. 
A conveniently chosen curve $l$ contains a sequence of fixed points, converging to $0$.
On the right, a nested sequence of annuli $A_n$ in $Z$ is drawn. There exist orbits which visit annuli $A_{n(i)}$
where $n(i)$ can be chosen arbitrarily so that $|n(i+1)-n(i)|\le 1$ and $n(i)\ge 0$.}}
\end{figure}

\begin{figure}[htp] \hfil
\includegraphics[width=6in]{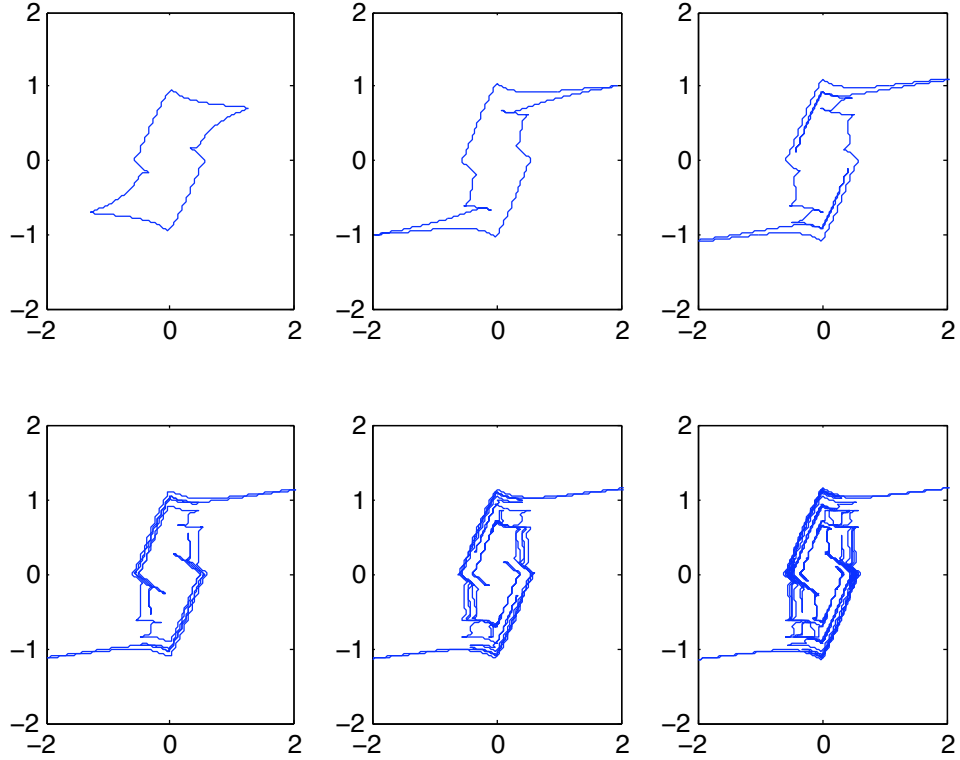}
\caption{\label{fig:spiraling} The image of under the first six iterates under  $x\mapsto M\circ R_{1/||x||}(x)$
of the set $||x||=1$.}
\end{figure}

\subsection{The existence of a  global section}
In the example considered in the previous theorem, $H^{-1}$
is homeomorphic to $S^3$, i.e. to $\R^3\cup \{\infty\}$. It turns out that the 
flow has a global first return section $S$, namely a topological disc spanned by 
the periodic orbit $\Gamma$.

\begin{theorem}[Existence of a global first return section through $\Gamma$]
\label{thmB2}
For the Hamiltonian differential inclusion defined 
in Theorem~\ref{thmB}, there exists a topological disc $S\subset H^{-1}(1)\approx S^3$ whose boundary  is equal to the periodic orbit $\Gamma$
so that the following properties hold: 
\begin{enumerate}
\item The orbit through each point $x\in H^{-1}(1)\setminus \Gamma$ 
intersects the topological disc $S$ infinitely many times (and each of these intersections
is transversal). So $S$ is a global section with a well-defined first return map $R_S$.
\item The first return {\em time} of $z$ to $S$ tends to zero when $z\in S\setminus \Gamma$ tends to $z'\in \Gamma$. 
\item  The first  return map $R_S$ to $S$ has a continuous extension to $\partial S$
and $R_S|\partial S=id$.
\item One can choose $S$ to be the union of four (two-dimensional) 
triangles in $\R^4$ and so that 
$R_S$ is piecewise affine and area preserving.
\item $R_S$ has homoclinic intersections of periodic orbits, and hence horseshoes. 
\end{enumerate}
\end{theorem}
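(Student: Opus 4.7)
The plan is to construct $S$ explicitly from the hexagonal structure of $\Gamma$, verify the return-map properties mostly by local analysis, and upgrade the local-section picture to a global one via a linking argument in $H^{-1}(1)\approx S^3$. Write the six vertices of $\Gamma$ in cyclic order as $v_1,\dots,v_6\in\R^4$; each edge of $\Gamma$ lies in a single affine chamber of $X_H$. I take
$$S:=T_2\cup T_3\cup T_4\cup T_5,\qquad T_i=[v_1,v_i,v_{i+1}],$$
a triangulated hexagonal disc in $\R^4$ with $\partial S=\Gamma$.

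For the transversality part of (1), as well as (4) and the easy parts of (2)--(3), the observation is that $X_H$ is piecewise constant on the finitely many chambers of $H^{-1}(1)$, so checking that its constant value on each chamber meeting $S$ is not parallel to the corresponding affine 2-plane is a finite linear-algebra computation in the data $B$ and $\beta\approx\sigma$. Once transversality holds, piecewise-affineness of $R_S$ is automatic from the piecewise-constancy of $X_H$; area preservation follows because $X_H$ preserves $\omega=\sum a_{ij}\,dp_i\wedge dq_j$ and $R_S$ is the first return map of a Hamiltonian flow to a 2-surface transverse to $X_H$ inside the 3-dimensional level set $H^{-1}(1)$, which induces preservation of the area form obtained by contracting $\omega$ with $X_H$. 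For (2) and the extension in (3), I would use the local model from Section~\ref{sec:randomwalk}: in coordinates where $\Gamma=\{0\}\times\R$ and $\phi_t(z,0)\approx(R_{t/\|z\|}z,\,t)$ for the $\ell^1$-rotation $R$, the section $S$ near $\Gamma$ appears as a finite union of radial wedges, so a point at distance $\|z\|$ returns to $S$ in time $O(\|z\|)\to 0$ with drift along $\Gamma$ also $O(\|z\|)$; hence $R_S$ extends continuously by $R_S|_{\partial S}=\mathrm{id}$.

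The main obstacle is upgrading this to a \emph{global} section, i.e.\ showing every orbit in $H^{-1}(1)\setminus\Gamma$ meets $S$. Since $\Gamma$ bounds the disc $S$ in $H^{-1}(1)\approx S^3$ it is unknotted, $H_1(H^{-1}(1)\setminus\Gamma)\cong\zz$, and the generator is the linking number with $\Gamma$, i.e.\ the algebraic intersection number with $S$. Combined with the local spiraling, this reduces (1) to showing that every orbit accumulates on $\Gamma$. For this I would combine two ingredients: first, the conjugacy $\pi\circ\phi_t$ from Proposition~\ref{prop:projection} between the $X_H$-flow on $H^{-1}(1)$ and the angular component of a best-response-type flow on $\Sigma\times\Sigma\setminus\{(\bp,\bq)\}$ satisfying $\dot H=-H$ by Lemma~\ref{lem:contatNE}; second, a finite case analysis over the chambers adjacent to $\Gamma$ showing that any nonempty closed $X_H$-invariant subset of $H^{-1}(1)$ disjoint from $\Gamma$ would be a union of piecewise-straight segments with a cyclic chamber itinerary incompatible with the transversality of $X_H$ to each $T_i$. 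This case analysis, and in particular the exclusion of exotic invariant continua built from the codimension-one sliding walls, is the hard part of the argument.

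Finally, for (5), I would exploit Theorem~\ref{thmB}: the local return map along $\Gamma$ is modeled by $F(z)=M\circ R_{1/\|z\|}(z)$ with $M=\mathrm{diag}(2,1/2)$, a hyperbolic twist. A periodic random-walk itinerary in the annuli $A_n$ produces a saddle periodic point $p$ of $R_S$ with stable/unstable directions aligned to the eigendirections of $M$; another itinerary, constructed by following the unstable direction through successive $R_{1/\|z\|}$ rotations and then landing back on the stable manifold of $p$ via the random-walk coding, yields a transverse homoclinic intersection, and hence by Smale's theorem a horseshoe inside $R_S$.
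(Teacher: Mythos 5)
The paper itself does not prove Theorem~\ref{thmB2}: in Section~1.5 the author explicitly notes that proofs of the results from Section~\ref{sec:randomwalk}, including this theorem, are deferred to \cite{SS09}, and Section~7.3 merely states the theorem. So there is no in-paper proof for your attempt to be compared against; the assessment below is therefore just of the internal soundness of your sketch.

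Most of your outline is along plausible lines: the triangulated disc $S$ spanned by the hexagon $\Gamma$, transversality verified chamber by chamber, piecewise affineness from piecewise constancy of $X_H$, return time $\to 0$ from the local $\ell^1$-rotation model, and the horseshoe from the random-walk coding of $F(z)=M\circ R_{1/\|z\|}(z)$. A small imprecision: ``the area form obtained by contracting $\omega$ with $X_H$'' is a $1$-form ($\iota_{X_H}\omega=dH$), not a $2$-form; the invariant area on the section comes from contracting the Liouville volume $\omega\wedge\omega/2$ once with $X_H$ to get the invariant $3$-volume on $H^{-1}(1)$, and then taking the flux through transversals. The idea is right, the wording isn't.

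The genuine gap is your reduction of the global section property to the claim that every orbit accumulates on $\Gamma$. That reduction is false, and the paper itself supplies the counterexample: the elliptic periodic orbit $c$ of Section~7.4 (and the entire family of invariant tori filling the two egg-shaped regions) is bounded away from $\Gamma$, so it does not accumulate on $\Gamma$, yet it meets $S$ twice per period as required by part (1). What the topology actually asks for is not accumulation on $\Gamma$ but a uniform positivity of the winding: since $H^{-1}(1)\setminus\Gamma$ is a solid torus with $H_1\cong\zz$ generated by the linking class of $\Gamma$, the disc $S$ is a global section iff every orbit has positive asymptotic linking number with $\Gamma$ (a Schwartzman/Fried-type criterion), not iff it limits onto $\Gamma$. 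The linking argument you set up is the right framework, but the step that must be proved is that the piecewise-constant field $X_H$ crosses every chamber of $H^{-1}(1)$ with a definite sign of angular progress around $\Gamma$, so that between any two crossings of $S$ there is a uniformly bounded transit time. Your ``finite case analysis over the chambers adjacent to $\Gamma$'' is local; what is needed is a global chamber-by-chamber positivity statement, and as written the argument would incorrectly discard exactly the orbits (the elliptic ones) that the theorem most needs to include.
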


Of course, a smooth vector field could never have such a global section.

\subsection{Visualising the dynamics in $S^3$.}
The existence of the global section $S$ makes it rather easy to visualize
the flow. The set $S$ is a union of four triangles in $\R^4$, and 
a projection of this set is drawn in  Figure~\ref{fig:S-triangles}.
As is shown in \cite{SS09} the following holds.
\begin{enumerate}
\item The flow on $H^{-1}(1)\approx S^3=\R^3\cup \{\infty\}$ has
exactly one periodic orbit which meets the section $S$ once.
This periodic orbit is of saddle-type. 
\item  The flow has exactly one periodic orbit  $c$ 
which meets the section $S$ twice.
This periodic orbit $c$ is elliptic and intersects $S$ in the centers of the two {\lq}egg-like{\rq}  regions shown 
in $S$. The {\lq}egg-like{\rq}  regions are filled by 
invariant circles (corresponding to invariant tori for the flow). This can be seen 
as follows: $c \cap D$ consists of two points $p_1,p_2$. 
As mentioned, the first return map $R_S$ to $S$ is a piecewise affine map.
Moreover,  the linearisation of $R_S^2$ at $p_1$
has two complex conjugate eigenvalues on the unit circle (not equal to one),
see \cite{SS09}. It follows that $R_S^2$ is linearly conjugate
to a rotation, and so orbits near $x_0$ lie on ellipses.
\end{enumerate}

As mentioned, in the previous the previous theorem,
$\Gamma$ is a periodic orbit of the flow on the boundary of $S$, and each other orbit  
of the flow in $H^{-1}(1)\approx S^3=\R^3\cup \{\infty\}$ 
transversally intersects the section $S\setminus \Gamma$ infinitely many times.

\subsection{Open problems}
Many questions are still open about this specific system. For example, 
from the above description it follows that
$H^{-1}(1)$ is the union of two fully invariant sets $\mathcal U$ and $\mathcal W$
where $\mathcal U$ consists of a closed solid torus which intersects $S$ in the two {\lq}egg-shaped{\rq}
regions and where $\mathcal W$ is the complement (so $\mathcal W$ is the interior of a solid torus).

\begin{question}
Does there exist an orbit in $\mathcal W$ which is dense in $\mathcal W$? 
\end{question}

If one considers the first return map to $S$ one can ask:

\begin{question}
Consider the first return map $R_s\colon S\to S$.
\begin{enumerate}
\item Are there any other elliptic orbits (of higher period)?
\item Take a (Lebesgue typical) point $x\in \mathcal W$. What can be said
about the limits of 
$\dfrac{1}{n} \sum_{i=0}^{n-1} \delta_{R^i(x)}$? Does it exist? Is it unique? 
Is it absolutely continuous? 
\item Is it possible to find an adequate random-walk description 
of the first return map to a section $\Sigma$? 
\end{enumerate}
\end{question}

It would also be interesting to 
understand the dynamics of the map 
 $F(x)=M\circ R_{1/||x||}(x)$ as in (\ref{eq:modelF}) more thoroughly, where $R_\theta(x)$
 is a rotation and $M$ is a linear map of saddle-type. For example, 
 are typical orbits dense in $\R^2$ for generic matrices $M$?
 For some results on this, see \cite{SS09}. 
\medskip

More general questions are posed in the conclusion,
see Section~\ref{sec:conclusion}.

\begin{figure}[htp] \hfil
\includegraphics[width=3in]{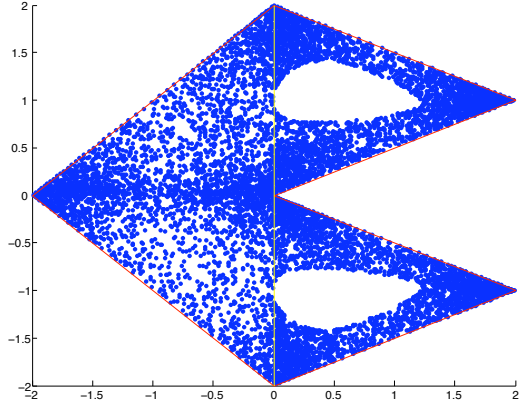}
\caption{\label{fig:S-triangles}A projection of the global section $S$.
Note that $S$ is topological disc whose boundary is a hexagon $\Gamma$
where $\Gamma$ is a periodic orbit of the flow. Orbits spiral around $\Gamma$ 
as in Figure~\ref{fig:coiling}.  Note that we have drawn the hexagon differently
than in Figure~\ref{fig:fixedpoints} (the reason for drawing $S$ in this way is that
it represents better how $S$ is naturally embedded in $H^{-1}(1)=S^3$
as is explained in Section 3 of \cite{SS09}).  We also plot
the orbit of a typical starting point under the first return map to $S$.  There are
two 'egg-shaped' regions which are permuted by the first return map. Orbits within this
region form invariant circles (these 'egg-shaped' regions have period two). At the center
of these regions there is a periodic orbit of period two. Locally, the map
is a rigid rotation over an angle which is determined by the parameter $\beta$.
For $\beta=\sigma$ the rotation angle is an irrational number, so orbits under the first return map
lie on  circles.}
\end{figure}

\bigskip

\section{Application to zero sum games}
\label{sec:games}
In this section we will apply the previous results to 
to fictitious play and best response dynamics defined in game theory
and state some applications of our results which are relevant to game theory.

Let us first give a short introduction into the relevant notions from game theory.
Consider a two-player game where player $P$ has a choice of $m$ actions
and player $Q$ has a choice of $n$ actions for each time $t\in [1,\infty)$.
So let $\Sigma_p\subset \rz^m$ and $\Sigma_q \subset \rz^n$ be the 
space of  probability vectors in $\rz^m$ resp. 
$\rz^n$. 
For each $t\in [0,\infty)$,  each of the players continuously chooses an action 
$$m^P(t)\in \{e_1^p,\dots,e_m^p\}\subset \Sigma_p \mbox{ and } m^Q(t)\in \{e_1^q,\dots,e_n^q\}\subset \Sigma_q.$$
Let 
\begin{equation}
p(t)=\dfrac{1}{t} \int_{s=1}^t m^P(s)\, ds\mbox{ and }
q(t)=\dfrac{1}{t} \int_{s=1}^t m^Q(s)\, ds.\label{eq:meanp}
\end{equation}
Hence $p(t)\in \Sigma_p$ describes the average of the strategies  player $P$
has chosen during the time interval $[1,t)$. Usually $p(t)$ and  $q(t)$ are called the
strategy profiles of players $P$ and $Q$.

In this model, it assumed that player $P$ only observes (or responds to)  $q(t)$ and tries to choose an action which maximizes his payoff. In other words, it is assumed that there are best-response maps
(possibly multivalued)
$\Sigma_q\ni q\mapsto \BR_p(q)\in \Sigma_p$
and $\Sigma_p\ni p\mapsto \BR_p(p)\in \Sigma_q$.
One often makes the assumption that there are $n\times m$ matrices
$P,Q$ so that 
$$\BR_p(q):=\argmax_{p\in \Sigma_p} p'\, P q\mbox{ and }
\BR_q(p):=\argmax_{q\in \Sigma_q} p'\,  Q q.$$
It is then assumed that player $P$ chooses at time $t$ an action $m^P(t)$ for which
\begin{equation}
m^P(t)\in \BR_p(q(t))
\label{bra}
\end{equation}
while $Q$ chooses an action 
\begin{equation}
m^Q(t)\in \BR_q(p(t)).
\label{brb}\end{equation}
Differentiating (\ref{eq:meanp})  immediately leads to 
\begin{equation}
\dfrac{dp}{dt}= \dfrac{1}{t}(m^P(t)\,-p(t)), \quad \dfrac{dq}{dt}\in \dfrac{1}{t}(m^Q(t)-q(t))
\label{eq0}
\end{equation}
and combining this with (\ref{bra}) and (\ref{brb}) gives that
$t\mapsto (p(t),q(t))$ is a solution to the following differential inclusion
\begin{equation}
\dfrac{dp}{dt}\in \dfrac{1}{t}(\BR_p(q)\,-p), \quad \dfrac{dq}{dt}\in \dfrac{1}{t}(\BR_q(p)-q).
\label{eqfp}
\end{equation}
This is called {\em fictitious play dynamics}.
Some authors prefer a different time parametrization (taking $s=e^t$), which
gives
\begin{equation}
\dfrac{dp}{ds}=\BR_A(q)\,-p, \dfrac{dq}{ds}=\BR_B(p)-q.
\label{eqbr}
\end{equation}
and this is called {\em best response dynamics}.
As noted before,   it is not hard to show that 
these differential inclusions have solutions, see \cite{AubinCellina84}).
Best response dynamics is commonly associated with two infinite populations,
so that within each of these two populations the fraction of players choosing
a certain strategy continuously evolves towards best response, see \cite{MR1180001}.
A common interpretation of fictitious play is as a model for rational learning,
see for example Fudenberg and Levine \cite{Fudenberg-Levine98}.

In the present paper we consider the zero-sum case.
In this situation, the players have opposite interests and
so we have  $P=\M$ and $Q=-\M$.

A more detailed explanation of the rationale of this model can be found
in for example  the monograph of Fudenberg and Levine  \cite{Fudenberg-Levine98}. One reason this model is used widely is because it 
is frequently used as a learning model in economic theory.

Restating our previous results gives:

\begin{theorem}\label{thm:zerosumcont}
Assume that we have a $m\times n$ two-player zero-sum game
with $m$ and $n$ not necessarily equal.
Assume that the transversality condition (\ref{eq:trans2}) holds. Then 
\begin{enumerate}
\item the Nash equilibrium $(\bp,\bq)$ is unique;
\item (\ref{eqbr}) and (\ref{eqfp}) define continuous flows
converging to the Nash equilibrium $(\bp,\bq)$;
\item there exist $r\le \min(m,n)$ and $i_1,\dots,i_r\in \{1,\dots,m\}$
and $j_1,\dots,j_r\in \{1,\dots,n\}$ so that
$\bp\in  [ e_{i_1},\dots,e_{i_r}]\subset \Sigma_p$
and $\bq\in  [ e_{j_1},\dots,e_{j_r}] \subset \Sigma_q$
and so that moreover the $i_1,\dots,i_r$-th component  of $\bp$ and 
$j_1,\dots,j_r$-th component of $\bq$ 
are non-zero (in other words, $(\bp,\bq)$ is a completely mixed Nash equilibrium w.r.t.
to this subgame);
\item points starting in $[ e_{i_1},\dots,e_{i_r}]\times  [ e_{j_1},\dots,e_{j_r}]$
stay in this subspace under the flow;
\item the other components of $p,q$  decrease monotonically
with time for any solution of the differential equations (\ref{eqbr}) and (\ref{eqfp}).
\item each half-line through $(\bar p,\bar q)$ has a unique intersection
with $H^{-1}(1)$; if we consider the flow restricted to 
$[ e_{i_1},\dots,e_{i_r}]\times  [ e_{j_1},\dots,e_{j_r}]$
and project this onto $H^{-1}(1)$ we obtain a Hamiltonian flow.
\end{enumerate}
\end{theorem}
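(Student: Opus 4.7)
The plan is to stitch the statement together from the main technical results already proved, using the observation that the best response differential inclusion (\ref{eqbr}) is the particular instance of the general system (\ref{eq:genBR}) with $\alpha=0$, $X=\mathrm{id}_m$, $Y=\mathrm{id}_n$, while fictitious play (\ref{eqfp}) is obtained from (\ref{eqbr}) by the time change $s=e^t$.

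Parts (1) and (3) are literally the content of Proposition~\ref{prop:unique2}: the second transversality assumption implies that the Nash equilibrium is unique and is supported on some $r$-element subsets of rows and columns, restricting to a completely mixed equilibrium of the $r\times r$ subgame. For Part (2), I would verify that the compatibility conditions (\ref{eq:commutexy})--(\ref{eq:level2}) hold trivially when $X,Y$ are identities, and observe (as in the remark following Proposition~\ref{prop:unique2}) that in this case the third transversality assumption (\ref{trans3}) reduces to (\ref{eq:trans2}). Theorem~\ref{thm:trans} then yields existence, uniqueness and continuity of the flow on $\Sigma_p\times \Sigma_q\setminus\{(\bp,\bq)\}$. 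Lemma~\ref{lem:contatNE} furnishes continuity at $(\bp,\bq)$ and the decay $H(p(t),q(t))=H(p(0),q(0))\,e^{-t}$; since the zero set of $H$ is $\{(\bp,\bq)\}$, the flow converges to the Nash equilibrium. The time reparametrization then transfers these conclusions to (\ref{eqfp}).

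For Parts (4) and (5), the key point is that on the support subspace $[e_{i_1},\dots,e_{i_r}]\times [e_{j_1},\dots,e_{j_r}]$ the full-game best response agrees with the subgame best response. Concretely, because $\M\bq=\mu\ii_p$ on the rows $i_1,\dots,i_r$ and $<\mu$ strictly on the other rows (combined with transversality (\ref{eq:trans2}) guaranteeing distinct components off the support), an indifference argument as in the proof of Proposition~\ref{prop:unique2} shows that for every $q\in[e_{j_1},\dots,e_{j_r}]$ one has $\BR_p(q)\subset [e_{i_1},\dots,e_{i_r}]$, and symmetrically for $\BR_q$; invariance of the subspace under $\dot p=\BR_p(q)-p$, $\dot q=\BR_q(p)-q$ is immediate. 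For an index $i\notin\{i_1,\dots,i_r\}$ the same argument gives $\BR_p(q)_i=0$ along any orbit, so $\dot p_i=-p_i$, which gives the desired monotone exponential decay (and analogously for $q$). Part (6) is then a direct application of Proposition~\ref{prop:projection} together with the first remark after it: restricted to the invariant subspace, the projection $\pi$ of (\ref{eq:pi}) conjugates the best response flow to the Hamiltonian flow on $H^{-1}(1)$ for the symplectic form $\sum a_{ij}\,dp_i\wedge dq_j$ with $A=\M$.

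The main obstacle in this plan is the coincidence of full-game and subgame best responses in Part (4): a trajectory that starts inside the sub-simplex could a priori reach a point where some row outside $\{i_1,\dots,i_r\}$ becomes an alternative best response, in which case the vector field would leave the subspace. Ruling this out is where the strict ordering of components off the support (enforced by transversality (\ref{eq:trans2})) must be combined with the fact that within $[e_{j_1},\dots,e_{j_r}]$ the convex combinations of the columns $j_1,\dots,j_r$ cannot attain a larger value at an off-support row than at one of the support rows — an assertion that ultimately rests on the Nash property of $(\bp,\bq)$ in the full game and the uniqueness of the completely mixed equilibrium for the $r\times r$ block.
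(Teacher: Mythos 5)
For parts (1)--(3) and (6) your reduction is correct and is essentially what the paper does (the paper gives no separate argument, it simply writes ``Restating our previous results gives:''). You correctly identify (\ref{eqbr}) as the instance $\alpha=0$, $X=\mathrm{id}$, $Y=\mathrm{id}$ of (\ref{eq:genBR}), note that conditions (\ref{eq:commutexy})--(\ref{eq:level2}) hold trivially, recall that the third transversality assumption then reduces to (\ref{eq:trans2}), and appeal to Theorem~\ref{thm:trans} and Lemma~\ref{lem:contatNE} for existence, uniqueness, continuity, and convergence. One small point: you invoke Proposition~\ref{prop:unique2}, which assumes both transversality conditions (\ref{eq:trans1}) and (\ref{eq:trans2}), whereas the theorem as stated only imposes (\ref{eq:trans2}); this needs to be flagged.

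The genuine gap is exactly where you say it is, in parts (4)--(5), but the obstacle is worse than you suggest: the assertion you hope to establish --- that $\BR_p(q)\subset[e^p_{i_1},\dots,e^p_{i_r}]$ for \emph{every} $q\in[e^q_{j_1},\dots,e^q_{j_r}]$ --- does not follow from the Nash property of $(\bp,\bq)$ and transversality, and in fact can fail. The Nash property constrains $\M\bq$ and $\bp'\M$ only \emph{at} the equilibrium; once $q$ moves to a vertex $e^q_{j_1}$ of the supporting face, $\M q$ is simply the $j_1$-th column of $\M$, whose largest entry need not lie among the rows $i_1,\dots,i_r$. For instance, take a $3\times 3$ zero-sum game with a matrix close to the one having rows $(1,-1,\tfrac12)$, $(-1,1,\tfrac35)$, $(2,-3,1)$: after a small generic perturbation the transversality conditions hold, the unique Nash equilibrium is completely mixed on the $2\times2$ top-left block (so $r=2$, $\{i_1,i_2\}=\{j_1,j_2\}=\{1,2\}$), yet $\M e^q_1\approx(1,-1,2)'$ gives $\BR_p(e^q_1)=e^p_3$. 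The orbit of (\ref{eqbr}) through $(e^p_1,e^q_1)$ therefore satisfies $\dot p_3=1>0$ and leaves $[e^p_1,e^p_2]\times[e^q_1,e^q_2]$ immediately, with $p_3$ \emph{increasing} rather than decreasing. So your sketch of parts (4) and (5) cannot be completed as written; those statements require either a restriction to a neighbourhood of $(\bp,\bq)$ (where, modulo the indifference sets, the best responses coincide with those at $\bq$ and $\bp$) or a reinterpretation in terms of the subgame dynamics $\BR_p^{i_1,\dots,i_r}$, $\BR_q^{j_1,\dots,j_r}$ rather than the full best-response map.
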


That solutions converge to the Nash equilibrium was already proved 
in the 1950's by Robinson~\cite{Robinson51}. According to the previous theorem, 
generically there is  precisely one Nash equilibrium and this is a completely
mixed Nash equilibrium for some $r\times r$ subgame (which is an invariant subset
of the dynamics). 

\bigskip

In the trivial case of a $2\times 2$ zero-sum 
game, orbits spiral towards the Nash equilibrium. If we take $H$ as
before, then we obtain an induced Hamiltonian flow
on $H^{-1}(c)$, $c>0$. It turns out that $H^{-1}$ is  the boundary of 
a quadrilateral and the Hamiltonian flow moves either clockwise or counter clockwise on this boundary,
see Figure~\ref{fig:zerosum}.

In the higher dimensional case of a $k\times k$ game
with $k\ge 3$, typical orbits will be more complicated. 
Indeed, since a Hamiltonian  flow preserves volume, it follows 
from the Poincar\'e recurrence theorem that Lebesgue almost every point returns to 
a neighbourhood of the initial point, see \cite{MR648108}.
In the example we presented, for typical starting points, players switch strategies  erratically.

The examples considered in Theorem~\ref{thmB}  are relevant to a result
of  Krishna and Sj\"ostr\"om (see \cite{Krishna-Sjostrom98}). Their result deals with orbits with cyclic play. 
More precisely,  one has cyclic play  along the orbit $t\mapsto (p(t),q(t))$ if the values of 
$t\mapsto (\BR_p(q(t)),\BR_q(p(t))$ are changing periodically with some period $s\in \{1,2,\dots,\}$, i.e.,   if
there exists times $t_0<t_1<t_2<t_3<t_4<\dots$ so that for all $i\ge 0$ and all $t\in (t_i,t_{i+1})$,  \, 
$(\BR_p(q(t)),\BR_q(p(t))$  is equal to some corner point in $V_i\in \Sigma_p\times \Sigma_q$
and so that  
$V_{i+s}=V_i$  for all $i\ge 0$. In other words, both players repeat strategies every $s$-th step.
The  theorem of Krishna and Sjostrom states that for generic games, it is impossible for an open set of initial conditions
to all converge to the Nash equilibrium with the same cyclic play. 
%
%
%
%
%
The next corollary shows that their theorem does NOT hold for non-generic games
(in particular not for zero sum games):

\begin{corollary}
There exists an open set of matrices, 
so that the corresponding zero-sum games have the following property:
for an open set of initial conditions one has
convergence  to the Nash equilibrium with cyclic play. 
\end{corollary}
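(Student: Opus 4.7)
The plan is to use the elliptic periodic orbit $c$ of the Hamiltonian flow on $H^{-1}(1)$ produced in Theorem~\ref{thmB2}, together with the foliation by $R_S^2$-invariant ellipses that fills the "egg-shaped" regions around $c\cap S$, and then to transport the cyclic pattern back to the best response dynamics through the projection $\pi$.

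First I would observe that the ordered list of best-response corners $(\BR_p(q(t)),\BR_q(p(t)))$ visited by a BR orbit equals the list visited by its image under $\pi$. This rests on two facts: (i) by Lemma~\ref{lem:AM} with $A=\M$, the BR dynamics (\ref{eqbr}) is exactly the differential inclusion (case $X=Y=\text{id}$, $\alpha=0$) that Proposition~\ref{prop:projection} projects to the Hamiltonian flow; and (ii) each best-response region in $\Sigma_p\times\Sigma_q$ is a cone with apex at $(\bp,\bq)$, since $\bp'\M=\lambda\ii'$ and $\M\bq=\mu\ii$ force $\BR_q(p)$ to depend only on the direction of $p-\bp$, and symmetrically for $\BR_p(q)$. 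Radial projection along rays through $(\bp,\bq)$ therefore preserves the entire sequence of corners.

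Next I would show that every flow orbit in a small tube $N'$ around $c$ in $H^{-1}(1)$ visits the same finite ordered list $V_0,\dots,V_{s-1}$ of best-response corners as $c$. The orbit $c$ crosses each best-response hyperplane transversally by assumption (\ref{eq:trans2}), and continuity of the piecewise-affine first return map on the section $S$ then guarantees that any orbit starting sufficiently close to $c\cap S$ executes the same ordered list of transversal crossings during one return to a small disc $D\subset S$ around $c\cap S$. Because $D$ is genuinely foliated by $R_S^2$-invariant ellipses (piecewise linearity makes the rotation on each ellipse exact rather than merely first-order), iteration of $R_S^2$ keeps such an orbit inside $D$, so the combinatorial pattern $V_0,\dots,V_{s-1}$ repeats indefinitely. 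Lifting by $\pi^{-1}$, the open cone $W:=\pi^{-1}(N'\cap H^{-1}(1))\subset \Sigma_p\times\Sigma_q$ consists of initial conditions whose BR orbits exhibit cyclic play of period $s$, and Theorem~\ref{thm:zerosumcont} gives convergence of each such orbit to $(\bp,\bq)$.

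Openness in $\M$ follows because every hypothesis used in the construction is open: the transversality condition (\ref{eq:trans2}), the existence and transverse-crossing structure of $c$, and the trace condition keeping the eigenvalues of the linearization of $R_S^2$ at $c\cap S$ on the unit circle all persist under small perturbations of $\M$. The main potential obstacle is the persistence of the invariant elliptic foliation under such perturbations; in the smooth setting this would require KAM, but here $R_S^2$ is genuinely piecewise affine near $c\cap S$, so the invariance of the ellipses is exact and survives any perturbation of $\M$ for which the trace of the linearization stays in $(-2,2)$. This supplies the desired open set of matrices, completing the proof.
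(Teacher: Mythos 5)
Your proposal is correct and takes essentially the same approach as the paper: both rest on the elliptic period-two orbit of the first return map to $S$ (the Shapley orbit), the fact that the piecewise-affine map $R_S^2$ is an exact rotation near its center so that nearby orbits lie on invariant ellipses and therefore hit all indifference sets transversally in the same cyclic order, and persistence under perturbation of $\M$ via area-preservation forcing the eigenvalues to remain on the unit circle. You additionally spell out, via Lemma~4.3 and Proposition~5.1 and the observation that the best-response regions are cones with apex at $(\bp,\bq)$, why the cyclic pattern of the projected flow on $H^{-1}(1)$ lifts verbatim to the original best-response orbits; the paper takes this step for granted, so your extra explanation is a welcome clarification rather than a different argument.
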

\begin{proof}
Consider the zero-sum game associated to the matrix (\ref{eq:example})
and take an orbit corresponding to an initial condition
which lies in one of the two egg-shaped region  in Figure~\ref{fig:S-triangles}.
Here the region bounded by the six sides forms the global section $S$ from 
Theorem~\ref{thmB2} associated to the flow induced on $H^{-1}(1)$ and the {\lq}centres{\rq} 
of the two egg-shaped regions in Figure~\ref{fig:S-triangles}
correspond to a periodic two orbit $\gamma$ of the first return map to $S$.
Let $D\cap \gamma=\{x_0,x_1\}$ and let $F$ be the second iterate
of the first return map to $S$. Then $F(x_0)=x_0$ and it is shown in \cite{SSH08} and \cite{SS09} that the 
linearisation of $F$ at $x_0$  has eigenvalues $\lambda,\bar \lambda$ with $|\lambda|=1$.
This orbit corresponds to the Shapley orbit in \cite{SSH08} and \cite{SS09}
and as was shown there,  the periodic orbit $\gamma$ crosses 
the indifference sets exactly 6 times (each time transversally). 
Now consider an orbit $\tilde \gamma$
near $\gamma$.
%
%
%
%
%
%
%
Since the first return map $R_S$ to $S$ is a piecewise translation, 
near $x_0$ the second iterate $R_s^2$ is a  (smooth) rotation. Here we use that
$\gamma$ meets all indifference sets (sets where $\BR_p$ or $\BR_q$ are multivalued) 
transversally. It follows that $\tilde \gamma\cap D$ lies on two circles around $x_0$ resp. $x_1$
(provided $\tilde \gamma$ is sufficiently close to $\gamma$). 
It also follows that $\tilde \gamma$ hits all the indifference sets transversally and in the same order as
$\gamma$. (But of course the times at which $\tilde \gamma$ hits the indifference planes need not necessarily be periodic.) In any case, all orbits $\tilde \gamma$ 
sufficiently near $\gamma$ have periodic play. 

All this also holds for all zero-sum games near the one considered
in Theorem~\ref{thmB}. Indeed, the eigenvalues of the linearization of $R_S^2$ at $x_0$
both lie on the unit circle.  Since these eigenvalues are not equal to one, they are complex conjugate.
Hence for a nearby game the corresponding map $\tilde R_S^2$ also has a fixed point at a point $\tilde x_0$ 
close to $x_0$, and the linearization of $\tilde R_S^2$ at $\tilde x_0$ also has two eigenvalues 
which are complex conjugate. Since $\tilde R_S^2$ is area preserving, the two eigenvalues again
lie on the unit circle. 
\end{proof}

\begin{figure}[htp]
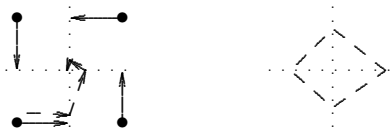
 \hfil
\beginpicture
\dimen0=0.2cm \dimen1=0.07cm
\setcoordinatesystem units <\dimen1,\dimen1> point at 0 0
\setplotarea x from -10 to 10, y from -10 to 10
\setlinear \setdots
\plot -12 0 12 0 /
\plot 0 12 0 -12 / \setsolid
\put {$\bullet$} at 10 10
\put {$\bullet$} at -10 10
\put {$\bullet$} at -10 -10
\put {$\bullet$} at 10 -10
\arrow <5pt> [0.2,0.4] from 10 10 to 0.5 10
\arrow <5pt> [0.2,0.4] from -10 10 to -10 0.5
\arrow <5pt> [0.2,0.4] from -10 -10 to -0.5 -10
\arrow <5pt> [0.2,0.4] from 10 -10 to 10 -0.5
\setdashes
\arrow <5pt> [0.2,0.4] from -8 -8 to 0 -8.5
\arrow <5pt> [0.2,0.4] from 0 -8.5 to 3 0
\arrow <5pt> [0.2,0.4] from 3 0 to 0 2
\arrow <5pt> [0.2,0.4] from 0 2 to -0.5 0
\setcoordinatesystem units <\dimen1,\dimen1> point at -50 0
\setplotarea x from -10 to 10, y from -10 to 10
\setlinear \setdots
\plot -12 0 12 0 /
\plot 0 12 0 -12 / \setsolid
\setdashes
\plot 10 0 0 8 -8 0 0 -7 10 0 /
\endpicture
\caption{\label{fig:zerosum}
{\small A $2\times 2$ zero-sum game converges to the Nash equilibrium. The level set $H^{-1}(c)$
is a quadrilateral and the Hamiltonian flow moves periodically on this.}}
\end{figure}

\subsection{Non-zero sum games}
Even in the case when the players are involved in a general sum game, the above
differential inclusion (\ref{eqbr}) makes sense. There are many papers which show that one has 
convergence to the equilibrium in particular situations:  
for  games where one or both of the players have only 2 strategies to choose from,
see \cite{Miyasawa61} and  \cite{Metrick-Polak94}
for the $2\times 2$ case;  \cite{Sela00} for the $2\times 3$ case; and
\cite{Berger05} for the general $2\times n$  case.
A $2\times 2 \times 2$ fictitious game with a stable limit cycle was constructed in \cite{Jordan93}.  
The $3\times 3$ example studied in this paper,  shows that the situation is far more complicated in 
general, see \cite{SSH08} and \cite{SS09}. 
Theorem~\ref{thmB} summarises some of the results from these last two papers.

\section{Conclusion}
\label{sec:conclusion}

In this paper we saw that the Hamiltonian function $H$ defined
as in  (\ref{eq:hamilton}) generates a Hamiltonian vector field
(which is piecewise constant) with a continuous translation flow. 
We also described an example, showing that the dynamics
of such systems can be surprisingly rich.
Even though
many questions remain about the case considered, it
may be a good idea to consider more elementary
questions in general. For example, the following question is very natural,
and seems wide open.

\begin{question}
Assume that $H$ is as in (\ref{eq:hamilton}) where $\M$ satisfies
all the transversality conditions. 
\begin{enumerate}
\item Does the flow necessarily have periodic orbits?
\item Are there always infinitely many periodic orbits?
\item Can the set of periods of periodic orbits be discrete?
\item Are orbits dense in certain open sets?
\end{enumerate}
\end{question}

\bigskip
In the vain of the Palis conjecture, see \cite{MR2399817}, one can also ask questions about the
statistical behaviour of orbits:

\begin{question}[A version of the Palis conjecture]
Assume that $H$ is as in (\ref{eq:hamilton}) where $\M$ satisfies
all the transversality conditions. 
\begin{enumerate}
\item Can these systems be ever structurally stable (within the
class of systems under consideration)?
\item What are the possible physical measures for these systems? 
\end{enumerate}
\end{question}

\bibliographystyle{plain}
\bibliography{../games_bib}

\end{document}